\newcommand\blfootnote[1]{%
  \begingroup
  \renewcommand\thefootnote{}\footnote{#1}%
  \addtocounter{footnote}{-1}%
  \endgroup
}
\renewcommand{\phi}{\varphi}
\renewcommand{\d }{{\rm d} }
\renewcommand{\dh}{\widehat \d}
\newcommand{\dl }{\dh_{H_i} }
\newcommand{\h}{\hookrightarrow_h}
\newcommand{\G }{\Gamma (G, \mathcal A)}
\newcommand{\Hl }{\{ H_i \} _{i\in I } }
\newcommand{\e }{\varepsilon }
\renewcommand{\kappa }{\varkappa}
\newcommand{\lab}{{\bf Lab}}
\renewcommand{\ll }{\langle\hspace{-.7mm}\langle }
\newcommand{\rr }{\rangle\hspace{-.7mm}\rangle }
\newcommand{\NN}{\mathbb N}
\newcommand{\ZZ}{\mathbb Z}
\newcommand{\di}{\widehat{\d}_{H_i}}
\newcommand{\da }{\d_{\mathcal A}\, }
\newcommand{\vk}{\varkappa}
\newcommand{\Ker}{\operatorname{Ker}}
\newtheorem{thm}{Theorem}[section]
\newtheorem*{thm*}{Theorem}
\newtheorem{cor}[thm]{Corollary}
\newtheorem{lem}[thm]{Lemma}
\newtheorem{prop}[thm]{Proposition}
\newtheorem{prob}[thm]{Question}
\theoremstyle{definition}
\newtheorem{defn}[thm]{Definition}
\newtheorem{conv}[thm]{Convention}
\newtheorem{ex}[thm]{Example}
\theoremstyle{remark}
\newtheorem{rem}[thm]{Remark}
\begin{document}

\title{Small cancellation and outer automorphisms of Kazhdan groups acting on hyperbolic spaces}

\author{I. Chifan, A. Ioana, D. Osin, B. Sun}

\date{}

\maketitle

\begin{abstract}\blfootnote{\textbf{MSC:} 20F65, 20F67}
We show that every finite group realizes as the outer automorphism group of a hyperbolic group with Kazhdan property (T) and trivial finite radical. This result complements the well-known theorem of Paulin stating that the outer automorphism group of every hyperbolic group with property (T) is finite. We also show that, for every countable group $Q$, there exists an acylindrically hyperbolic group $G$ with property (T) such that $Out(G)\cong Q$. The proofs employ strengthened versions of some previously known results in small cancellation theory.
\end{abstract}

\tableofcontents

\section{Introduction}
It has long been known that the combination of hyperbolic geometry and property (T) implies rigidity of the algebraic structure of the group. The ultimate manifestation of this phenomenon is the following theorem.

\begin{thm}[Paulin, \cite{Pau}]
The outer automorphism group of a hyperbolic group with property (T) is finite.
\end{thm}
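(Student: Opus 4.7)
The plan is to argue by contradiction: assume $G$ is a hyperbolic group with property (T) and $Out(G)$ is infinite, then construct a nontrivial isometric action of $G$ on an $\mathbb R$-tree, which property (T) forbids.

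First I would fix a finite generating set $S$ of $G$ with Cayley graph $X$ and, for each outer automorphism class, pick a representative $\alpha_n\in Aut(G)$ whose displacement $\lambda_n=\inf_{x\in X}\max_{s\in S} d(x,\alpha_n(s)\cdot x)$ is minimized within its class (replacing $\alpha_n$ by a conjugate by an inner automorphism does not change the class but translates the minimizing basepoint). The first key observation is that $\lambda_n\to\infty$ along any sequence of pairwise distinct outer classes. Indeed, if $\lambda_n\leq C$ for infinitely many $n$, then after centering at a near-optimal basepoint, the images $\alpha_n(s)$ lie in a ball of uniformly bounded radius; since that ball is finite and an automorphism is determined by its values on $S$, this leaves only finitely many possibilities for $\alpha_n$, a contradiction.

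Next I would run the Bestvina--Paulin rescaling argument: choose basepoints $x_n\in X$ approximately realizing the infimum, and consider $G$ acting on the rescaled pointed metric space $(X,d/\lambda_n,x_n)$ via $\alpha_n$. By Gromov's compactness theorem, a subsequence converges in the equivariant pointed Gromov--Hausdorff sense to a pointed $G$-space $(T,d_\infty,x_\infty)$. Since $X$ is $\delta$-hyperbolic and the rescaling sends $\delta/\lambda_n\to 0$, the limit $T$ is $0$-hyperbolic, i.e., an $\mathbb R$-tree. The normalization of $\lambda_n$ ensures that no point of $T$ is fixed by all of $S$, so the $G$-action on $T$ is nontrivial. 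One also checks that the action has no global fixed point: otherwise the displacement at the fixed point would be $0$, contradicting the definition of $\lambda_n$ as an infimum together with the limiting process.

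Finally, I would invoke the $\mathbb R$-tree version of property FA for Kazhdan groups: any isometric action of a group with property (T) on a complete $\mathbb R$-tree has a global fixed point. This contradicts the previous paragraph and completes the argument. The main technical obstacle is the Bestvina--Paulin step, namely choosing basepoints correctly, verifying the $0$-hyperbolicity of the limit, and ensuring that the limit action is well defined and fixed-point-free; everything else is essentially a packaging of property (T).
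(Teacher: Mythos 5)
The paper does not reprove this theorem; it cites it from Paulin and offers only the informal outline (precompose the Cayley-graph action with representatives of infinitely many outer classes, rescale to get an $\mathbb R$-tree action with no global fixed point, and contradict the fixed-point property of Kazhdan groups on $\mathbb R$-trees via \cite{HV}). Your sketch reproduces exactly this Bestvina--Paulin argument and is correct at the level of detail given.
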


Informally speaking, Paulin shows that every group $G$ admitting infinitely many ``sufficiently distinct" isometric actions on a hyperbolic space also acts on an $\mathbb R$-tree without globally fixed points. For a hyperbolic group $G$, these ``sufficiently distinct" actions can be constructed by precomposing the natural action of $G$ on its Cayley graph with elements of $Out(G)$. The relevance of property (T) in this context stems from the observation that every action of a Kazhdan group on an $\mathbb R$-tree has a fixed point (see \cite{HV}).

Recall that every hyperbolic group $G$ contains a unique maximal finite normal subgroup denoted by $K(G)$ and called the \emph{finite radical} of $G$ (see, for example, \cite{Ols93}). It is natural to ask the following question reminiscent of the inverse Galois problem.

\begin{prob}\label{Galois}
Which finite groups can be realized as $Out(G)$ for a hyperbolic group $G$ with property (T) and trivial finite radical?
\end{prob}

Without the condition $K(G)=\{ 1\}$, Question \ref{Galois} admits an easy answer since every finite group can be realised as the group of outer automorphisms of another finite group \cite{Cor}. One can also consider direct products of the form $G=K\times H$, where $K$ is an appropriate finite group and $H$ is a non-trivial, torsion-free, hyperbolic group with property (T) such that $Out(H)=\{ 1\}$. Obviously, we have $Out(G)\cong Out (K)$ in this case. However, these examples are not satisfactory since the outer automorphisms of such a group $G$ do not reflect the symmetries of its ``truly hyperbolic" part.

We give a complete answer to Question \ref{Galois} by proving the following.

\begin{thm}\label{app1}
For every finite group $Q$, there exists a hyperbolic group $G$ with property (T) and trivial finite radical such that $Out(G)\cong Q$.
\end{thm}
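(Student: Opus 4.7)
The strategy is to build $G$ as a small cancellation quotient of a suitable hyperbolic Kazhdan group on which $Q$ acts, choosing the relators so that the quotient has exactly $Inn(G) \rtimes Q$ as its full automorphism group (and hence $Q$ as its outer automorphism group).

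\emph{Starting point.} I would first produce a hyperbolic group $H$ with property (T), trivial finite radical, equipped with a faithful action $Q \to Aut(H)$ whose composition with $Aut(H) \to Out(H)$ remains injective. Natural candidates for $H$ include uniform lattices in rank-one Lie groups with (T) (such as $Sp(n,1)$) combined with a $Q$-equivariant free or amalgamated product construction (for example, $|Q|$ copies of some common Kazhdan hyperbolic piece, permuted by $Q$), or a direct Olshanskii-style construction of a Kazhdan hyperbolic group with prescribed $Q$-symmetries. Since $Q$ is finite, the semidirect product $E := H \rtimes Q$ is again hyperbolic with property (T), and $Q$ sits inside $E$ as a complement to the characteristic subgroup $H$.

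\emph{Rigidification via small cancellation.} The heart of the argument is to pass from $E$ to a quotient which kills every outer automorphism class outside the image of $Q$. The key ingredient is the strengthened Olshanskii-type small cancellation theory mentioned in the abstract: given a non-elementary hyperbolic group $E'$ with (T), trivial finite radical, and an outer automorphism class $\phi \in Out(E') \setminus Q$, one can find a $Q$-invariant family of relators $R$ satisfying a strong small cancellation condition such that the quotient $E'/\ll R\rr$ (i) is again hyperbolic, has (T), and has trivial finite radical; (ii) still embeds $Q$ faithfully into its outer automorphism group; and (iii) no longer admits any lift of $\phi$ as an outer automorphism, essentially because the new relators equate $\tilde\phi(w)$ with a conjugate of $w$ for a generic loxodromic element $w$, forcing $\phi$ to act as an inner automorphism on a large subgroup and then on the whole group. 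Iterating this construction along a countable enumeration of the ``bad'' classes $\phi_1,\phi_2,\ldots$, and passing to the direct limit $G=\varinjlim E_n$, yields the required group.

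\emph{Main obstacle.} The hardest part is the simultaneous preservation, at every stage and in the limit, of property (T), hyperbolicity, trivial finite radical, and the injection $Q \hookrightarrow Out$, while still killing all remaining unwanted outer automorphisms. Property (T) descends easily to quotients, but hyperbolicity and --- especially --- the condition $K(G) = \{1\}$ are fragile under small cancellation. Preserving the trivial finite radical is precisely what separates Theorem \ref{app1} from the ``easy'' direct-product construction sketched in the introduction, and it is almost certainly the crux of the strengthened small cancellation theorems the paper announces. A secondary subtlety is the $Q$-equivariance of the relators: they must be generic enough to kill any given non-$Q$ outer class yet symmetric enough under $Q$ that $Q$ continues to inject into $Out$ in the limit; one must also verify that the iterative procedure does not create new unwanted outer automorphism classes at later stages, or that those classes are absorbed in subsequent steps.
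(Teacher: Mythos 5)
Your proposal takes a genuinely different route from the paper, and the route as sketched has a gap that is fatal precisely for the finite-$Q$ case of Theorem \ref{app1}.

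The paper does \emph{not} realize $Q$ as a subgroup of some $H\rtimes Q$ and then iterate small cancellation against a countable list of ``bad'' outer classes. Instead, it builds a short exact sequence $1\to N\to G\to Q\to 1$ via a modified Rips construction (Lemma \ref{Lem:Rips} and Lemma \ref{Thm:HypAut}), in which two hyperbolically embedded ``marker'' subgroups $H_1\cong\ZZ_2\times\ZZ=\langle s,x\rangle$ and $H_2\cong\ZZ_3\times\ZZ=\langle t,y\rangle$ are planted inside $N$, with $H_1\cap H_2=\{1\}$, $H_1=C_G(s)$, $H_2=C_G(t)$, $N=\langle x^2,y^3\rangle$, and with the property that all finite-order elements of $N$ are conjugate to $1$, $s$, $t$, or $t^2$. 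Then one imposes a \emph{single} further relator $R$ of the $W(\xi,\sigma)$ type, and the strengthened Greendlinger-type Proposition \ref{Prop:GL}, together with the centralizer-preservation Lemma \ref{Prop:HO}, forces every automorphism of the quotient $B$ of $N$ to fix the conjugacy classes of $s$ and $t$, hence to preserve $H_1$ and a conjugate of $H_2$, hence to send $a=x^2$, $b=y^3$ to $a^{\pm1}$, $zb^{\pm1}z^{-1}$, and then a careful reading of the relator $R$ forces $z=1$ and the signs to be $+1$; combined with Lemma \ref{Lem:Q->Out} this yields $Aut(B)\cong\overline G$ and $Out(B)\cong Q$. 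The finitely presented case (in particular finite $Q$) is handled because only finitely many relators are ever added, so $\overline G$ and hence $Aut(B)$ stay hyperbolic, and then $Inn(B)\cong B$ has finite index in $Aut(B)$, giving hyperbolicity of $B$. Trivial finite radical is automatic from $\{H_1,H_2\}\h\overline G$ with $H_1\cap H_2=\{1\}$ via the ICC criterion of Corollary \ref{Cor:ICC}.

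The gap in your sketch: passing to the direct limit $G=\varinjlim E_n$ of infinitely many small-cancellation quotients does not preserve hyperbolicity (a non-elementary direct limit of hyperbolic groups that is not eventually stable is never finitely presented, hence never hyperbolic). Since for a general finite $Q$ there is no reason your enumeration of bad outer classes should terminate after finitely many steps, the limiting group would fail precisely the hyperbolicity conclusion of Theorem \ref{app1}. A secondary issue is the one you yourself flag --- that killing one outer class may create others --- which your plan never resolves; the paper's marker-and-centralizer mechanism sidesteps enumeration entirely by making \emph{all} automorphisms of $B$ inner in $\overline G$ after a single carefully chosen relator, without ever listing outer classes. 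Finally, the upstream step you need (a faithful $Q\to Aut(H)$ landing faithfully in $Out(H)$ for a hyperbolic Kazhdan $H$) is itself nontrivial and is avoided in the paper by realizing $Q$ as $\overline G/B$ rather than as a subgroup of $\overline G$.
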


Another natural question prompted by Paulin's theorem and its generalizations to relatively hyperbolic groups \cite{BS,DS} is whether the existence of a ``nice" action of a group $G$ on a hyperbolic space imposes any restrictions on $Out(G)$. We address this question for the class of acylindrically hyperbolic groups, which was introduced in \cite{Osi16} and received considerable attention in recent years (see \cite{Osi18} and references therein). Informally, a group $G$ is \emph{acylindrically hyperbolic} if it admits a non-elementary action on a hyperbolic space $S$ such that the induced action of $G$ on $S\times S$ satisfies a certain properness assumption. For the precise definition, we refer the reader to Section \ref{Sec:AH}.

\begin{thm}\label{app2}
For every countable group $Q$, there exists a finitely generated acylindrically hyperbolic group $G$ with property (T) such that $Out(G)\cong Q$.
\end{thm}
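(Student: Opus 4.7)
The plan is to realize $Q$ as $Out(G)$ by adapting the Bumagin--Wise paradigm for realizing countable groups as outer automorphism groups, carried out inside the category of acylindrically hyperbolic Kazhdan groups by means of the strengthened small cancellation theorems developed in the body of the paper.

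First, I would choose a suitable starting group $H_0$: a non-elementary torsion-free hyperbolic group with property (T), trivial finite radical, and a distinguished loxodromic ``receiver'' subgroup that can absorb the action of $Q$. The existence of such $H_0$ is essentially the input used in the proof of Theorem \ref{app1}. Next, I would build an intermediate countably generated group $L$ --- for instance by taking a free product of $H_0$ with $Q$, or an HNN extension of $H_0$ indexed by a generating set of $Q$, engineered so that each $q\in Q$ is encoded as a candidate outer automorphism via conjugation by an element of $L\setminus H_0$. At this stage $L$ generally has neither property (T) nor acylindrical hyperbolicity of the desired form.

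Then I would apply the paper's main small cancellation theorem, in the version allowing an \emph{infinite} countable family of relators $\mathcal R=\{r_n\}_{n\in\NN}$, to obtain a quotient $G$ of $L$ which is (a) finitely generated, (b) acylindrically hyperbolic, (c) Kazhdan, and (d) such that the map $Q\to Out(G)$ induced by the encoding is an isomorphism. The $r_n$ are chosen simultaneously to collapse $L$ onto a two-generator image, to enforce identifications forcing the $Q$-action to descend, and to kill off every ``accidental'' outer automorphism coming from the combinatorics of $L$; the small cancellation condition then delivers both acylindrical hyperbolicity and, crucially, preservation of~(T).

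Finally, I would verify that the map $Q\to Out(G)$ is bijective. Injectivity comes from the small cancellation conditions, which prevent any element encoding a nontrivial $q\in Q$ from becoming inner or trivial in $G$. Surjectivity relies on a Paulin-style rigidity argument on the hyperbolic space witnessing acylindrical hyperbolicity: any $\varphi\in Aut(G)$ permutes the canonical family of loxodromic conjugacy classes coming from the image of $H_0$, and the rigidity of $H_0$ (its trivial outer automorphism group modulo the $Q$-symmetries arranged in the construction) forces $\varphi$ to agree, modulo $Inn(G)$, with conjugation by an element representing some $q\in Q$.

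The principal obstacle is step (c). In Theorem \ref{app1} one adds finitely many relators and preservation of (T) follows from standard Żuk/Shalom-type spectral input. Here one must maintain uniform spectral control while quotienting by an infinite family $\mathcal R$, which is delicate because the Kazhdan constant can easily degenerate to zero in the limit. Overcoming this is precisely what the paper's strengthened small cancellation machinery is designed for, and that is where the real work of the proof lies; the Bumagin--Wise-style encoding and the Paulin-style rigidity argument are then largely formal consequences.
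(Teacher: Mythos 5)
Your proposal misidentifies where the real difficulty lies, and the mechanism you propose for the rigidity step would not work in this setting.

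First, property (T). You flag ``preservation of~(T) under an infinite family of relators'' as \emph{the} principal obstacle and appeal to \.Zuk/Shalom-type spectral estimates. In fact no spectral argument is needed anywhere: in the paper's construction the group $B$ of Theorem~\ref{main} is produced as a \emph{quotient} of a fixed torsion-free hyperbolic group $H$ with property~(T) (built once and for all, e.g.\ by Cornulier's result), and property~(T) passes automatically to quotients. The entire ``maintain uniform Kazhdan constants'' concern is a non-issue. The actual heavy lifting is on the $Out$ side.

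Second, surjectivity of $Q\to Out(G)$. You propose a ``Paulin-style rigidity argument on the hyperbolic space witnessing acylindrical hyperbolicity.'' This is precisely what cannot work here: the group $G$ you need is \emph{not} hyperbolic (if it were, Paulin's theorem plus property~(T) would force $Out(G)$ to be finite, contradicting the conclusion when $Q$ is infinite), and Paulin's stable-actions argument does not transfer to general acylindrical actions. The paper circumvents this entirely. Instead, after setting up a Rips-type exact sequence $1\to N\to G\to Q\to 1$ via Theorem~\ref{glue} (this is where the infinite family of relators actually lives, not in the final quotient), it embeds two rigid finite-order ``markers'' $s$ of order $2$ and $t$ of order $3$ with controlled centralizers $H_1\cong\ZZ_2\times\ZZ$ and $H_2\cong\ZZ_3\times\ZZ$ (Lemma~\ref{Thm:HypAut}), quotients by a \emph{single} carefully chosen small cancellation word $R$ in the letters $a=x^2$, $b=y^3$, and then shows that any $\phi\in Aut(B)$ must send $s,t$ to conjugates of $s,t^{\pm1}$, hence send $a,b$ to $a^{\pm1}, zb^{\pm1}z^{-1}$. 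The key technical input that forces $z=1$ is Proposition~\ref{Prop:GL}, the strengthened Greendlinger lemma built on $2$-attracting geodesics, applied to the loop $R'$ of~(\ref{Eq:Pi'}); this is the genuinely new ingredient you do not anticipate. Injectivity of $Q\to Out(B)$ is handled separately via Lemma~\ref{Lem:Q->Out}, which uses malnormality of hyperbolically embedded cyclic subgroups rather than anything about $Q$-encodings.

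So while your construction has the right global shape (Rips-type encoding, small cancellation quotient), the two pivots you identify---spectral control of~(T) and Paulin rigidity---are respectively unnecessary and unavailable, and the actual crux (the attracting-geodesic Greendlinger lemma plus the centralizer lemma \ref{Prop:HO} and the detection of automorphisms via torsion) is absent from the proposal.
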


\begin{rem}
Every acylindrically hyperbolic group also contains a unique maximal finite normal subgroup \cite[Theorem 2.24]{DGO}. The same terminology and notation as in the hyperbolic case is used here. Our approach allows us to additionally ensure that the group $G$ from Theorem \ref{app2} has trivial finite radical (see Theorem \ref{app3}). Note, however, that Theorem \ref{app2} (unlike Theorem \ref{app1}) is new even without the condition $K(G)=\{ 1\}$.
\end{rem}

Theorem \ref{app2} strengthens several previously known results, all of which were obtained using some sort of small cancellation theory. Minasyan \cite{Min} prowed that every countable group realizes as $Out(G)$ for a property (T) group $G$.  However, groups constructed in \cite{Min} are far from being acylindrically hyperbolic. Ollivier and Wise \cite{OW} showed that every countable group $Q$ embeds in $Out(G)$ for some acylindrically hyperbolic group $G$ with property (T). (Although acylindrical hyperbolicity is not mentioned in \cite{OW}, groups constructed in \cite{OW} satisfy a small cancellation condition, which implies acylindrical hyperbolicity by the work of Gruber and Sisto \cite{GS}.) In addition, one can ensure that $Q$ is of finite index in $Out(G)$ if $Q$ is finitely generated using the approach suggested in \cite{BO}. However, the technique used in \cite{BO} is not sufficient to prove the equality $Out(G)=Q$. A significant part of this paper is devoted to the development an improved version of small cancellation theory that allows us to obtain a more precise result. Since our work in this direction is likely to have other applications, we discuss it below in more detail.

The classical small cancellation theory studies quotient groups of the form $$G=F(X) /\ll \mathcal R\rr,$$ where $F(X)$ is the free group with basis $X$ and $\mathcal R$ is a set of reduced words in the alphabet $X\cup X^{-1}$ with ``small overlaps". The central result about such groups is the Greendlinger lemma stating that every reduced word in $X\cup X^{-1}$ representing the identity in $G$ contains a ``long" subword of some relation $R\in \mathcal R$.

A generalization of this theory to quotients of groups acting on hyperbolic spaces was suggested by Gromov \cite{Gro} and elaborated by Olshanskii \cite{Ols93}. The main technical tools and ideas employed in \cite{Ols93} go back to the geometric method of studying groups via van Kampen diagrams developed by Olshanskii in the late 1970s and early 1980s, which enabled him to construct examples of groups with unexpected properties and to give a short proof of the Novikov-Adyan theorem \cite{Adi} on groups of finite exponent. For more details, we refer to \cite{Ols-book} and references therein.

Although the paper \cite{Ols93} only deals with hyperbolic groups, many results obtained there hold for relatively hyperbolic and, more generally, acylindrically hyperbolic groups with little modifications, see \cite{Hull,Osi10}. In these settings, the Greendlinger lemma is no longer true and is replaced with a weaker conclusion, which is formalized using the key notion of a \emph{contiguity subdiagram} suggested by Olshanskii \cite{Ols-book, Ols93}. In Section \ref{Sec:WSCC}, we propose a new small cancellation condition, which allows us to eliminate contiguity subdiagrams and obtain a stronger result similar to the original Greendlinger lemma (see Proposition \ref{Prop:GL}). At the core of our approach are hyperbolically embedded collections of subgroups and a new notion of an \emph{attracting geodesic}, which seems to be of independent interest. We also take the opportunity to generalize and strengthen some results of \cite{Hull,Ols93, Osi10}.

The paper is organized as follows. In the next section, we review the necessary background on hyperbolic groups and their generalizations. In Section 3, we develop the small cancellation toolbox necessary for proving Theorems \ref{app1}, and \ref{app2}. The proofs of the latter two theorems are given in Section 4.

\paragraph{Acknowledgments.} I. Chifan was supported by the NSF grants DMS-1854194 and DMS-2154637.  A. Ioana was supported by the NSF grants DMS-1854074 and DMS-2153805, and a Simons Fellowship. D. Osin was supported by the NSF grant DMS-1853989. We would also like to thank the anonymous referee for the careful reading of our manuscript and useful remarks.

\section{Preliminaries}\label{Sec:GTPrelim}

\subsection{Hyperbolic spaces}
Recall that metric space $S$ with a distance function $\d$ is said to be \emph{geodesic}, if every two points $a,b\in S$ can be connected by a path of length $\d(a,b)$. A geodesic metric space $S$ is  \emph{$\delta$-hyperbolic} for some $\delta\ge 0$ if, for any geodesic triangle $\Delta $ in $S$, every side of $\Delta $ is contained in the union of the closed $\delta$-neighborhoods of the other two sides \cite{Gro}.

For a path $p$ in a metric space, we denote by $p_-$ and $p_+$ its origin and terminal point, respectively. If $p$ is rectifiable, we denote by $\ell(p)$ its length. We will need the following standard results about geodesic polygons in hyperbolic spaces.

\begin{lem}\label{Lem:HRect}
Let $(S,\d)$ be a $\delta $-hyperbolic space, $Q=pqrs$ a quadrilateral in $S$ with geodesic sides $p$, $q$, $r$, $s$.
\begin{enumerate}
\item[(a)] Every side of $Q$ belongs to the union of the closed $2\delta$-neighborhoods of the other three sides.
\item[(b)] For any point $x\in p$, we have $\d(x, r)\le 2\delta +\max\{ \ell(q), \ell(s)\}$.
\item[(c)] Suppose that $\ell(p)> \ell (q)+\ell(s)+4\delta $. Then there exists a subpath $t$ of $p$ such that $$\ell(t)=\ell(p)-\ell (q)-\ell(s)-4\delta$$ and $\max\{ \d(t_{-}, r), \d(t_{+}, r)\}  \le 2\delta$.
\end{enumerate}
\end{lem}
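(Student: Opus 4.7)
The strategy is to reduce each item to the standard $\delta$-thinness of geodesic triangles by inserting a diagonal into $Q$, at the cost of a factor of $2$ in the resulting constant. Label the vertices of $Q$ cyclically as $v_1, v_2, v_3, v_4$ with $p = [v_1, v_2]$, $q = [v_2, v_3]$, $r = [v_3, v_4]$, $s = [v_4, v_1]$, and fix a geodesic diagonal $d = [v_1, v_3]$. Then $Q$ decomposes as the union of two $\delta$-thin geodesic triangles $\Delta_1 = pqd^{-1}$ and $\Delta_2 = drs$.

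For (a), any $x \in p$ lies within $\delta$ of $q \cup d$ by thinness of $\Delta_1$; in the second case, the nearby point on $d$ itself lies within $\delta$ of $r \cup s$ by thinness of $\Delta_2$, and the triangle inequality delivers the $2\delta$ bound. Part (b) is then immediate, since the point $y \in q \cup r \cup s$ supplied by (a) is either already on $r$, or lies at distance at most $\ell(q)$ from $v_3 \in r$, or at distance at most $\ell(s)$ from $v_4 \in r$.

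For (c), the plan is to parameterize $p$ by arclength from $v_1$ and estimate $\d(p(u), r)$ using (a). Applying (a) at $p(u)$ produces a $y \in q \cup r \cup s$ with $\d(p(u), y) \le 2\delta$, and tracing the three possibilities gives a dichotomy: if $y \in q$ then $\d(p(u), v_2) \le 2\delta + \ell(q)$, forcing $u \ge \ell(p) - \ell(q) - 2\delta$, while if $y \in s$ then $\d(p(u), v_1) \le 2\delta + \ell(s)$, forcing $u \le \ell(s) + 2\delta$. Under the hypothesis $\ell(p) > \ell(q) + \ell(s) + 4\delta$, the interval $I := [\ell(s) + 2\delta,\ \ell(p) - \ell(q) - 2\delta]$ has length exactly $\ell(p) - \ell(q) - \ell(s) - 4\delta$, and for every $u$ in the open interior of $I$ the only surviving case is $y \in r$, yielding $\d(p(u), r) \le 2\delta$. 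Since $u \mapsto \d(p(u), r)$ is continuous, the same bound extends to the closed interval $I$, and $t := p|_I$ is the required subpath.

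The bookkeeping in (c) is entirely routine; the one point that requires a moment of care is the boundary of $I$, where both dichotomies become non-strict. I would resolve this by the continuity argument above, although a direct case analysis at the endpoints works equally well (for instance, at $u = \ell(s) + 2\delta$, the $y \in q$ alternative contradicts the strict hypothesis, while the $y \in s$ alternative forces $\d(y, v_1) = \ell(s)$, hence $y = v_4$, which also lies on $r$). I do not expect any genuine obstacle.
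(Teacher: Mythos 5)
Your proof is correct and follows the same line as the paper: part (a) by inserting the diagonal $[v_1,v_3]$ and applying $\delta$-thinness to the two resulting triangles, part (b) by one application of the triangle inequality, and part (c) by trimming $p$ by $2\delta+\ell(s)$ and $2\delta+\ell(q)$ at its two ends and then invoking (a) at the trimmed endpoints. The paper's argument in (c) works directly with the two endpoints $t_-$, $t_+$ rather than your interval-and-continuity framing, but the two are the same argument in different clothing; your endpoint dichotomy is exactly what the paper leaves implicit.
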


\begin{proof}
The first part can be easily derived from the definition of a hyperbolic space by drawing a diagonal in $Q$. Part (b) follows from (a) by the triangle inequality. To prove (c), it suffices to take the subpath $t$ of $p$ such that $\d(p_-, t_-)= 2\delta + \ell(q)$ and $\d(p_+, t_+)= 2\delta + \ell(s)$ (see Fig. \ref{fig3b1}). Since $\ell(p)> \ell (q)+\ell(s)+4\delta $, the distances from $t_-$ and $t_+$ to the union of the interiors of $s$ and $q$ is greater than $2\delta$. Combining this with part (a), we obtain $\max\{ \d (t_-, r), \d(t_+, r)\} \le 2\delta$.
\end{proof}

The next lemma can be thought of as a generalization of the previous one to more general polygons. It is a simplification of \cite[Lemma 10]{Ols93}.

\begin{lem}[Olshanskii]\label{N123}
Let $(S,\d)$ be a $\delta $-hyperbolic space. Suppose that the set of all sides of a geodesic polygon $P=p_1 p_2\ldots p_n$ is partitioned into two subsets $A$ and $B$. Let $\alpha$ (respectively $\beta$) denote the sum of lengths of sides from $A$ (respectively $B$). Assume, in addition, that $\alpha > \max\{c n, 10^3\beta \}$ for some $c \ge 3\cdot  10^4\delta$. Then there
exist two distinct sides $p_i, p_j\in A$ and a subpath $t$ of $p_i$ of length greater than $10^{-3}c$ such that $$\max\{ \d(t_-, p_j), \d(t_+, p_j)\} \le 13\delta.$$
\end{lem}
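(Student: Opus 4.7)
The plan is to combine a pigeonhole argument with the quadrilateral estimate of Lemma \ref{Lem:HRect}. The two quantitative hypotheses $\alpha > cn$ and $\alpha > 10^3\beta$ play complementary roles: the first produces long sides inside $A$, and the second forces the cumulative contribution of $B$-sides to be negligible compared to $A$.

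First I would pigeonhole to locate a long $A$-side. Since $\alpha > cn \geq c|A|$, some $p_i \in A$ satisfies $\ell(p_i) > c$. A refinement is also useful: discarding from $A$ the sides of length below $c/2$ removes at most $cn/2 < \alpha/2$ of the total length, so a positive fraction of $\alpha$ is actually carried by long $A$-sides. This will guarantee the existence of a sufficiently substantial partner $p_j \in A$ at the end.

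Next I would replace $P$ by an auxiliary geodesic polygon $P'$ in which $A$-sides dominate. Form $P'$ by replacing each maximal run of consecutive $B$-sides of $P$ by a single geodesic shortcut in $S$; the $A$-sides of $P$ remain, while the new shortcut sides have total length at most $\beta < \alpha/10^3$. Thus $P'$ is a polygon whose non-$A$ sides are negligible in length. The heart of the argument is to locate inside the long side $p_i$ a subpath of length at least $10^{-3}c$ that stays close to a single other side $p_j$. I would look at points on the interior of $p_i$ at distance greater than $\ell(p_{i-1})+\ell(p_{i+1})+2\delta$ from the endpoints of $p_i$ in $P'$, and argue that such points are forced to shadow the "far" boundary of $P'$ at controlled distance. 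A pigeonhole over the bounded number of sides of $P'$ different from $p_i$ then extracts a subpath $t \subseteq p_i$ with $\ell(t) > 10^{-3}c$ whose endpoints lie within $13\delta$ of a single side $p_j$ of $P'$. Finally, the domination $\alpha > 10^3\beta$ rules out $p_j$ being a shortcut side, since a shortcut has length at most $\beta$ and cannot absorb a subpath of length $10^{-3}c \gg \delta$ at distance $\leq 13\delta$; therefore $p_j \in A$.

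The main obstacle will be upgrading the naive slim-polygon bound, which is $(n-2)\delta$ and grows with $n$, to the uniform constant $13\delta$ required in the conclusion. This is precisely what the structural hypothesis $\alpha > cn$ buys: it prevents $p_i$ from being screened off from the rest of $P'$ by many short interspersed sides, so that along any given portion of $p_i$ only a bounded number of other sides are effectively relevant. Concretely, I would iterate the quadrilateral estimate of Lemma \ref{Lem:HRect}(c), at each stage using a short geodesic diagonal to cut off a side shorter than a fixed fraction of $c$, until the configuration is reduced to a quadrilateral-type figure whose two broken sides have total length controlled in terms of $\beta$ and $cn$. Applying Lemma \ref{Lem:HRect}(c) to this effective quadrilateral then produces $t$ with $\ell(t) > 10^{-3}c$ and $\max\{\d(t_-,p_j),\d(t_+,p_j)\} \leq 13\delta$, where the constant $13\delta$ arises as $2\delta$ from the quadrilateral bound plus a bounded additive contribution absorbed during the diagonal-cutting reduction.
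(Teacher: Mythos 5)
The paper does not give a proof of this lemma; it is quoted from Olshanskii's 1993 paper, so I can only evaluate your proposal on its own terms. It identifies the right ingredients (a long $A$-side by pigeonhole, shortcutting $B$-sides, quadrilateral slimness), but the central step is unsupported. Replacing each maximal run of consecutive $B$-sides by a geodesic shortcut does \emph{not} produce a polygon $P'$ with a bounded number of sides: if $A$- and $B$-sides alternate along $P$ (or, trivially, if $B=\emptyset$), then $P'$ still has $\Theta(n)$ sides. So the claim ``a pigeonhole over the bounded number of sides of $P'$'' has no basis, and what that pigeonhole would actually yield is a subpath of length on the order of $c/n$, not the required $10^{-3}c$. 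Your own diagnosis --- that the real obstacle is upgrading the $O(n\delta)$ slim-polygon bound to a uniform $13\delta$ --- is accurate, but the proposal then asserts that a ``diagonal-cutting reduction'' accomplishes this ``with a bounded additive contribution'' without showing how the accumulated error over what could be $\sim n$ cuts stays below $13\delta$. That is precisely the content of the lemma, not a remark one can defer.

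There is a second, independent gap in how $p_j\in A$ is deduced. You argue that a shortcut side ``has length at most $\beta$ and cannot absorb a subpath of length $10^{-3}c\gg\delta$ at distance $\le 13\delta$.'' But the hypothesis gives $\beta<10^{-3}\alpha$, and $\alpha$ is not bounded in terms of $c$ --- the polygon is free to have sides much longer than $c$, so $\beta$ (and a single $B$-side, or a single shortcut) can easily exceed $10^{-3}c$. The inequality $\alpha>10^3\beta$ must instead enter through a \emph{global} accounting of overlaps (comparing the total length of $A$ within $13\delta$ of $B$-sides against $\beta$ and $n\delta$), not through a pointwise length comparison with $c$. As written, neither the extraction of $t$ nor the exclusion of $B$-sides is established, so the proposal does not yet constitute a proof.
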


Throughout this paper, we often think of graphs as metric spaces. Given a connected graph $\Gamma$, we identify every open edge of $\Gamma$ with $(0,1)$ and define the distance between two points $a,b\in \Gamma$ to be the length of the shortest path in $\Gamma$ connecting $a$ to $b$.

\begin{figure}
   \begin{center}
   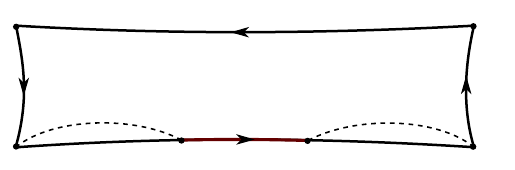
   \end{center}
   \vspace{-3mm}
   \caption{Proof of Lemma \ref{Lem:HRect} (c).}\label{fig3b1}
 \end{figure}

\begin{rem}
If $\Gamma $ is a hyperbolic graph, we can (and will) always assume that its hyperbolicity constant is a non-negative integer. Further, if the sides of the geodesic polygons considered in Lemma \ref{Lem:HRect} and Lemma \ref{N123} are (combinatorial) paths starting and ending at some vertices of $\Gamma$, elementary arguments show that we can additionally require the endpoints of the path $t$ in each of these lemmas to be vertices of $\Gamma$ as well.
\end{rem}

A group $G$ is {\it hyperbolic} if it is generated by a finite set $X$ and its Cayley graph $\Gamma (G,X)$ is a hyperbolic metric space. This definition is independent of the choice of a particular finite generating set $X$. A hyperbolic group is called \emph{elementary} if it contains a cyclic subgroup of finite index. For examples and basic properties of hyperbolic groups, we refer the reader to Chapters III.H and III.$\Gamma$ of \cite{BH}

\subsection{Relative hyperbolicity and hyperbolically embedded subgroups} \label{Sec:Hyp}

Hyperbolically embedded collections of subgroups play a crucial role in our paper. To formulate the definition, it is convenient to work with generating alphabets instead of generating sets of groups. By a \emph{generating alphabet} $\mathcal A$ of a group $G$ we mean an abstract set given together with a map $\mathcal A\to G$ whose image generates $G$; to simplify our notation, we do not distinguish between elements of $\mathcal A$ and their images in $G$ whenever no confusion is possible.

By the \emph{Cayley graph} of $G$ with respect to a generating alphabet $\mathcal A$, denoted $\Gamma (G, \mathcal A)$, we mean a graph with the vertex set $G$ and the set of edges defined as follows. For every $a\in \mathcal A$ and every $g\in G$, there is an oriented edge $e$ going from $g$ to $ga$ in $\Gamma (G, \mathcal A)$ and labelled by $a$. Given a combinatorial path $p$ in $\Gamma (G, \mathcal A)$, we denote by $\lab (p)$ its label and by $p^{-1}$ the combinatorial inverse of $p$. We use the notation $d_{\mathcal A}$ and $|\cdot|_{\mathcal A}$ to denote the standard metric on $\Gamma (G,\mathcal A)$ and the length function on $G$ with respect to the (image of) $\mathcal A$.

In our paper, this terminology will be used in the following situation. Suppose that we have a group $G$, a collection of subgroups $\Hl$ of $G$, and a subset $X\subseteq G$ such that $X$ and the union of all $H_i$ together generate $G$. In this case we say that $X$ is a \emph{relative generating set} of $G$ with respect to $\Hl$. We think of $X$ and the subgroups $H_i$ as abstract sets and consider the disjoint unions
\begin{equation}\label{calA}
\mathcal H = \bigsqcup\limits_{i\in I} H_i\;\;\;\;\; {\rm and}\;\;\;\;\; \mathcal A= X \sqcup \mathcal H.
\end{equation}
Let $\mathcal A^\ast$ denote the free monoid on $\mathcal A$, i.e., the set of all words in the alphabet $\mathcal A$. For a word $W\in \mathcal A^\ast$, let $\|W\|$ denote its length. Given a word $a_1\ldots a_k\in \mathcal A^\ast$, we say that it \emph{represents} an element $g\in G$ if $g=a_1\cdots a_k$ in $G$.

\begin{conv}
Henceforth, we assume that all generating sets and relative generating sets are symmetric, i.e., closed under inversion.
\end{conv}

This convention implies that the alphabet $\mathcal A$ defined in (\ref{calA}) is also symmetric since so is $X$ and each $H_i$ is a subgroup. To take care of the possible ambiguity arising from the fact that distinct letters of $\mathcal A$ may represent the same element of $G$, we agree to think of $a^{-1}$ as a letter from $X$ (respectively, from $H_i$) if $a\in X$ (respectively, $a\in H_i$). Thus every element of $G$ can be represented by a word from $\mathcal A^\ast$.

In the settings described above, we can think of the Cayley graphs $\Gamma (H_i, H_i)$ as subgraphs of $\G$. For every $i\in I$, we introduce a (generalized) metric $\dl \colon H_i \times H_i \to [0, +\infty]$ as follows.

\begin{defn}
Given $g,h\in H_i$, let $\dl (g,h)$ be the length of a shortest path in $\G $ that connects $g$ to $h$ and contains no edges of $\Gamma (H_i, H_i)$. If no such a path exists, we set $\dl (h,k)=\infty $.
\end{defn}

Clearly $\dl $ satisfies the triangle inequality (with addition extended to $[0, +\infty]$ in the natural way). We are now ready to define hyperbolically embedded collections of subgroups introduced in \cite{DGO}.

\begin{defn}\label{hedefn}
A collection of subgroups $\Hl$ of $G$ is \emph{hyperbolically embedded  in $G$ with respect to a subset $X\subseteq G$}, denoted $\Hl \h (G,X)$, if the group $G$ is generated by the alphabet $\mathcal A$ defined by (\ref{calA}) and the following conditions hold.
\begin{enumerate}
\item[(a)] The Cayley graph $\G $ is hyperbolic.
\item[(b)] For every $n\in \mathbb N$ and every $i\in I$, the set $\left\{ h\in H_i\mid \dl(1,h)\le n\right\}$ is finite.
\end{enumerate}
Further, we say  that $\Hl$ is \emph{hyperbolically embedded} in $G$ and write $\Hl\h G$ if $\Hl\h (G,X)$ for some $X\subseteq G$.
\end{defn}

\begin{figure}
  \begin{center}
      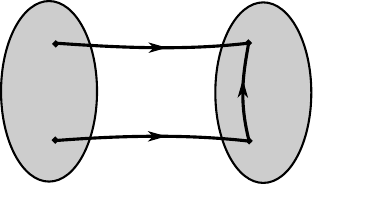
        \end{center}
   \caption{A path of length $3$ in the Cayley graph $\G$, where $G=\langle x\rangle\times H$ and $\mathcal A= \{ x\} \sqcup H$, connecting any elements $g,h\in H$ and avoiding edges of $\Gamma (H,H)$}\label{fig0}
 \end{figure}

To help the reader become familiar with these notions, we discuss three elementary examples.

\begin{ex}\label{bex}
\begin{enumerate}
\item[(a)] For any group $G$ we have $G\h G$.  Indeed we can take $X=\emptyset $ in this case. Then $\Gamma (G, \mathcal A)=\Gamma (G,G)$ and $\widehat\d_G (g,h)=\infty $ for any $g\ne h$.
\item[(b)] If $H$ is a finite subgroup of a group $G$, then $H\h G$. Indeed, it is straightforward to check that $H\h (G,X)$ for $X=G$.

\item[(c)] Let $G=H\times \mathbb Z$, let $X=\{ x\} $, where $x$ is a generator of $\mathbb Z$, and let $\mathcal A= \{ x\} \sqcup H$. It is easy to see that the graph $\G$ is quasi-isometric to a line and hence it is hyperbolic. However, every two elements $g,h\in H$ can be connected by a path of length at most $3$ in $\G$ that avoids edges of $\Gamma (H,H)$ (see Fig. \ref{fig0}). Thus $\widehat\d_H(g,h)\le 3$ and $H$ is not hyperbolically embedded in $G$ with respect to $X$ whenever $H$ is infinite.
\end{enumerate}
\end{ex}

Hyperbolically embedded collection of subgroups generalize peripheral structures of relatively hyperbolic groups.
Indeed, we have the following proposition. Readers unfamiliar with relative hyperbolicity can regard it as a definition.

\begin{prop}[{\cite[Proposition 4.28]{DGO}}]\label{rhhe}
A group $G$ is hyperbolic relative to a finite collection of subgroups $\Hl$ if and only if $\Hl\h(G, X)$ for some (equivalently, any) finite subset $X\subseteq G$.
\end{prop}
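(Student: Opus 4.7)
The plan is to derive this equivalence from Osin's characterization of relative hyperbolicity: for a finite family $\Hl$ and a finite relative generating set $X$, the group $G$ is hyperbolic relative to $\Hl$ if and only if the Cayley graph $\G$ is hyperbolic and \emph{fine} in Bowditch's sense, i.e., every edge belongs to only finitely many simple circuits of each bounded length. Since hyperbolicity of $\G$ is literally item (a) of Definition \ref{hedefn}, the task reduces to identifying fineness of $\G$ with the properness condition (b), under the standing assumption that $X$ and $I$ are both finite.

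For the direction fineness $\Rightarrow$ (b), I would fix $n\in\mathbb N$, $i\in I$, and consider any $h\in H_i$ with $\dl(1,h)\le n$. By definition there is a path $p$ in $\G$ from $1$ to $h$ of length at most $n$ containing no edge of $\Gamma(H_i,H_i)$. Closing $p$ with the single $H_i$-labelled edge from $h$ back to $1$ yields a closed path of length at most $n+1$ passing through the fixed $H_i$-edge $e$ based at $1$. Extracting a simple subcircuit through $e$ and invoking fineness bounds the number of such circuits by a constant depending only on $n$, and since each candidate $h$ gives rise to at least one such circuit, only finitely many $h$ are possible.

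For the converse, condition (b) together with finiteness of $X$ and $I$ yields fineness via a component decomposition argument. Any simple circuit $c$ of length at most $N$ through a fixed edge $e$ based at $1$ can be split at its maximal $H_i$-subpaths and interstitial $X$-edges. For each such maximal subpath, the complementary portion of $c$ connects its endpoints by a path of length at most $N$ avoiding interior edges of the given $\Gamma(H_i,H_i)$-coset, so (b) leaves only finitely many choices for each component; the finitely many possibilities for the $X$-labelled edges (using $|X|<\infty$) and for the index $i$ of each component (using $|I|<\infty$) then bound the total number of circuits through $e$.

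The principal obstacle is the combinatorial bookkeeping in this last step: the decomposition of a simple circuit into components is not canonical, and one must verify that the count furnished by (b) is neither inflated by this ambiguity nor undercut by the need to match endpoints across components consistently. This delicate but routine accounting is carried out in detail in the proof of \cite[Proposition 4.28]{DGO}, which, combined with Osin's fine-Cayley-graph characterization of relative hyperbolicity, delivers the proposition.
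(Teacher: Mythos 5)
The paper itself offers no proof of Proposition \ref{rhhe}; it is cited verbatim from \cite[Proposition 4.28]{DGO}. So the only thing to assess is the internal correctness of your sketch, and there is a fatal flaw at the very start: the Cayley graph $\G = \Gamma(G, X \sqcup \mathcal{H})$ is essentially \emph{never} fine in Bowditch's sense when some $H_i$ is infinite. Indeed, $\Gamma(H_i, H_i)$ is a subgraph of $\G$ and is a complete graph on the vertex set $H_i$; hence any edge of $\Gamma(H_i, H_i)$ lies on infinitely many simple circuits of length $3$ as soon as $|H_i| = \infty$. For example, $G = H \ast \mathbb{Z}$ with $H$ infinite is hyperbolic relative to $\{H\}$, yet $\Gamma(G, \{x^{\pm 1}\} \sqcup H)$ is not fine. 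Bowditch's fine-and-hyperbolic characterization of relative hyperbolicity refers to the \emph{coned-off} Cayley graph, where each coset $gH_i$ is collapsed to (or cone-connected through) a single apex vertex; that graph is quasi-isometric to $\G$, but fineness is not a quasi-isometry invariant, and the two graphs behave entirely differently in this respect.

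Both halves of your argument inherit this problem. Your ``(b) $\Rightarrow$ fineness'' direction is outright false, and the combinatorial bookkeeping you worry about cannot be made to work: a simple circuit of $\G$ can lie entirely inside $\Gamma(gH_i, H_i)$, in which case your decomposition into $H_i$-components has no ``complementary portion'' to which condition (b) could be applied, and (b) gives no control at all over intermediate vertices of a multi-edge $H_i$-component even when such a complementary arc exists. Your ``fineness $\Rightarrow$ (b)'' direction begins from a hypothesis that is never satisfied in the setting of the proposition; and even setting that aside, the circuits $p_h \cdot e_h^{-1}$ you construct pass through the varying edges $e_h = \{1, h\}$ rather than a common fixed edge, so fineness at a single edge would not bound the number of admissible $h$. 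To carry out a correct proof one has to either work with the genuinely fine coned-off graph, or (as \cite{DGO} does) pass through a different characterization of relative hyperbolicity, such as a linear relative isoperimetric inequality.
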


If $G$ is hyperbolic relative $\Hl$, the subgroups $H_i$ are called the \emph{peripheral subgroups} of $G$. For more on relative hyperbolicity, we refer to \cite{Osi06} and references therein.

We now turn to properties of hyperbolically embedded subgroups. An elaborated version of the argument from Example \ref{bex} (c) can be used to derive the following.

\begin{prop}[{\cite[Proposition 4.33]{DGO}}]\label{Prop:maln}
Let $G$ be a group, $\Hl $ a hyperbolically embedded collection of subgroups of $G$.
\begin{enumerate}
\item[(a)] For any $i\in I$ and any $g\in G\setminus H_i$, we have $|H_i \cap g^{-1}H_i g|<\infty $.
\item[(b)] For any distinct $i,j\in I$ and any $g\in G$, we have $|H_i \cap g^{-1}H_jg|<\infty$.
\end{enumerate}
\end{prop}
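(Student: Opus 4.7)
The plan is to argue by contradiction and use a geodesic quadrilateral to exhibit a uniform upper bound on the $\widehat{\d}_{H_i}$-distance from $1$ to elements of the putatively infinite intersection, contradicting condition (b) of Definition \ref{hedefn}. I treat both parts in a unified way: assume $H_i\cap g^{-1}H_jg$ is infinite, where either $i=j$ with $g\notin H_i$ (part (a)) or $i\neq j$ (part (b)). For each $h\in H_i$ with $h':=ghg^{-1}\in H_j$, I consider the geodesic quadrilateral $Q=pqrs$ in $\Gamma(G,\mathcal A)$ whose cyclic vertices are $1,\,h,\,hg=h'g,\,g$. Two opposite sides $p=[1,h]$ and $r=[hg,g]$ are single edges labeled by the letters $h\in H_i$ and $(h')^{-1}\in H_j$ respectively, while the other two sides $q=[h,hg]$ and $s=[g,1]$ are geodesics of length at most $|g|_{\mathcal A}$. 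Traversing the three sides $s^{-1},r^{-1},q^{-1}$ gives an alternative path $\tau$ from $1$ to $h$ of length at most $2|g|_{\mathcal A}+1$, a bound independent of $h$.

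The crucial point is to verify, after possibly a small modification, that $\tau$ avoids every edge of the subgraph $\Gamma(H_i,H_i)$. The middle edge $r^{-1}$ is automatically not such an edge: in part (b) its label is a letter of $H_j$, and since $\mathcal H=\bigsqcup_{k\in I}H_k$ is a disjoint union inside the alphabet $\mathcal A$, the $H_j$-letters are distinct from any $H_i$-letter; in part (a) both endpoints $g$ and $hg$ of $r^{-1}$ lie outside $H_i$ because $g\notin H_i$, so no edge between them can lie in the subgraph $\Gamma(H_i,H_i)$ whose vertex set is the coset $H_i$. For the two geodesic sides $q$ and $s$, I first pick them to be reduced in the sense that no two consecutive edges are labeled by letters from the same $H_k$ (any such pair can be collapsed to a single letter, shortening the path). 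Any remaining $\Gamma(H_i,H_i)$-edge on $q$ or $s$ is then an isolated single edge with both endpoints in the coset $H_i$, and using the $\delta$-hyperbolicity of $\Gamma(G,\mathcal A)$ together with Lemma \ref{Lem:HRect}, each such edge can be replaced by a detour through a vertex of $G\setminus H_i$ of length bounded by a constant depending only on $\delta$ and $|g|_{\mathcal A}$.

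Combining the three steps yields a constant $C=C(g,\delta)$, independent of $h$, such that $\widehat{\d}_{H_i}(1,h)\le C$ for every $h$ in the intersection. Since the intersection is assumed infinite, this contradicts the finiteness of $\{h\in H_i:\widehat{\d}_{H_i}(1,h)\le C\}$ asserted in Definition \ref{hedefn}(b), and the proposition follows. The main technical obstacle is the detour argument of the previous paragraph: one must verify that an isolated $\Gamma(H_i,H_i)$-edge appearing inside a geodesic of $\Gamma(G,\mathcal A)$ can in fact be bypassed by a uniformly bounded path that itself avoids $\Gamma(H_i,H_i)$. This is where the hyperbolicity of $\Gamma(G,\mathcal A)$ is essential, along with the existence of vertices outside $H_i$, which is automatic in part (a) and can be assumed in part (b) after disposing of the degenerate subcase $H_i=G$ using Example \ref{bex}(a) (in which all other $H_j$ are forced to be finite).
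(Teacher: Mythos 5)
The paper does not prove this proposition; it cites \cite[Proposition 4.33]{DGO} and merely remarks that it follows from ``an elaborated version of the argument from Example \ref{bex}(c).'' Your plan—bound $\widehat{\d}_{H_i}(1,h)$ by exhibiting a short path from $1$ to $h$ that avoids $\Gamma(H_i,H_i)$, then invoke condition (b) of Definition \ref{hedefn}—is exactly that elaboration, so the overall route is correct. But there are two problems with the execution, one minor and one fatal.

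The minor one: with $h':=ghg^{-1}$, you have $h'g=gh$, not $hg$, so your quadrilateral $1,h,hg,g$ does not have the side $[hg,g]$ labeled by a single $H_j$-letter; its label is $(g^{-1}hg)^{-1}$, which need not lie in $H_j$. This is only a $g\leftrightarrow g^{-1}$ bookkeeping slip (use vertices $1,g^{-1},hg^{-1},h$, or set $h':=g^{-1}hg$ and vertices $1,h,hg,g$), but as written the ``single $H_j$-edge'' claim for $r$ is false.

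The fatal one is the detour step. You claim that an isolated $\Gamma(H_i,H_i)$-edge sitting on a geodesic side can, via hyperbolicity and Lemma \ref{Lem:HRect}, be bypassed by a bounded-length path avoiding $\Gamma(H_i,H_i)$. This is circular: bypassing a $\Gamma(H_i,H_i)$-edge between $u,v\in H_i$ by a short detour that also avoids $\Gamma(H_i,H_i)$ is exactly asserting a uniform bound on $\widehat{\d}_{H_i}(u,v)$, which is the very quantity that condition (b) of Definition \ref{hedefn} requires to be unbounded on infinite $H_i$; and Lemma \ref{Lem:HRect} says nothing about where the replacement path may or may not go. If that detour step were valid with a constant depending only on $\delta$ and $|g|_{\mathcal A}$, it would in particular force $\widehat{\d}_{H_i}$ to be bounded on all of $H_i$, contradicting hyperbolic embeddedness whenever $H_i$ is infinite. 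In fact the step is unnecessary. Observe instead that since every element of $H_i$ is a single letter of $\mathcal A$, a geodesic in $\Gamma(G,\mathcal A)$ joining a vertex of $H_i$ to a vertex of $G\setminus H_i$ can meet $H_i$ only at its $H_i$-endpoint and, possibly, the vertex adjacent to it; hence each of your two geodesic sides carries at most one $\Gamma(H_i,H_i)$-edge, namely the one incident to $1$ or to $h$. These can be handled without detours: let $W$ be a fixed geodesic word for $g^{-1}$, so the path reads $1\xrightarrow{W}g^{-1}\xrightarrow{h'}hg^{-1}\xrightarrow{W^{-1}}h$. If the first letter $c$ of $W$ happens to be an $H_i$-letter (the only bad case), drop the first and last edges to get a path of length $2|g|_{\mathcal A}-1$ from $c$ to $hc$ avoiding $\Gamma(H_i,H_i)$. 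Since $\widehat{\d}_{H_i}$ is invariant under left multiplication by $H_i$, this gives $\widehat{\d}_{H_i}(1,c^{-1}hc)\leq 2|g|_{\mathcal A}-1$ with $c$ independent of $h$, and the map $h\mapsto c^{-1}hc$ is injective, so infinitude of the intersection still contradicts Definition \ref{hedefn}(b). A final remark: this argument uses only local finiteness (Definition \ref{hedefn}(b)) and the structure of $\mathcal A$ as a disjoint union, not hyperbolicity of $\Gamma(G,\mathcal A)$; your invocation of $\delta$-hyperbolicity is a red herring.
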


We mention one useful corollary. Recall that a group $G$ is said to have \emph{infinite conjugacy classes} (abbreviated \emph{ICC}) if the conjugacy class of every non-trivial element of $G$ is infinite.

\begin{cor}\label{Cor:ICC}
Let $G$ be a group containing two subgroups $H_1$, $H_2$ such that $\{ H_1, H_2\}\h G$ and $H_1\cap H_2=\{ 1\}$. Suppose that $S\le G$ and $S\cap H_i$ is infinite for $i=1,2$. Then, for every $g\in G\setminus \{1\}$, the set $g^S=\{ s^{-1}gs\mid s\in S\}$ is infinite. In particular, $S$ is  ICC.
\end{cor}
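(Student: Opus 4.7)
The plan is to argue by contradiction and reduce to Proposition~\ref{Prop:maln}(a). Suppose $g \in G \setminus \{1\}$ has finite conjugacy orbit $g^S$ under $S$. By the orbit-stabilizer correspondence, the $S$-centralizer $C_S(g) = \{s \in S : s^{-1}gs = g\}$ has finite index in $S$. Since $S \cap H_i$ is infinite for $i = 1, 2$, intersecting with $C_S(g)$ and using the finite-index property yields that $C_S(g) \cap H_i$ is still infinite for $i = 1, 2$.

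The next step is a case split based on which $H_i$ (if any) contains $g$. Because $H_1 \cap H_2 = \{1\}$ and $g \neq 1$, there exists some $i \in \{1, 2\}$ with $g \notin H_i$: either $g \notin H_1$ directly, or $g \in H_1 \setminus \{1\}$ which forces $g \notin H_2$. Fix such an index $i$. Every element $h \in C_S(g) \cap H_i$ satisfies $g^{-1} h g = h$, hence $h \in H_i \cap g^{-1} H_i g$. Since $g \notin H_i$, Proposition~\ref{Prop:maln}(a) gives that $H_i \cap g^{-1} H_i g$ is finite, contradicting the infiniteness of $C_S(g) \cap H_i$ established in the previous step. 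This contradiction shows $g^S$ must be infinite for every non-trivial $g \in G$.

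The ``In particular'' clause is then immediate: applying the statement to any $g \in S \setminus \{1\}$, the set $g^S$ (which is contained in $S$) is infinite, so the $S$-conjugacy class of $g$ inside $S$ is infinite, i.e., $S$ is ICC.

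I do not anticipate any real obstacle here; the argument is entirely formal once Proposition~\ref{Prop:maln}(a) is invoked. The only mildly subtle point is remembering that the hypothesis $H_1 \cap H_2 = \{1\}$ is what guarantees $g$ lies outside at least one of the two subgroups, which is precisely what one needs in order for the almost-malnormality statement of Proposition~\ref{Prop:maln}(a) to produce a finite intersection.
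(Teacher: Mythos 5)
Your argument is correct and follows essentially the same route as the paper's: reduce finiteness of $g^S$ to $C_S(g)$ having finite index in $S$, deduce that $C_S(g)\cap H_i$ is infinite for both $i$, and invoke Proposition~\ref{Prop:maln}(a) to force $g$ into $H_1\cap H_2=\{1\}$. The paper simply states this in two compressed sentences (concluding directly that $g\in H_1\cap H_2$), whereas you carry out the contrapositive via a small case split on which $H_i$ fails to contain $g$; the content is the same.
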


\begin{proof}
If $g^S$ is finite, then $g$ commutes with a finite index subgroup of $S$. Since both $S\cap H_1$ and $S\cap H_2$ are infinite, Proposition \ref{Prop:maln} implies that $g\in H_1\cap H_2=\{ 1\}$.
\end{proof}

The following result allows one to modify collections of hyperbolically embedded subgroups.

\begin{prop}[{\cite[Theorem 4.35]{DGO}}]\label{trans}
Let $G$ be a group, $\Hl$ a finite collection of subgroups of $G$. Suppose that $\Hl\h (G, X)$ for some $X\subseteq G$ and, for each $i\in I$, there is a collection of subgroups $\{ K_{ij}\}_{j\in J_i}$ of $H_i$ and a subset $Y_i\subseteq H_i$ such that $\{ K_{ij}\}_{j\in J_i}\h (H_i, Y_i)$. Then we have $\bigcup_{i\in I} \{ K_{ij}\}_{j\in J_i}\h (G,Z)$, where
\begin{equation}\label{Eq:ZX}
Z=X\cup \left(\bigcup_{i\in I}Y_i\right).
\end{equation}
\end{prop}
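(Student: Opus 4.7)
The two conditions of Definition \ref{hedefn} must be verified for the collection $\bigcup_{i\in I}\{K_{ij}\}_{j\in J_i}$ relative to $Z$. Set $\mathcal A=X\sqcup\bigsqcup_i H_i$, $\mathcal A_i=Y_i\sqcup\bigsqcup_j K_{ij}$ for each $i\in I$, and $\mathcal B=Z\sqcup\bigsqcup_{i,j}K_{ij}$; by hypothesis, $\G$ and each $\Gamma(H_i,\mathcal A_i)$ are hyperbolic, and the relative metrics $\widehat d_{H_i}$ and $\widehat d_{K_{ij}}^{H_i}$ (computed in $\G$ and $\Gamma(H_i,\mathcal A_i)$ respectively) are proper. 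My plan is to work with an auxiliary graph $\widetilde\Gamma$ obtained from $\G$ by, for each $i\in I$ and each left coset $gH_i$, deleting all edges of $g\Gamma(H_i,H_i)$ and gluing in a translated copy of $\Gamma(H_i,\mathcal A_i)$ on the same vertex set. Since every letter of $\mathcal B$ is either an $X$-letter or a letter of some $\mathcal A_i$, the graph $\widetilde\Gamma$ is $G$-equivariantly identified with $\Gamma(G,\mathcal B)$ up to the parallel edges arising from the overlaps in $Z=X\cup\bigcup_i Y_i$; in particular, $\widetilde\Gamma$ and $\Gamma(G,\mathcal B)$ are quasi-isometric, and it suffices to verify both conditions of Definition \ref{hedefn} for $\widetilde\Gamma$.

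For part (a), I would prove hyperbolicity of $\widetilde\Gamma$ by a combination-theorem-style argument. Let $\pi\colon\widetilde\Gamma\to\G$ be the $G$-equivariant map that collapses each maximal sub-path lying inside a single coset $gH_i$ to a single $H_i$-edge. Then $\pi$ sends a geodesic in $\widetilde\Gamma$ to a quasi-geodesic in the hyperbolic graph $\G$, and the internal geometry of each coset $gH_i$ in $\widetilde\Gamma$ is that of the uniformly hyperbolic graph $\Gamma(H_i,\mathcal A_i)$ (uniform because $|I|<\infty$). The properness of $\widehat d_{H_i}$ prevents pairs of distinct $H_i$-cosets from being synchronously fellow-travelled under $\pi$, so that the preimage of a thin triangle in $\G$ can be assembled from finitely many thin pieces of the replacement graphs glued along controlled regions. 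A standard thin-triangles argument then yields $\delta'$-hyperbolicity of $\widetilde\Gamma$ for some $\delta'$ depending only on the input data.

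For part (b), fix $(i,j)\in I\times J_i$ and $n\in\mathbb N$, and let $k\in K_{ij}$ with $\widehat d_{K_{ij}}(1,k)\le n$ in $\Gamma(G,\mathcal B)$. Choose a realising path $p$ and decompose its label $W\in\mathcal B^\ast$ into maximal runs in $\mathcal A_l^\ast$ interspersed with single $X$-letters; collapsing every $\mathcal A_l$-block to one $H_l$-letter produces $W'\in\mathcal A^\ast$ of length $\le n$ representing $k$. If $W'$ consists of a single $H_i$-letter, then the corresponding $\mathcal A_i$-block of $W$ is already a path of length $\le n$ in $\Gamma(H_i,\mathcal A_i)$ from $1$ to $k$ avoiding edges of $\Gamma(K_{ij},K_{ij})$, so $\widehat d_{K_{ij}}^{H_i}(1,k)\le n$ and finiteness follows from $\{K_{ij}\}_{j\in J_i}\h(H_i,Y_i)$. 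Otherwise, I would bound $\widehat d_{H_i}(1,k)$ by a linear function of $n$: any $H_i$-letter of $W'$ that sits in the identity coset of $H_i$ comes from an $\mathcal A_i$-block of $W$ starting in $H_i$ and avoiding $\Gamma(K_{ij},K_{ij})$-edges, so by the previous case (with an enlarged constant) its value belongs to a finite subset of $H_i$; rerouting each such letter through $X$-edges and $H_l$-edges ($l\ne i$) yields a path in $\G$ from $1$ to $k$ of length at most $n+C$ avoiding $\Gamma(H_i,H_i)$, and finiteness then follows from $\{H_i\}_{i\in I}\h(G,X)$. The main obstacle is the combination step in part (a): converting hyperbolicity of the ambient graph, hyperbolicity of the replacement pieces, and properness of $\widehat d_{H_i}$ into hyperbolicity of $\widetilde\Gamma$ is where most of the geometric work lies; part (b) is comparatively routine once one is careful about the distinction between the identity coset of $H_i$ and the identity coset of $K_{ij}$.
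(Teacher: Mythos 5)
The paper gives no proof of this proposition; it is quoted verbatim from \cite[Theorem 4.35]{DGO}, so there is no internal argument to compare against. Your high-level plan is the natural one and matches the DGO strategy: replace each coset copy of $\Gamma(H_i,H_i)$ inside $\Gamma(G,\mathcal A)$ by a translated copy of $\Gamma(H_i,\mathcal A_i)$, identify the result with $\Gamma(G,\mathcal B)$ up to parallel edges, and verify the two clauses of Definition~\ref{hedefn}. The problem is that the substance of the theorem lies precisely in the two steps you leave unverified. For part (a), you acknowledge that the ``combination'' step is where the geometric work lies, but your outline does not supply it: the assertion that the collapsing map $\pi$ sends $\widetilde\Gamma$-geodesics to quasi-geodesics in $\Gamma(G,\mathcal A)$ is not a consequence of properness of $\widehat{\d}_{H_i}$ on its own; one has to control how often, and how deeply, a $\widetilde\Gamma$-geodesic can re-enter a given coset $gH_i$, which is an isolated-component / bounded-coset-penetration estimate of the sort Lemma~\ref{Omega} encodes. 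Without some version of that estimate, the ``standard thin-triangles argument'' you invoke has no input.

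Part (b) has a more concrete error. For $W'$ of length $\ge 2$, you claim that each $H_i$-letter of $W'$ lying in the identity coset of $H_i$ comes from an $\mathcal A_i$-block of $W$ that starts at some $h_1\in H_i$, avoids the edges of $\Gamma(K_{ij},K_{ij})$, and hence ``by the previous case'' has value $h_1^{-1}h_2$ in a finite subset of $H_i$. That inference is invalid: the relative metric $\widehat{\d}_{K_{ij}}$ in $\Gamma(H_i,\mathcal A_i)$ is based at the identity coset of $K_{ij}$, and if $h_1\notin K_{ij}$ the block can avoid \emph{all} edges of $\Gamma(K_{ij},K_{ij})$ for trivial reasons while $h_1^{-1}h_2$ has arbitrarily large $\widehat{\d}_{K_{ij}}$-length (take $H_i=F(a,b)$, $K_{ij}=\langle a\rangle$, $Y_i=\{b\}$, $h_1=b$, $h_2=ba^n$). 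Only the blocks of $W$ adjacent to the endpoints $1$ and $k\in K_{ij}$ of the path are constrained to bounded $\widehat{\d}_{K_{ij}}$-length in this way; interior blocks in the origin coset of $H_i$, and the structure of what separates them, require a different and more careful argument — which is, in essence, what occupies the bulk of the proof of \cite[Theorem 4.35]{DGO}.
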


Note that the set $Z$ defined by (\ref{Eq:ZX}) is finite whenever so are $X$, $I$, and all $Y_i$. Combining this with Proposition \ref{rhhe}, we obtain the following.

\begin{cor}\label{Cor:rhrh}
Let $G$ be a group hyperbolic relative to a finite collection of subgroups $\Hl$. Suppose that each $H_i$ is hyperbolic relative to a finite collection of subgroups $\{ K_{ij}\}_{j\in J_i}$. Then $G$ is hyperbolic relative to $\bigcup_{i\in I} \{ K_{ij}\}_{j\in J_i}$.
\end{cor}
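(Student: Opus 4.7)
The plan is to combine Proposition \ref{rhhe} with Proposition \ref{trans} directly. Since $G$ is hyperbolic relative to the finite collection $\Hl$, the ``only if'' direction of Proposition \ref{rhhe} gives a finite subset $X\subseteq G$ with $\Hl \h (G,X)$. Similarly, for each $i\in I$, since $H_i$ is hyperbolic relative to the finite collection $\{K_{ij}\}_{j\in J_i}$, applying Proposition \ref{rhhe} inside $H_i$ yields a finite subset $Y_i\subseteq H_i$ with $\{K_{ij}\}_{j\in J_i}\h (H_i, Y_i)$.

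Next, I would feed these data into Proposition \ref{trans}. Its hypotheses are verified verbatim by the previous step, so the conclusion gives
\[
\bigcup_{i\in I}\{K_{ij}\}_{j\in J_i}\h (G,Z),\qquad Z=X\cup\Bigl(\bigcup_{i\in I}Y_i\Bigr).
\]
Because $X$, $I$, and each $Y_i$ are finite, the set $Z$ is finite as well, and the indexing set $\bigsqcup_{i\in I} J_i$ of the new collection is finite too. Thus we have a hyperbolic embedding of a \emph{finite} collection of subgroups with respect to a \emph{finite} relative generating set.

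Finally, to translate back to the language of relative hyperbolicity, I would invoke the ``if'' direction of Proposition \ref{rhhe}, which states that such a hyperbolic embedding is equivalent to $G$ being hyperbolic relative to that finite collection. This yields precisely the desired statement: $G$ is hyperbolic relative to $\bigcup_{i\in I}\{K_{ij}\}_{j\in J_i}$. There is no genuine obstacle here; the only point that requires a moment's attention is confirming that finiteness is preserved at each step (finiteness of the collection, finiteness of the relative generating set), which is immediate from the explicit formula \eqref{Eq:ZX} and the finiteness assumptions on $I$ and each $J_i$.
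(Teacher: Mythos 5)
Your proposal is correct and is essentially the paper's own argument: the paper derives this corollary from Proposition \ref{trans} together with the remark that $Z$ in \eqref{Eq:ZX} is finite when $X$, $I$, and the $Y_i$ are finite, translating to and from relative hyperbolicity via Proposition \ref{rhhe} exactly as you do.
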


Further, let $G$ be a group generated by a finite set $X$. Note that the empty collection of subgroups is hyperbolically embedded in $G$ with respect to $X$ if and only if $G$ is a hyperbolic group. Thus, hyperbolic groups can always be excluded from finite hyperbolically embedded collections. More precisely, we have the following.

\begin{cor}\label{Cor:hypex}
Let $G$ be a group, $\Hl$, $\{ K_j \}_{j\in J}$ two finite collections of subgroups of $G$. Suppose that $\Hl\cup \{ K_j \}_{j\in J}\h (G, X)$ for some $X\subseteq G$. Assume also that each subgroup $H_i$ is hyperbolic and let $Y_i$ denote a finite generating set of $H_i$. Then $\{ K_j \}_{j\in J}\h (G,Z)$, where $Z$ is defined by (\ref{Eq:ZX}).
\end{cor}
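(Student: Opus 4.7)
The plan is to derive Corollary \ref{Cor:hypex} as a direct application of Proposition \ref{trans} to the finite collection $\mathcal{L} := \{H_i\}_{i\in I} \cup \{K_j\}_{j\in J}$, which by hypothesis satisfies $\mathcal{L} \h (G, X)$. For each member $L$ of $\mathcal{L}$ I will specify an inner hyperbolically embedded subcollection of $L$ together with a relative generating set of $L$ in such a way that the conclusion of Proposition \ref{trans} reduces precisely to the desired statement.

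For each $H_i$, I exploit the remark preceding the corollary: since $H_i$ is hyperbolic with finite generating set $Y_i$, the empty collection is hyperbolically embedded in $H_i$ with respect to $Y_i$; that is, $\emptyset \h (H_i, Y_i)$. For each $K_j$, I use Example \ref{bex}(a) to obtain the tautological embedding $\{K_j\} \h (K_j, \emptyset)$.

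Applying Proposition \ref{trans} with these choices (note that all the ambient collections are finite, as required), the resulting collection hyperbolically embedded in $G$ is
\[
\bigcup_{i\in I} \emptyset \;\cup\; \bigcup_{j\in J}\{K_j\} \;=\; \{K_j\}_{j\in J},
\]
and the associated relative generating set, following (\ref{Eq:ZX}), is $X \cup \bigcup_{i\in I} Y_i \cup \bigcup_{j\in J} \emptyset$, which coincides with $Z$. This yields $\{K_j\}_{j\in J} \h (G, Z)$, as required.

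There is no substantive obstacle; the whole argument is essentially bookkeeping built on the equivalence \emph{``$H_i$ is hyperbolic with finite generating set $Y_i$'' $\iff$ ``$\emptyset \h (H_i, Y_i)$''} together with Proposition \ref{trans}. The only point worth checking is that Proposition \ref{trans} is stated with enough generality to permit some of the inner index sets $J_i$ to be empty, which is clear from a direct reading of its hypotheses and conclusion.
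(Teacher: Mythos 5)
Your proof is correct and is essentially identical to the paper's own argument: you apply Proposition \ref{trans} with the empty collection $\emptyset \h (H_i, Y_i)$ for each hyperbolic $H_i$ and the tautological $\{K_j\}\h(K_j,\emptyset)$ from Example \ref{bex}(a), which is precisely what the paper does.
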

\begin{proof}
We apply the proposition to the empty collection of subgroups of each $H_i$ and the subgroups $K_j\h (K_j, \emptyset)$ for $j\in J$.
\end{proof}

Using Proposition \ref{rhhe}, we obtain the following corollary for relatively hyperbolic groups (see \cite[Theorem 2.40]{Osi06} for a direct proof.)

\begin{cor}\label{Cor:rhh}
Let $G$ be a group hyperbolic relative to a finite collection of subgroups $\Hl \cup \{ K_j\}_{j\in J}$. If $H_i$ is hyperbolic for every $i\in I$, then $G$ is hyperbolic relative to $\{ K_j\}_{j\in J}$. In particular, a group hyperbolic relative to a finite collection of hyperbolic subgroups is itself hyperbolic.
\end{cor}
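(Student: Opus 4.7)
The plan is to reduce the statement to Corollary \ref{Cor:hypex} via the equivalence between relative hyperbolicity and hyperbolic embedding given by Proposition \ref{rhhe}. First, applying Proposition \ref{rhhe} to the hypothesis, I obtain a finite subset $X \subseteq G$ such that $\Hl \cup \{K_j\}_{j\in J} \h (G, X)$. Since each $H_i$ is hyperbolic, it admits a finite symmetric generating set $Y_i$, so Corollary \ref{Cor:hypex} applies directly and yields $\{K_j\}_{j\in J} \h (G, Z)$, where $Z = X \cup \bigcup_{i \in I} Y_i$. As $X$, the index set $I$, and each $Y_i$ are finite, the set $Z$ is finite, so Proposition \ref{rhhe} in the reverse direction delivers the desired relative hyperbolicity of $G$ with respect to $\{K_j\}_{j\in J}$.

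For the \emph{in particular} clause, I would specialize to the case $J = \emptyset$, treating $\{K_j\}_{j\in J}$ as the empty collection. The argument above then shows that the empty collection of subgroups is hyperbolically embedded in $G$ with respect to the finite set $Z$, which, as noted in the paragraph preceding Corollary \ref{Cor:hypex}, is precisely the statement that $G$ is a hyperbolic group (generated by $Z$). There is no real obstacle here: the proof is a short bookkeeping deduction from the two results just cited, with the only point requiring care being verification that the new relative generating set $Z$ is finite, which is immediate from the finiteness of $X$, $I$, and the $Y_i$'s.
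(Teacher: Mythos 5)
Your proof is correct and takes essentially the same route the paper indicates, namely translating both hypotheses into the hyperbolic embedding language via Proposition~\ref{rhhe}, applying Corollary~\ref{Cor:hypex} to drop the hyperbolic $H_i$'s while keeping the relative generating set finite, and translating back. The treatment of the \emph{in particular} clause by taking $J=\emptyset$ and invoking the remark preceding Corollary~\ref{Cor:hypex} is also exactly what the paper has in mind.
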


\subsection{Acylindrical hyperbolicity}\label{Sec:AH}

An isometric action of a group $G$ on a metric space $S$ is said to be {\it acylindrical} if, for every $\e>0$, there exist $R,N>0$ such that, for every two points $x,y\in S$ with $\d (x,y)\ge R$, there are at most $N$ elements $g\in G$ satisfying the inequalities
$$
\d(x,gx)\le \e \;\;\; {\rm and}\;\;\; \d(y,gy) \le \e.
$$
Informally, one can think of this condition as a kind of properness of the action on $S\times S$ minus a ``thick diagonal".

We begin with a classification of acylindrical group actions on hyperbolic spaces. Recall that an action of a group $G$ on a hyperbolic space $S$ is \emph{non-elementary} if the limit set of $G$ on the Gromov boundary $\partial S$ has infinitely many points.  The following classification of acylindrical actions is a simplification of {\cite[Theorem 1.1]{Osi16}}.

\begin{thm}\label{Thm:class}
Let $G$ be a group acting acylindrically on a hyperbolic space. Then $G$ satisfies exactly one of the following three conditions.
\begin{enumerate}
\item[(a)] $G$ has bounded orbits.
\item[(b)] $G$ has unbounded orbits and is virtually cyclic.
\item[(c)] The action of $G$ is non-elementary.
\end{enumerate}
\end{thm}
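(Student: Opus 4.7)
The plan is to refine the classical Gromov trichotomy for isometries of a hyperbolic space (elliptic, parabolic, loxodromic) using the acylindricity hypothesis. The first step is to show that acylindricity forbids parabolic elements. If some $g \in G$ were parabolic with unique boundary fixed point $\xi \in \partial S$, then horospherical geometry produces points $x_k, y_k \in S$ with $\d (x_k, y_k) \to \infty$ together with integers $n_k$ for which $\d (x_k, g^{n_k} x_k)$ and $\d (y_k, g^{n_k} y_k)$ are both at most a fixed $\e$. This yields, for every $R$, arbitrarily many elements displacing some pair of points at distance $\ge R$ by at most $\e$, contradicting acylindricity. Hence every element of $G$ is elliptic or loxodromic.

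If $G$ has bounded orbits we are in case (a); assume otherwise. I first argue that $G$ contains a loxodromic element. If every element were elliptic, one can combine the fact that each $\langle g \rangle$ has bounded orbits with a Helly/quasi-center argument on fixed sets in the hyperbolic space together with acylindricity to conclude that all $G$-orbits are bounded, contradicting the assumption. So fix a loxodromic $g \in G$ with fixed pair $\{g^+, g^-\} \subseteq \partial S$ and a quasi-axis $\ell$ of $g$.

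Now I split on whether $G$ setwise stabilizes $\{g^+,g^-\}$. If some $h \in G$ sends this pair off itself, then $g$ and $hgh^{-1}$ are loxodromic with four distinct boundary fixed points, and the standard north--south ping-pong applied to sufficiently high powers produces a rank-two free subgroup of $G$; its orbit on $\partial S$ is a Cantor set, so the limit set is infinite and the action is non-elementary, giving case (c). Otherwise $G$ setwise stabilizes $\{g^+,g^-\}$, and one passes to the index-at-most-$2$ pointwise stabilizer $G_0$. Each $h \in G_0$ coarsely translates $\ell$, defining a quasi-translation $\tau \colon G_0 \to \mathbb R$ (via the signed asymptotic displacement along $\ell$). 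Two applications of acylindricity at points $x,y \in \ell$ with $\d (x,y)$ taken very large show that the set $\{h \in G_0 : |\tau(h)| \le C\}$ is finite for every $C>0$: this bounds $|\Ker \tau|$ and simultaneously forces $\tau(G_0)$ to be a discrete subgroup of $\mathbb R$. Hence $G_0$, and therefore $G$, is virtually cyclic, which is case (b). Mutual exclusivity of (a), (b), (c) is immediate since both (b) and (c) have unbounded orbits, and a virtually cyclic group cannot have an infinite limit set on $\partial S$.

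The main obstacle is the last step, i.e., converting the ``lineal'' case into virtual cyclicity. The parabolic-exclusion argument and the ping-pong dichotomy are geometrically standard once one has Step 1. The delicate part is controlling the coarse geometry of a quasi-axis of $g$ sufficiently well that every element of $G_0$ yields a well-defined translation length, establishing a uniform fellow-traveller bound along this axis for elements of $G_0$, and then running the twin applications of acylindricity that simultaneously produce finiteness of the kernel of $\tau$ and discreteness of its image.
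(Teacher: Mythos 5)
The paper does not prove this statement: it attributes it to \cite[Theorem 1.1]{Osi16} and records it without argument, so there is no in-paper proof to compare against. Your sketch does follow the same overall strategy as Osin's proof (exclude parabolics via acylindricity, then case on the structure of the limit set), so the route is not a different one --- but as written it has two genuine gaps.

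The first and more serious one is in the ping-pong step. You write that if $h\in G$ sends $\{g^+,g^-\}$ off itself, then $g$ and $hgh^{-1}$ have four distinct boundary fixed points. That implication is false: $h\{g^+,g^-\}\neq\{g^+,g^-\}$ allows, for example, $hg^+=g^+$ and $hg^-\neq g^-$, in which case $g$ and $hgh^{-1}$ share the endpoint $g^+$ and there are only three distinct fixed points. This is precisely the focal (quasi-parabolic) configuration in Gromov's classification, and north--south ping-pong applied to loxodromics with a common endpoint does not produce a free group. To close this, you would have to either (i) show that for acylindrical actions the focal case cannot occur --- which is a separate argument in the spirit of the parabolic exclusion, giving arbitrarily many short displacements at far-apart points near the common endpoint --- or (ii) bypass ping-pong entirely: note that the setwise stabilizer $G_0$ of $\{g^+,g^-\}$ is virtually cyclic (your step 5), and if $[G:G_0]<\infty$ then $G$ is virtually cyclic (case (b)), while if $[G:G_0]=\infty$ then $G\cdot g^+\cup G\cdot g^-$ is already infinite, so the limit set is infinite (case (c)). Either fix works, but as stated the dichotomy is wrong.

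The second gap is in the step ``if all elements are elliptic and orbits are unbounded, one runs a Helly/quasi-center argument and derives a contradiction.'' A Helly-type fixed-point argument by itself does not apply here: the quasi-centers of the bounded sets $\mathrm{Fix}_\delta(\langle g\rangle)$ need not lie in a bounded region as $g$ ranges over $G$, and there are horocyclic actions on hyperbolic spaces in which every element is elliptic yet orbits are unbounded. What actually excludes this configuration is acylindricity, by an argument of the same flavor as your parabolic exclusion (the orbit accumulates at a unique end, and along a ray toward that end one finds far-apart pairs displaced by boundedly many group elements within any fixed $\e$, violating the definition). As written, invoking Helly here does not produce a proof, and since this is the step that guarantees a loxodromic exists at all, the whole chain after it rests on an unproved assertion.
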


Thus, an acylindrical action of a group $G$ on a hyperbolic space is non-elementary if and only if $G$ is not virtually cyclic and has infinite orbits. Readers unfamiliar with the notions of Gromov's boundary and the limit set, can accept this as the definition of a non-elementary action.

The following theorem relates hyperbolically embedded subgroups to acylindrical actions. It can be easily derived from results of \cite{Osi16} as explained below.

\begin{thm}\label{Thm:AH}
Let $G$ be a group, $\Hl$ a finite collection of subgroups of $G$ such that $\Hl\h (G,X)$ for some $X\subseteq G$. Let also $\mathcal H$ and $\mathcal A$ be the alphabets defined by (\ref{calA}).
\begin{enumerate}
\item[(a)] There exists $Y\subseteq G$ such that $X\subseteq Y$, $\Hl\h (G,Y)$, and the action of $G$ on $\Gamma (G, \mathcal H \sqcup Y)$ is acylindrical.
\item[(b)] If $|X|<\infty$, then the action of $G$ on $\G$ is acylindrical.
\item[(c)] Suppose that the action of $G$ on $\G$ is acylindrical and there exists $i\in I$ such that $H_i$ is infinite and $H_i\ne G$. Then the action of $G$ on $\G$ is non-elementary.
\end{enumerate}
\end{thm}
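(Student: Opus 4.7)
The plan is to handle the three parts in order, each mirroring a corresponding result from \cite{Osi16}.

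For part (a), I would enlarge $X$ to a set $Y \supseteq X$ by adjoining carefully chosen elements of $G$ so that the action on $\Gamma(G, Y \sqcup \mathcal H)$ becomes acylindrical; this is essentially the content of \cite[Theorem 5.4]{Osi16}. The subtlety is that naively enlarging the relative generating set shortens $\dl$ and may destroy the finite-ball condition of Definition \ref{hedefn}(b). The construction in \cite{Osi16} adds only elements whose ``type'' is controlled by the existing hyperbolic embedding data, thus preserving the finiteness of $\dl$-balls while providing enough extra generators to ensure acylindricity.

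For part (b), Proposition \ref{rhhe} identifies the condition $\Hl \h (G, X)$ with $|X|<\infty$ as equivalent to relative hyperbolicity of $G$ with respect to $\Hl$. A standard fact about relatively hyperbolic groups then says that $G$ acts acylindrically on its relative Cayley graph, which is precisely $\G$ in our notation (this is implicit in \cite{Osi06} and stated explicitly in \cite{Osi16}). An alternative route is to apply part (a) and then observe that for finite $X$ the obvious $G$-equivariant map $\Gamma(G, X \sqcup \mathcal H) \to \Gamma(G, Y \sqcup \mathcal H)$ transfers acylindricity, since enlarging a finite relative generating set by a further set only alters distances in a controlled, acylindricity-preserving way.

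For part (c), I would apply the classification Theorem \ref{Thm:class} to the (already acylindrical) action of $G$ on $\G$ and rule out both elementary cases. Case (b), virtual cyclicity, is excluded by the main characterization theorem for acylindrically hyperbolic groups (see \cite[Theorem 1.2]{Osi16} or \cite{DGO}): the hypothesis supplies a proper infinite hyperbolically embedded subgroup, hence $G$ is acylindrically hyperbolic and in particular not virtually cyclic. To rule out case (a), bounded orbits, I would exhibit an element of $G$ that acts loxodromically on $\G$: pick $g\in G\setminus H_i$ (possible since $H_i\ne G$) and $h\in H_i$ with $\dl(1,h)$ very large (possible by Definition \ref{hedefn}(b), since $H_i$ is infinite and $\dl$-balls are finite), and show that $gh$ acts loxodromically on $\G$. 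Since a loxodromic element has unbounded orbits, case (a) is impossible and only case (c) remains.

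The principal obstacle is the ping-pong-style argument showing that $gh$ is loxodromic, equivalently that $\d_{\mathcal A}(1,(gh)^n)$ grows linearly in $n$. This requires controlling how powers of $gh$ alternate between excursions into and out of the coset $H_i$, and uses the malnormality properties from Proposition \ref{Prop:maln}, together with the geodesic polygon estimates in Lemmas \ref{Lem:HRect} and \ref{N123}, to forbid substantial cancellation between the consecutive copies of $g$ and of $h$ along a representing word. Once this is in place, linear growth of $|(gh)^n|_{\mathcal A}$ is immediate and the action is forced into case (c) of the classification.
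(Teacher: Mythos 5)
Your proposal follows essentially the same structure as the paper. Parts (a) and (b) are handled identically: you cite \cite[Theorem 5.4]{Osi16} for (a), and for (b) you route through Proposition \ref{rhhe} to relative hyperbolicity and then invoke acylindricity of the relative Cayley graph, which is exactly \cite[Proposition 5.2]{Osi16} as quoted in the paper. One caveat on (b): the ``alternative route'' you suggest --- deducing acylindricity of the action on $\G$ from acylindricity of the action on $\Gamma(G,\mathcal H\sqcup Y)$ provided by part (a) --- is not sound in general. Removing the (possibly infinitely many) generators in $Y\setminus X$ need not be a quasi-isometry, and acylindricity is not preserved under arbitrary changes of generating alphabet; indeed the paper explicitly warns that the action on $\G$ can fail to be acylindrical when $X$ is infinite. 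Your primary route for (b) is the correct one and you should discard the alternative.

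For part (c) your outline agrees with the paper's --- apply Theorem \ref{Thm:class} and rule out both elementary cases --- but you dismiss each case by a different means. To exclude virtual cyclicity you pass through condition (A$_3$) of Theorem \ref{Thm:ah}, concluding that $G$ is acylindrically hyperbolic and hence not virtually cyclic; the paper instead cites \cite[Theorem 2.24(c)]{DGO} to produce a non-cyclic free subgroup. Both work (your route tacitly uses the fact that $H_i$ alone is hyperbolically embedded, which is true but worth flagging). To exclude bounded orbits the paper simply cites \cite[Theorem 6.11]{DGO}, which directly produces an element of $G$ acting on $\G$ with unbounded orbits, using exactly the malnormality input $|a^{-1}H_ia\cap H_i|<\infty$ you identify. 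You instead propose to build a loxodromic element $gh$ by hand, with $g\in G\setminus H_i$ and $h\in H_i$ far from $1$ in $\dl$, and verify loxodromicity by a ping-pong/geodesic-polygon argument. That is precisely the content of the DGO theorem being cited (and is close in spirit to Lemma \ref{Wgeod} developed later in this paper), so your plan is essentially a reproof where a citation suffices. The sketch is plausible and names the right technical ingredients (Proposition \ref{Prop:maln}, Lemmas \ref{Lem:HRect} and \ref{N123}), but carrying it through would be substantial additional work, and as written it is only an outline of that argument rather than a proof. The paper's deference to \cite{DGO} is the more economical choice.
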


\begin{proof}
We explain how to derive (a)--(c) from results in the literature. Part (a) is \cite[Theorem 5.4]{Osi16}. It is worth noting that enlarging the relative generating set in part (a) is necessary as the action of $G$ on $\G$ may not be acylindrical if $X$ is infinite (see the discussion before Theorem 5.4 in \cite{Osi16}).

By Proposition \ref{rhhe}, $G$ is hyperbolic relative to $\Hl$ if $|X|<\infty$. This allows us to apply \cite[Proposition 5.2]{Osi16} which gives us (b).

Finally, part (c) can be proved as follows. Suppose that $H_i$ is infinite and $H_i\ne G$ for some $i\in I$. Let $a\in G\setminus H_i$. By Proposition \ref{Prop:maln}, we have $|a^{-1}H_ia\cap H_i|<\infty$. The existence of such an element $a$ and the assumption $|H_i|=\infty $ allow us to apply \cite[Theorem 6.11]{DGO}, which implies that the group $G$ contains an element acting on $\G$ with unbounded orbits. Furthermore, the assumptions $|H_i|=\infty $ and $H_i\ne G$ imply that $G$ contains a non-cyclic free subgroup by \cite[Theorem 2.24 (c)]{DGO}). In particular, $G$ is not virtually cyclic. Therefore, the action of $G$ on $\G$ is non-elementary by Theorem \ref{Thm:class}.
\end{proof}

Every group has an acylindrical action on a hyperbolic space, namely the trivial action on the point. For this reason, we want to avoid elementary actions in the definition below.

\begin{defn}\label{ahdef}
A group $G$ is \emph{acylindrically hyperbolic} if admits a non-elementary acylindrical action on a hyperbolic space.
\end{defn}

We mention some equivalent characterizations.

\begin{thm}[{\cite[Theorem 1.2]{Osi16}}]\label{Thm:ah}
For any group $G$, the following conditions are equivalent.
\begin{enumerate}
\item[(A$_1$)] $G$ is acylindrically hyperbolic.
\item[(A$_2$)] There exists a (possibly infinite) generating set $X$ of $G$ such that the Cayley graph $\Gamma (G,X)$ is hyperbolic and the action of $G$ on $\Gamma (G,X)$ is non-elementary and acylindrical.
\item[(A$_3$)] $G$ contains a proper infinite hyperbolically embedded subgroup.
\end{enumerate}
\end{thm}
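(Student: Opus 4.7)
The plan is to prove the cycle (A$_1$) $\Rightarrow$ (A$_3$) $\Rightarrow$ (A$_2$) $\Rightarrow$ (A$_1$), which sidesteps any direct construction of a Cayley-graph action from an abstract acylindrical action. The implication (A$_2$) $\Rightarrow$ (A$_1$) is immediate from Definition \ref{ahdef}, since $G$ acts isometrically on the hyperbolic space $\Gamma(G,X)$ and this action is non-elementary and acylindrical by hypothesis. For (A$_3$) $\Rightarrow$ (A$_2$), suppose $H\le G$ is proper and infinite with $\{H\}\hookrightarrow_h (G,X_0)$ for some $X_0\subseteq G$. By Theorem \ref{Thm:AH}(a), I may enlarge $X_0$ to $Y\supseteq X_0$ so that $\{H\}\hookrightarrow_h (G,Y)$ and the action of $G$ on $\Gamma(G, Y\sqcup H)$ is acylindrical. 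This Cayley graph is hyperbolic by the definition of hyperbolic embedding, and Theorem \ref{Thm:AH}(c) gives non-elementarity because $H$ is both infinite and proper; taking $X$ to be the image of $Y\sqcup H$ in $G$ (collapsing parallel edges changes neither the metric nor acylindricity) yields (A$_2$).

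The main work is (A$_1$) $\Rightarrow$ (A$_3$). Starting from a non-elementary acylindrical action of $G$ on a hyperbolic space $S$, the first step is to extract a loxodromic element $g\in G$: Theorem \ref{Thm:class} ensures that $G$ has unbounded orbits and is not virtually cyclic, and a standard North--South dynamics / ping-pong argument using the infinite limit set on $\partial S$ produces two independent loxodromic elements. The second step is to define $E(g)$ as the set-stabilizer of the pair of fixed points $\{g^{+\infty}, g^{-\infty}\}\subseteq \partial S$; under acylindricity, $E(g)$ is the unique maximal virtually cyclic subgroup of $G$ containing $g$, hence is infinite and (since $G$ is not virtually cyclic) proper. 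This is the natural candidate for a proper infinite hyperbolically embedded subgroup.

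The third and crucial step is to exhibit a relative generating set $X\subseteq G$ such that $\{E(g)\}\hookrightarrow_h (G,X)$, and this is the main obstacle. The natural choice is to fix a basepoint $s_0\in S$ and take $X$ to consist of elements $h\in G\setminus E(g)$ with displacement $\d(s_0, hs_0)$ bounded by a constant depending on the translation length $\tau(g)$ and the acylindricity parameters, together with a set of coset representatives. The key technical point, equivalent to the statement that loxodromic elements for acylindrical actions are automatically weakly properly discontinuous, is that any path in $\Gamma(G, X\sqcup E(g))$ connecting two vertices of $E(g)$ while avoiding edges of $\Gamma(E(g), E(g))$ projects via the orbit map $h\mapsto hs_0$ to a quasi-geodesic in $S$ whose length is comparable to $\d(s_0, hs_0)$. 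This has two consequences: the induced metric on $E(g)$ coming from the relative distance $\widehat{\d}_{E(g)}$ is comparable to the translation distance in $S$, so acylindricity forces the balls $\{h\in E(g):\widehat{\d}_{E(g)}(1,h)\le n\}$ to be finite, verifying condition (b) of Definition \ref{hedefn}; and Morse-type stability of quasi-geodesics in $S$ transfers hyperbolicity back to $\Gamma(G, X\sqcup E(g))$, verifying condition (a). The technical heart of the argument is establishing the quasi-geodesicity of the projection, which requires a careful bound on how often the path can return close to the translates of the axis of $g$ in $S$; this is where all the subtlety of the acylindricity hypothesis is used.
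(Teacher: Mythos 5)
The paper does not prove this result: it is quoted verbatim from \cite[Theorem 1.2]{Osi16}, so there is no internal proof to compare against. I will therefore assess your argument on its own terms, using Osin's original proof and \cite{DGO} as the reference.

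Your cycle $(A_1)\Rightarrow(A_3)\Rightarrow(A_2)\Rightarrow(A_1)$ follows the same architecture as \cite{Osi16}. The implication $(A_2)\Rightarrow(A_1)$ is indeed immediate from Definition \ref{ahdef}, and your proof of $(A_3)\Rightarrow(A_2)$ is correct: enlarge the relative generating set via Theorem \ref{Thm:AH}(a), obtain non-elementarity from Theorem \ref{Thm:AH}(c) because $H$ is infinite and proper, and replace the alphabet $Y\sqcup H$ by its image in $G$ (collapsing multi-edges and loops does not change the vertex metric, hence neither hyperbolicity nor acylindricity nor the limit set).

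The implication $(A_1)\Rightarrow(A_3)$, which you rightly flag as the hard one, is not actually proved; it is a plan with a conceded gap, and one of the gap-framings is wrong. The claim that you need --- that for a loxodromic element $g$ of an acylindrical action $E(g)\hookrightarrow_h G$ --- is precisely \cite[Theorem 6.8]{DGO}, a substantial theorem whose proof occupies a large part of that memoir; it is not something that can be conjured in a paragraph by ``taking displacement-bounded elements as $X$.'' Moreover, your sentence asserting that the quasi-geodesicity of projected detour paths is ``equivalent to the statement that loxodromic elements for acylindrical actions are automatically weakly properly discontinuous'' is false: WPD is a pointwise finiteness condition on the single element $g$ and follows from acylindricity in two lines, whereas the path-projection statement encodes the full strength of hyperbolic embedding and is the conclusion, not a reformulation, of \cite[Theorem 6.8]{DGO}. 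Conflating these two makes the gap appear smaller than it is. There is also a smaller issue at the start of this step: the paper's Theorem \ref{Thm:class} is an explicitly simplified statement and does not assert the existence of loxodromic elements in the non-elementary case; you need Bowditch's result that an acylindrical action with unbounded orbits has a loxodromic element (\cite[Lemma 2.2]{Bow}, quoted in Definition \ref{Def:lox}), and ``North--South / ping-pong'' dynamics is not a substitute for this because the action need not be proper. The honest route for $(A_1)\Rightarrow(A_3)$ in the spirit of this paper is: extract a loxodromic element $g$ via Bowditch, observe WPD, invoke \cite[Theorem 6.8]{DGO} for $E(g)\hookrightarrow_h G$, and note that $E(g)$ is infinite (it contains $g$) and proper (because Theorem \ref{Thm:class} rules out $G$ being virtually cyclic). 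As written, your argument outsources this without citation and without an accurate description of what is being outsourced.
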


The class of acylindrically hyperbolic groups includes all non-elementary hyperbolic and relatively hyperbolic groups, mapping class groups of closed surfaces of non-zero genus, ${\rm Out}(F_n)$ for $n\ge 2$, non-virtually cyclic groups acting properly on proper $CAT(0)$ spaces and containing a rank-$1$ element, groups of deficiency at least $2$, most $3$-manifold groups, automorphism groups of some algebras (e.g., the Cremona group of birational transformations of the complex projective plane) and many other examples. For more details we refer to the survey \cite{Osi18}.

The next result will be used several times in our paper. It is a simplification of \cite[Lemma 7.1]{Osi16}.

\begin{lem}\label{Lem:NormAH}
Let $G$ be a group acting acylindrically and non-elementarily on a hyperbolic space $S$. For any infinite normal subgroup $N\lhd G$, the induced action of $N$ on $S$ is non-elementary; in particular, $N$ is acylindrically hyperbolic.
\end{lem}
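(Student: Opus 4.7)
The plan is to invoke the classification of acylindrical actions on hyperbolic spaces (Theorem~\ref{Thm:class}) for the restriction of the $G$-action to $N$ and rule out the two elementary possibilities using normality of $N$ combined with non-elementarity of the ambient $G$-action. The restricted $N$-action on $S$ is acylindrical with the same constants, so one of the three alternatives of Theorem~\ref{Thm:class} holds; our task is to eliminate (a) and (b).

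First I would rule out the elliptic case: suppose $\sup_{n\in N}\d(x,nx)\le D$ for some $x\in S$. Since $G$ acts non-elementarily, $Gx$ is unbounded, so there exists $g\in G$ with $\d(x,gx)\ge R$, where $R=R(D)$ is the acylindricity constant corresponding to $\e=D$. Setting $y=gx$, normality of $N$ gives $g^{-1}ng\in N$ for every $n\in N$, whence
\[
\d(y,ny)=\d(gx,ngx)=\d(x,g^{-1}ngx)\le D.
\]
Thus every element of $N$ moves both $x$ and $y$ by at most $D$, so acylindricity forces $|N|\le N(D)<\infty$, contradicting the hypothesis that $N$ is infinite.

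Next I would rule out the virtually cyclic case with unbounded orbits. An acylindrical action of a virtually cyclic group with unbounded orbits necessarily contains a loxodromic element; let $n\in N$ be such an element, with fixed point pair $\{a,b\}\subset\partial S$. Since $N$ is virtually cyclic, every loxodromic element of $N$ has the same fixed pair $\{a,b\}$. For any $g\in G$, the element $gng^{-1}$ lies in $N$ by normality and is loxodromic with fixed points $\{ga,gb\}$; hence $\{ga,gb\}=\{a,b\}$. Consequently the limit set of $G$ on $\partial S$ is contained in the two-point set $\{a,b\}$, contradicting non-elementarity of the $G$-action.

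Only alternative (c) of Theorem~\ref{Thm:class} remains, so the $N$-action on $S$ is non-elementary and acylindrical, and Definition~\ref{ahdef} yields that $N$ is acylindrically hyperbolic. The main point requiring care is the step in the virtually cyclic case asserting the existence of a loxodromic element; this is the one input beyond bare normality that I would want to cite from the classification of acylindrical actions. The rest is a clean application of the fact that conjugation by $G$ permutes $N$.
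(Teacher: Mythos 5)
The paper itself gives no proof of this lemma; it simply records it as a simplification of \cite[Lemma 7.1]{Osi16}. So there is no in-house argument to compare against, and you have in effect supplied the missing proof.

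Your argument is sound and follows the natural route: restrict the action to $N$ (still acylindrical with the same constants), invoke Theorem~\ref{Thm:class}, and use normality together with non-elementarity of the ambient $G$-action to eliminate the two elementary cases. The elliptic case is handled cleanly and correctly: a bounded $N$-orbit at $x$ plus normality gives a bounded $N$-orbit at $gx$ for any $g$, and choosing $g$ with $\d(x,gx)\ge R(D)$ pins $|N|\le N(D)$, contradicting $|N|=\infty$. In the virtually cyclic case, the existence of a loxodromic $n\in N$ does follow from case (b) of Theorem~\ref{Thm:class} together with Definition~\ref{Def:lox} (a finite-index infinite cyclic subgroup of $N$ has unbounded orbits, so its generator is loxodromic), and the observation that all loxodromics in a virtually cyclic group share a fixed pair is correct (finite-index cyclic subgroups have infinite intersection). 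The only place that deserves a word more of justification is the concluding sentence, where $G$ stabilizing $\{a,b\}$ setwise is converted into ``$\Lambda(G)\subseteq\{a,b\}$.'' This is true, but it uses one more ingredient than you state: since conjugation preserves translation length, all the axes of the conjugates $gng^{-1}$ are quasi-geodesics from $a$ to $b$ with \emph{uniform} constants, so by stability of quasi-geodesics the whole orbit $Gx$ of a basepoint on the axis of $n$ stays within a bounded neighbourhood of that axis, forcing $\Lambda(G)\subseteq\{a,b\}$. Alternatively one can argue purely algebraically that every $g\in G$ commensurates $\langle n\rangle$, hence $g\in E(n)$ by the standard characterization behind Lemma~\ref{Lem:Eg}, so $G=E(n)$ is virtually cyclic, contradicting non-elementarity via Theorem~\ref{Thm:class}. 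Either fix is short; with it, the proof is complete.
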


Sizes of finite normal subgroups in every acylindrically hyperbolic group are uniformly bounded. Moreover, we have the following.

\begin{defn}\label{Def:FR}
Every acylindrically hyperbolic group contains a unique maximal finite normal subgroup (see the first part of \cite[Theorem 2.24]{DGO}). We denote it by $K(G)$ and call it the \emph{finite radical} of $G$.
\end{defn}

\begin{thm}[{\cite[Theorem 2.35]{DGO}}]\label{Thm:HypICC}
An acylindrically hyperbolic group $G$ is ICC if and only if $K(G)=\{ 1\}$.
\end{thm}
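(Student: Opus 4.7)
I would prove the two implications separately. The forward direction ($G$ ICC $\Rightarrow$ $K(G)=\{1\}$) is immediate: any non-trivial $k\in K(G)$ has $k^G\subseteq K(G)$, which is finite, contradicting the ICC property.

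For the reverse direction, suppose $g\in G\setminus\{1\}$ has finite conjugacy class $g^G$; the goal is to produce a non-trivial finite normal subgroup of $G$, the natural candidate being $F=\langle g^G\rangle$. I would first introduce $N=\bigcap_{h\in G}hC_G(g)h^{-1}$, the kernel of the conjugation action of $G$ on the finite set $g^G$. Then $N$ is a normal subgroup of $G$ of finite index, and since $N$ is normal in $G$ and contained in $C_G(g)$, the relation $h_0^{-1}nh_0\in N\subseteq C_G(g)$ for any $n\in N$ shows that every element $h_0gh_0^{-1}$ of $g^G$ centralizes $N$. In particular $F\subseteq C_G(N)$, so $F\cap N\subseteq Z(N)$.

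The crux is to show $Z(N)$ is finite. As a finite-index subgroup of $G$, the group $N$ inherits a non-elementary acylindrical action on the same hyperbolic space $S$ on which $G$ acts, and hence is itself acylindrically hyperbolic. If $Z(N)$ were infinite, then Lemma~\ref{Lem:NormAH} applied to the action of $N$ on $S$ would force the restricted action of $Z(N)$ on $S$ to be non-elementary, so $Z(N)$ would itself be acylindrically hyperbolic. However, every acylindrically hyperbolic group contains a non-abelian free subgroup (precisely the fact invoked in the proof of Theorem~\ref{Thm:AH}(c)), contradicting the abelianness of $Z(N)$.

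To conclude, since $F\cap N\subseteq Z(N)$ is finite and $[F:F\cap N]\le[G:N]<\infty$, the normal subgroup $F=\langle g^G\rangle$ of $G$ is itself finite. As a non-trivial finite normal subgroup, it is contained in $K(G)$, which shows $K(G)\ne\{1\}$. The main obstacle is the finiteness of $Z(N)$, which hinges on the non-abelianness of acylindrically hyperbolic groups via Lemma~\ref{Lem:NormAH}; the remaining steps are routine group theory combined with the passage to a finite-index normal subgroup.
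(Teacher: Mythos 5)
The paper does not prove this statement; it is quoted directly from \cite[Theorem 2.35]{DGO}, so there is no in-paper proof to compare against. Your argument is correct and self-contained using only tools available in the paper. The forward implication is routine. For the converse, assuming $g\neq 1$ has finite conjugacy class, you pass to the finite-index normal core $N=\bigcap_{h\in G}hC_G(g)h^{-1}$, note that $N$ centralizes $g^G$ so that $F=\langle g^G\rangle$ satisfies $F\cap N\subseteq Z(N)$, and then kill off $Z(N)$: since $N$ is an infinite normal subgroup of $G$, Lemma~\ref{Lem:NormAH} makes $N$ act acylindrically and non-elementarily on $S$, and if $Z(N)$ (which is characteristic, hence normal, in $N$) were infinite, another application of Lemma~\ref{Lem:NormAH} would make $Z(N)$ acylindrically hyperbolic and hence contain a non-abelian free group, contradicting abelianness. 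Finiteness of $Z(N)$ together with $[F:F\cap N]\le[G:N]<\infty$ gives that $F$ is a non-trivial finite normal subgroup, so $K(G)\neq\{1\}$. Every step is sound, and the argument is arguably more elementary than the one in \cite{DGO}, which goes through the structure theory of elliptic and loxodromic elements; your version only relies on the quoted Lemma~\ref{Lem:NormAH} and the freeness statement already invoked in the proof of Theorem~\ref{Thm:AH}(c).
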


We conclude this section with the definition of a loxodromic element and examples of hyperbolically embedded subgroups, which will be used many times in our paper.

\begin{defn}\label{Def:lox}
An element $g$ of a group $G$ acting on a hyperbolic space $S$ is called \emph{loxodromic} if it acts as a translation along a bi-infinite quasi-geodesic in $S$. If the action of $G$ on $S$ is acylindrical, this is equivalent to the requirement that $\langle g\rangle$ has unbounded orbits (see \cite[Lemma 2.2]{Bow}).
\end{defn}

If $G$ is hyperbolic relative to a finite collection $\Hl$ and $X$ is a finite relative generating set of $G$ with respect to $\Hl$, then the action of $G$ on $\Gamma (G, \mathcal A)$ (where $\mathcal A$ is defined by (\ref{calA})) is acylindrical by Proposition \ref{rhhe} and part (b) of Theorem \ref{Thm:AH}. In these settings, an element $g\in G$ is loxodromic if and only if $g$ has infinite order and is not conjugate to an element of one of the peripheral subgroups (see \cite[Theorem 4.23]{Osi06}). Thanks to this characterization, whether or not an element of a relatively hyperbolic group $G$ is loxodromic is independent of the choice of a particular finite relative generating set of $G$. Thus we can simply talk about loxodromic elements of a relatively hyperbolic group if the peripheral collection is understood.

If $G$ is hyperbolic, it is hyperbolic relative to the trivial subgroup and every infinite order element is loxodromic with respect to this collection. For more details on loxodromic elements, we refer the reader to Section 6 of \cite{Osi16}.

The following is proved in \cite[Lemma 6.5]{DGO}.

\begin{lem}\label{Lem:Eg}
Suppose that a group $G$ acts acylindrically on a hyperbolic space $S$. Then every loxodromic element $g\in G$ is contained in a unique maximal virtually cyclic subgroup of $G$.
\end{lem}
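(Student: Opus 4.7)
The plan is to construct the desired subgroup explicitly as the elementary closure
\[
E(g) = \{h \in G : \exists\, n \geq 1 \text{ with } h g^n h^{-1} = g^{\pm n}\},
\]
which, under an acylindrical action, can equivalently be described as the setwise stabilizer in $G$ of the pair of limit points $\{g^+, g^-\} \subset \partial S$ of the loxodromic element $g$. I would first verify, via direct manipulation of the defining relation (using that $g$ has infinite order, a consequence of loxodromicity under acylindrical actions), that $E(g)$ is a subgroup of $G$ containing $\langle g \rangle$.

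The main work is to show that $E(g)$ is virtually cyclic. Let $E^+(g) \leq E(g)$ be the index at most $2$ subgroup of orientation-preserving elements, i.e.\ those satisfying $hg^n h^{-1} = g^n$ for some $n \geq 1$. I would define the stable translation length homomorphism $\tau: E^+(g) \to \mathbb{R}$ via $\tau(h) = \lim_{k \to \infty} \d(x, h^k x)/k$ for any basepoint $x$ on a quasi-axis $\gamma$ of $g$. The key claim is that $\ker \tau$ is finite and $\tau(E^+(g))$ is a discrete (hence cyclic) subgroup of $\mathbb R$. For finiteness of the kernel, any $h \in \ker \tau$ fixes both endpoints $g^\pm$ and has zero asymptotic translation length, which via a standard Morse-type argument forces $h$ to displace each point of $\gamma$ by a uniformly bounded amount $C$ depending only on $\delta$ and the quasi-geodesic constants of $\gamma$; choosing two points $x,y \in \gamma$ with $\d(x,y) \geq R(C)$ where $R$ is the acylindricity constant for $\varepsilon = C$, the acylindricity bound gives $|\ker \tau| \leq N(C)$. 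For discreteness of the image, any sequence $h_k \in E^+(g)$ with $\tau(h_k) \to 0$ would eventually yield infinitely many distinct elements displacing a long pair $(x,y) \in \gamma$ by a uniformly bounded amount, again contradicting acylindricity. Hence $E^+(g)$, and therefore $E(g)$, is virtually cyclic.

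For maximality, suppose $H \leq G$ is any virtually cyclic subgroup containing $g$. Since $\langle g \rangle$ is infinite cyclic and $H$ is virtually cyclic, $\langle g \rangle$ has finite index in $H$, so for each $h \in H$ the intersection $\langle g \rangle \cap h\langle g \rangle h^{-1}$ has finite index in both copies. Thus there exists $n \geq 1$ and $m \in \mathbb{Z}$ with $hg^n h^{-1} = g^m$. Comparing stable translation lengths under the action on $S$ yields $|m| = |n|$, so $m = \pm n$ and $h \in E(g)$. This proves $H \subseteq E(g)$, establishing both uniqueness and maximality.

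The main obstacle is the second paragraph: one has to handle the fact that $g$ need not possess a genuine invariant geodesic, only a bi-infinite quasi-geodesic quasi-axis, so the statements ``$h$ fixes the endpoints'' and ``$\tau(h) = 0$'' do not literally force $h$ to fix a point of $\gamma$. The workaround is to establish, via a quadrilateral argument in the spirit of Lemma~\ref{Lem:HRect}, that any such $h$ fellow-travels $\gamma$ with a uniform additive constant, at which point acylindricity supplies the two crucial finiteness conclusions cleanly.
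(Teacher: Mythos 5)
The paper does not give its own proof of this lemma; it is quoted from Dahmani--Guirardel--Osin \cite[Lemma 6.5]{DGO}, so there is no in-text argument to compare against and your proposal must be judged on its own. Your definition of $E(g)$ as the commensurator of $\langle g\rangle$, the verification that it is a subgroup, and the maximality argument via comparison of stable translation lengths are all correct and standard, and acylindricity is indeed the engine behind finiteness of the elliptic part. The genuine gap is in the middle step: the function $\tau(h)=\lim_k \d(x,h^kx)/k$ is non-negative, conjugation-invariant, and satisfies $\tau(h^{-1})=\tau(h)$, so it is \emph{not} a homomorphism (a homomorphism to $\mathbb R$ with this symmetry would be identically zero). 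Consequently $\ker\tau=\{\tau=0\}$ is not a priori a subgroup and $\tau(E^+(g))$ is not a ``discrete, hence cyclic, subgroup of $\mathbb R$.'' Replacing $\tau$ by the signed translation amount along $\gamma$ does not immediately rescue this either, because $\gamma$ is only a bi-infinite quasi-geodesic, so the signed translation is a priori only a quasimorphism; promoting it to an honest homomorphism with finite kernel is essentially the content of the lemma.

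The fix is to drop the homomorphism language. One option: your own displacement estimate shows that for every $h\in E^+(g)$ there is $k\in\ZZ$ with $g^{-k}h$ moving a fixed long pair on $\gamma$ by a uniformly bounded amount; acylindricity then confines $g^{-k}h$ to a finite set $\Phi$, giving $[E^+(g):\langle g\rangle]\le |\Phi|<\infty$ directly, with no appeal to discreteness of an image in $\mathbb R$. A shorter route, given what is already recorded in the paper, is to invoke the classification of acylindrical actions (Theorem~\ref{Thm:class}): every $h\in E(g)$ commensurates $\langle g\rangle$ and hence preserves the pair $\{g^+,g^-\}$; Morse stability bounds the Hausdorff distance between $h\gamma$ and $\gamma$ uniformly in $h$, so an $E(g)$-orbit lies in a bounded neighborhood of $\gamma$ and the limit set of $E(g)$ is exactly $\{g^+,g^-\}$. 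The restricted action of $E(g)$ on $S$ is therefore not non-elementary, while $g\in E(g)$ gives unbounded orbits, so option (b) of the trichotomy yields that $E(g)$ is virtually cyclic.
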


\begin{defn}
In the settings of Lemma \ref{Lem:Eg}, we denote the unique maximal virtually cyclic subgroup of $G$ containing $g$ by $E(g)$.
\end{defn}

Recall also that two elements $a$, $b$ of a group $G$ are said to be \emph{commensurable} if some non-zero powers of $a$ and $b$ are conjugate in $G$. The following theorem was proved in \cite[Corollary 3.12]{AMS} (it is worth noting that the WPD condition assumed there follows immediately from acylindricity of the action). Similar but less precise statements were also proved in \cite{DGO,Hull}.

\begin{thm}[Antolin--Minasyan--Sisto]\label{Thm:Eg}
Let $G$ be a group, $\Hl$ a finite collection of subgroups of $G$ such that $\Hl\h (G,X)$ for some $X\subseteq G$. Let also $\mathcal A$ be the alphabet defined by (\ref{calA}). Suppose that the action of $G$ on $\G$ is acylindrical. Then for any collection of pairwise non-commensurable loxodromic (with respect to the action on $\Gamma (G,A)$) elements $g_1, \ldots, g_n\in G$, we have $\{ E(g_1), \ldots , E(g_n)\}\cup \Hl \h (G, X)$.
\end{thm}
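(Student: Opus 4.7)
The plan is to verify directly the two conditions of Definition~\ref{hedefn} for the enlarged alphabet $\mathcal A'=X\sqcup \mathcal H\sqcup\bigsqcup_{i=1}^n E(g_i)$. The geometric setup is as follows: since each $g_i$ is loxodromic for the acylindrical action of $G$ on $\G$, it acts by translation along a bi-infinite $(\lambda,c)$-quasi-geodesic axis $\gamma_i\subset \G$ with constants that can be chosen uniform in $i$, so the orbit map $E(g_i)\to \G$ is a quasi-isometric embedding and each left coset $uE(g_i)$ is a uniformly quasi-convex subset of $\G$. The Cayley graph $\Gamma(G,\mathcal A')$ is obtained from $\G$ by coning off the family $\mathcal F=\{uE(g_i):u\in G,\; 1\le i\le n\}$.

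The geometric heart of the proof is a bounded coarse intersection property: for every $D\ge 0$ there is $R=R(D)$ such that the $D$-neighborhoods in $\G$ of any two distinct members of $\mathcal F\cup\{uH_k:u\in G,\;k\in I\}$ intersect in a subset of $\G$-diameter at most $R$. For two $E$-cosets this uses non-commensurability of the $g_i$ together with acylindricity of the action on $\G$: a long fellow-traveling of $\gamma_i$ and $g\gamma_j$ would force an element of $G$ to conjugate a nontrivial power of $g_i$ into $E(g_j)$, contradicting either non-commensurability (when $i\ne j$) or the maximality of $E(g_i)$ among virtually cyclic subgroups of $G$ (when $i=j$ and $g\notin E(g_i)$). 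The mixed case of an $E(g_i)$-coset with an $H_k$-coset reduces to Proposition~\ref{Prop:maln} applied to a suitable nontrivial power of $g_i$, which cannot have infinitely many common fixed points with an $H_k$-coset.

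Granted this input, condition~(a) of Definition~\ref{hedefn}---hyperbolicity of $\Gamma(G,\mathcal A')$---follows from the general principle that coning off a uniformly quasi-convex family of subspaces with bounded coarse intersection inside a hyperbolic graph produces a hyperbolic graph. One can prove this directly via a thin-quadrilateral argument in the spirit of Lemma~\ref{Lem:HRect}, or invoke the projection complex technology developed by Bestvina--Bromberg--Fujiwara and used in~\cite{AMS}.

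For condition~(b) applied to $E(g_i)$, consider $h\in E(g_i)$ with $\widehat{d}_{E(g_i)}(1,h)\le R$. A witnessing path in $\Gamma(G,\mathcal A')$ avoiding $\Gamma(E(g_i),E(g_i))$-edges can be unfolded into a path in $\G$ of length controlled by $R$ by replacing each $E(g_j)$-edge ($j\ne i$) by a quasi-geodesic along a translate of $\gamma_j$; the bounded coarse intersection property then prevents the unfolded path from shadowing $\gamma_i$ for more than a uniformly bounded distance at any stage, so $h$ is forced to lie within bounded $\G$-distance of a bounded subsegment of $\gamma_i$. Since $E(g_i)$ is finitely generated and its orbit map is a quasi-isometric embedding into $\G$, this leaves only finitely many possibilities for $h$. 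A parallel argument, requiring additional care to avoid reintroducing $H_k$-edges during the unfolding, handles~(b) for each $H_k$. The principal obstacle is the bounded coarse intersection statement with effective constants---this is the technical core of the result and is precisely where non-commensurability of the $g_i$ and acylindricity of the action are used together.
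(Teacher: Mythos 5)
The paper does not actually prove this theorem: it is stated as a citation to \cite[Corollary 3.12]{AMS}, and the only additional remark is that the WPD hypothesis appearing there follows from acylindricity of the action. So there is no in-paper argument to compare against, and what you have written is, in effect, an attempted re-derivation of the cited result. Your overall blueprint -- cone off a family of uniformly quasi-convex, pairwise coarsely transverse cosets and verify the two conditions of Definition~\ref{hedefn} directly, invoking Bestvina--Bromberg--Fujiwara projection-complex machinery for hyperbolicity -- is indeed the route taken in~\cite{AMS}, so the approach is sound in outline. But as a proof it has real gaps.

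The most concrete error is in the mixed case of your bounded coarse intersection claim. You say the case of an $E(g_i)$-coset against an $H_k$-coset ``reduces to Proposition~\ref{Prop:maln} applied to a suitable nontrivial power of $g_i$.'' Proposition~\ref{Prop:maln} concerns intersections of conjugates of the $H_i$ with one another; it says nothing about $E(g_i)\cap g^{-1}H_k g$, and $E(g_i)$ is not a member of the hyperbolically embedded collection, so that proposition simply does not apply. The fact you actually need is that a loxodromic element is not conjugate into any $H_k$ (together with the Morse property of its axis), from which an unboundedly long shadowing of an $H_k$-coset by the axis $\gamma_i$ would place a power of $g_i$ inside a conjugate of $H_k$, a contradiction. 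The phrase ``cannot have infinitely many common fixed points with an $H_k$-coset'' is not the right picture here. Beyond this, the verification of condition~(b) of Definition~\ref{hedefn} -- which you describe via ``unfolding'' a path in $\Gamma(G,\mathcal A')$ back into $\G$ and invoking bounded coarse intersection at each stage -- is the genuine technical heart of the argument and is asserted rather than carried out; this is precisely the content that~\cite{AMS} establish carefully. As written, your proposal correctly identifies where non-commensurability and acylindricity must enter, but it stops at the point where the real work begins, and one of its reductions (to Proposition~\ref{Prop:maln}) is incorrect.
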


Combining Theorem \ref{Thm:Eg} with Proposition \ref{rhhe}, Theorem \ref{Thm:AH} (b), Lemma \ref{Lem:Eg}, and the characterization of loxodromic elements of relatively hyperbolic groups discussed above, we obtain the following particular case. For a direct proof see \cite{Osi06b}.

\begin{cor} \label{Cor:EgRH}
Let $G$ be a group hyperbolic relative to a collection of subgroups $\Hl$. Suppose that $g_1, \ldots, g_n\in G$ are pairwise non-commensurable elements of infinite order that are not conjugate to elements of the peripheral subgroups. Then every $g_i$ is contained in a unique maximal virtually cyclic subgroup $E(g_i)$ of $G$ and $G$ is hyperbolic relative to $\Hl\cup \{ E(g_1), \ldots, E(g_n)\} $.
\end{cor}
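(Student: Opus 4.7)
The plan is to assemble the corollary from the tools listed in the hint, essentially in the order they are cited. First I would invoke Proposition~\ref{rhhe} in the ``only if'' direction to translate the relative hyperbolicity hypothesis into a hyperbolic embedding: there exists a finite subset $X\subseteq G$ such that $\Hl\h(G,X)$. With $\mathcal{A}$ defined as in (\ref{calA}), Theorem~\ref{Thm:AH}(b) then guarantees that the action of $G$ on $\Gamma(G,\mathcal{A})$ is acylindrical, since $|X|<\infty$.

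Next I would identify the elements $g_i$ as loxodromic for this action. This is exactly the characterization quoted right before the statement: for a relatively hyperbolic group, an element is loxodromic with respect to the action on $\Gamma(G,\mathcal{A})$ precisely when it has infinite order and is not conjugate into any peripheral subgroup, which is part of our hypothesis. Lemma~\ref{Lem:Eg}, applied to the acylindrical action on $\Gamma(G,\mathcal{A})$, now produces for each $i$ the unique maximal virtually cyclic subgroup $E(g_i)$ containing $g_i$, giving the first half of the conclusion.

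For the second half, I would apply Theorem~\ref{Thm:Eg} (Antolin--Minasyan--Sisto) to the pairwise non-commensurable loxodromic elements $g_1,\ldots,g_n$: it yields
\[
\{E(g_1),\ldots,E(g_n)\}\cup\Hl \;\h\; (G,X).
\]
Since $X$ is still finite, Proposition~\ref{rhhe} applied in the ``if'' direction converts this hyperbolic embedding back into relative hyperbolicity, so $G$ is hyperbolic relative to $\Hl\cup\{E(g_1),\ldots,E(g_n)\}$, as required.

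There is no real obstacle here, since the corollary is explicitly billed as a particular case obtained by combining the cited results. The only point requiring a little care is ensuring that the loxodromic notion used in Theorem~\ref{Thm:Eg} (defined via the acylindrical action on $\Gamma(G,\mathcal{A})$) coincides with the notion used implicitly in the hypothesis (infinite order and not conjugate into a peripheral), but this is exactly the equivalence recorded in the paragraph preceding Lemma~\ref{Lem:Eg}.
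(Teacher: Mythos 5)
Your proposal is correct and follows exactly the combination of results that the paper indicates: you invoke Proposition~\ref{rhhe}, Theorem~\ref{Thm:AH}(b), the loxodromic characterization, Lemma~\ref{Lem:Eg}, and Theorem~\ref{Thm:Eg} in the same way the text prescribes, then close the loop with Proposition~\ref{rhhe} again. This matches the paper's argument step for step.
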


\section{Small cancellation in groups with hyperbolically embedded subgroups}

\subsection{Olshanskii's small cancellation conditions}\label{Sec:Ols}

We begin by reviewing the necessary background from \cite{Ols93}. Let $G$ be a group generated by an alphabet $\mathcal A$. Consider any presentation
\begin{equation}\label{GA0}
G=\langle \mathcal A\; | \; \mathcal O\rangle .
\end{equation}
We do not assume that this presentation is finite; in particular, we can take $\mathcal O$ to be the set of all words in $\mathcal A$ representing the identity in $G$. We write $W_1\equiv W_2$ to express letter-for-letter equality of two words $W_1,W_2\in \mathcal A^\ast$.

In what follows, we use the standard terms \emph{vertices}, \emph{edges}, and \emph{faces} for \emph{$0$-cells}, \emph{$1$-cells}, and \emph{$2$-cells} of planar $CW$-complexes. Recall that a {\it van Kampen diagram} $\Delta $ over (\ref{GA0})
is a finite, oriented, connected, simply-connected, planar $CW$-complex endowed with a labelling
function $\lab\colon E(\Delta )\to \mathcal A$, where $E(\Delta)$ denotes the set of edges of $\Delta$, such that $\lab
(e^{-1})\equiv (\lab (e))^{-1}$ for all $e\in E(\Delta)$.

Given a face $\Pi $ of $\Delta $, we denote by $\partial \Pi$ the boundary of $\Pi $; similarly, $\partial \Delta $ denotes the boundary of $\Delta $. The labels of $\partial \Pi $ and $\partial \Delta $ are defined up to a cyclic permutation. An additional requirement is that for any face $\Pi $ of $\Delta $ the boundary label $\lab (\partial \Pi)$ is equal to (a cyclic permutation of) a word $P^{\pm 1}$, where $P\in \mathcal O$.

By the van Kampen lemma, a word $w$ in the alphabet $\mathcal A$ represents the identity in the group $G$ given by
(\ref{GA0}) if and only if there exists a van Kampen diagram $\Delta $ over (\ref{GA0}) such that $\lab(\partial \Delta)\equiv W$.

Given a set of words $\mathcal R$ in the alphabet $\mathcal A$, we consider the quotient group of $G$
given by
\begin{equation}\label{quot}
\overline{G} = G/\ll \mathcal R\rr = \langle \mathcal A\; |\; \mathcal O\cup \mathcal R\rangle .
\end{equation}

To analyze properties of the quotient group $\overline{G}$ we will make use of a simplified version of the $C(\mu, \e, \lambda, c, \rho)$ and $C_1(\mu, \e, \lambda, c, \rho)$ small cancellation conditions introduced in \cite{Ols93}. The difference is that we require the relators to be geodesic rather than $(\lambda, c)$-quasi-geodesic. Thus, the conditions $C(\mu, \e, \rho)$ and $C_1(\mu, \e, \rho)$ correspond to $C(\mu, \e, 1,0, \rho)$ and $C_1(\mu, \e, 1,0, \rho)$, respectively, in the notation used in \cite{Ols93} and the subsequent papers \cite{Hull,HO,Osi10}.

More precisely, we say that a word $W$ in the alphabet $\mathcal A$ is {\it $(G, \mathcal A)$-geodesic} if any path in the Cayley graph $\Gamma (G, \mathcal A)$ labeled by $W$ is geodesic. Recall that $\mathcal R$ is said to be {\it symmetric} if for any $R\in \mathcal R$, $\mathcal R$ contains all cyclic shifts of $R^{\pm 1}$. If a word $W\in \mathcal A^\ast$ decomposes as $W\equiv UV$ for some $U,V\in \mathcal A^\ast$, we say that $U$ is an \emph{initial subword} of $W$.  The length of a word $W\in \mathcal A^\ast$ (i.e., the number of letters in $W$) is denoted by $\| W\|$.

\begin{figure}
   \begin{center}
   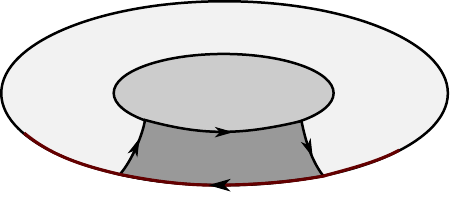
   \end{center}
   \vspace*{-3mm}
   \caption{A contiguity subdiagram}\label{fig1}
 \end{figure}

\begin{defn}\label{DefSC}
A symmetric set of words $\mathcal R$ satisfies the {\it $C(\e , \mu , \rho )$--condition} for some $\e \ge 0$ and $\mu, \rho >0$,
if the following conditions hold.
\begin{enumerate}
\item[(a)] All words in $\mathcal R$ are $(G, \mathcal A)$-geodesic and have length at least  $\rho $.
\item[(b)] Suppose that words $R,R^\prime \in \mathcal R$ have initial subwords $U$ and $U^\prime$, respectively,  such that
\begin{equation}\label{piece1}
\max \{ \| U\| ,\, \| U^\prime\| \} \ge \mu \min\{\| R\|, \| R^\prime\|\}
\end{equation}
and $U^\prime = YUZ$ in $G$ for some words $Y$, $Z$ of length
\begin{equation}\label{piece2}
\max \{ \| Y\| , \,\| Z\| \} \le \e.
\end{equation}
Then $YRY^{-1}=R^\prime$ in $G$.
\end{enumerate}
Further, we say that $\mathcal R$ satisfies the {\it $C_1(\e , \mu , \rho )$--condition} if,  in addition to (a) and (b), we have the following.
\begin{enumerate}
\item[(c)] Suppose that a word $R\in \mathcal R$ contains two disjoint subwords $U$ and $U^\prime$ such that $U^\prime = YUZ$ or $U^\prime = YU^{-1}Z$ in $G$ for some words $Y$, $Z$ and the inequality (\ref{piece2}) holds. Then
    $$ \max \{ \| U\| ,\, \| U^\prime\| \}< \mu \| R\|.$$
\end{enumerate}
\end{defn}

Let $\Delta $ be a van Kampen diagram over (\ref{quot}). A face in $\Delta$ is called an {\it $\mathcal R$-face} if its boundary label is a word from $\mathcal R$. Suppose that $p$ is a subpath of $\partial\Delta$ and there is a simple closed path
\begin{equation}\label{pathp}
c=s_1q_1s_2q_2
\end{equation}
in $\Delta $, where $q_1$ is a subpath of
$\partial \Pi $, $q_2$ is a subpath of $p$, and
\begin{equation}\label{sides}
\max \{\ell( s_1),\, \ell(s_2) \} \le \e
\end{equation}
for some constant $\e >0$. Let $\Gamma $ denote the subdiagram of
$\Delta $ bounded by $c$. If $\Gamma $
contains no $\mathcal R$-faces, we say that $\Gamma $ is an {\it $\e $--contiguity subdiagram} of $\Pi $ to the segment $p$ of $\partial \Delta $. The ratio $\ell(q_1)/\ell(\partial \Pi )$ is called the {\it contiguity degree} of $\Pi $ to $\partial \Delta$ and is denoted by $(\Pi , \Gamma , \partial \Delta)$. Since $\Gamma$ contains no $\mathcal R$-faces, it can be considered a diagram over (\ref{GA0}).

The analogue of the classical Greendlinger lemma for small cancellation quotients of hyperbolic groups was proved by Olshanskii in \cite[Lemma 6.6]{Ols93}. As observed in \cite[Lemma 4.4]{Osi10}, the same proof works in the more general context of groups with hyperbolic (but not necessarily locally finite) Cayley graphs. Below we reproduce a simplified version of the latter result replacing the more general $C( \e, \mu, \lambda, c,\rho)$ condition considered in \cite{Ols93,Osi10} with the stronger condition $C(\e, \mu, \rho)$.

\begin{lem}[Olshanskii]\label{GOL}
Suppose that the Cayley graph $\Gamma (G, \mathcal A)$ of a group $G$ given by the presentation (\ref{GA0}) is hyperbolic. Then for every $\mu \in (0, 1/16]$, there exist $\e, \rho >0$ such that the following holds.

Let $\mathcal R$ be a symmetric set of words in the alphabet $\mathcal A$ satisfying $C(\e, \mu , \rho )$. Assume that $W\equiv W_1W_2W_3W_4$ is a word in the alphabet $\mathcal A$ representing $1$ in $\overline{G}=G/\ll \mathcal R\rr$ such that each of the words $W_1, \ldots, W_4$ is $(G, \mathcal A)$-geodesic and $W\ne 1$ in $G$. Then there exist a van Kampen diagram $\Delta $ over (\ref{quot}) with boundary $\partial \Delta = p_1p_2p_3p_4$, where $\lab(p_i)\equiv W_i$ for $i=1, \ldots, 4$,  an $\mathcal R$-face $\Pi $ of $\Delta $, and $\e$-contiguity subdiagrams $\Gamma_i$ of $\Pi $ to the segments $p_i$ of $\partial \Delta $ (see Fig. \ref{fig4cs}) such that $$\sum_{i=1}^4(\Pi, \Gamma_i, p_i)\ge 1-13\mu .$$
\end{lem}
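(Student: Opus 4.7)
The plan is to work with a van Kampen diagram $\Delta$ over the presentation (\ref{quot}) whose boundary, read as $p_1p_2p_3p_4$, is labelled by $W_1W_2W_3W_4$, and to choose $\Delta$ so that the number of $\mathcal R$-faces is minimal. Because $W \neq 1$ in $G$, this minimum is positive, so $\Delta$ has at least one $\mathcal R$-face. The first preparatory step is to verify that minimality, together with condition $C(\e, \mu, \rho)$, forces every contiguity subdiagram between two \emph{distinct} $\mathcal R$-faces $\Pi$ and $\Pi'$ of $\Delta$ to have contiguity degree strictly less than $\mu$. Indeed, if the degree were $\ge \mu$, then reading the labels of the corresponding arcs gives subwords of relators satisfying (\ref{piece1}) and (\ref{piece2}) from Definition \ref{DefSC}(b); the conclusion $YRY^{-1}=R'$ in $G$ then allows a cut-and-paste surgery along the contiguity subdiagram that removes both $\Pi$ and $\Pi'$, reducing the number of $\mathcal R$-faces and contradicting minimality.

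Next, for each $\mathcal R$-face $\Pi$ I would organize the information carried by $\partial \Pi$ into a \emph{geodesic polygonal picture}: fix a maximal system of $\e$-contiguity subdiagrams from $\Pi$ to the four sides $p_1, p_2, p_3, p_4$ of $\partial \Delta$ and to every other $\mathcal R$-face of $\Delta$. The arcs $q_1 \subseteq \partial \Pi$ arising from these subdiagrams, together with the short transition arcs $s_1, s_2$ of length $\le \e$ from (\ref{pathp}), decompose $\partial \Pi$ into a geodesic polygon whose sides fall into three classes: contiguity arcs to $\partial \Delta$, contiguity arcs to other $\mathcal R$-faces, and short $\e$-transitions. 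All arcs are geodesic since relators are $(G,\mathcal A)$-geodesic.

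The heart of the argument is then an application of Lemma \ref{N123} to this polygon. Suppose for contradiction that no $\mathcal R$-face admits contiguity subdiagrams $\Gamma_i$ to the sides $p_i$ with $\sum_{i=1}^{4}(\Pi,\Gamma_i,p_i)\ge 1-13\mu$. One selects $\rho$ large compared to $\e$, $\delta$ and $\mu^{-1}$ so that the contribution of the short $\e$-transitions to the perimeter of $\partial \Pi$ is negligible. Then on each $\partial \Pi$ the arcs of the second class (contiguities to other $\mathcal R$-faces) must collectively occupy more than $13\mu \|\partial \Pi\|$ of the boundary. Summing over all $\mathcal R$-faces of $\Delta$ and choosing a face $\Pi$ realizing the extremal behaviour (e.g.\ one to which the longest single contiguity arc is attached), Lemma \ref{N123} applied to the boundary polygon of $\Pi$ produces a subarc of one side of length $>10^{-3}c$ that $13\delta$-fellow-travels another side of the same polygon. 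For $\e$ chosen appropriately larger than $13\delta$, this fellow-travelling promotes to an $\e$-contiguity subdiagram between $\Pi$ and another $\mathcal R$-face of contiguity degree $\ge \mu$, contradicting the bound established in step one.

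The main obstacle is the bookkeeping that produces the explicit constant $13\mu$. One has to track, for the selected $\Pi$, four kinds of contributions to $\|\partial \Pi\|$: the desired contiguity to $\partial \Delta$; the forbidden large contiguities to other $\mathcal R$-faces (which minimality rules out above threshold $\mu$); the cumulative length of the short $s_1,s_2$-transitions (of order $\e$ per contiguity, controlled by choosing $\rho$ large); and the constants $2\delta$, $13\delta$, $4\delta$ that Lemma \ref{Lem:HRect} and Lemma \ref{N123} inject into the polygon estimates. The hypothesis $\mu \le 1/16$ provides exactly the slack needed so that, after choosing $\e$ small relative to $\delta$ and $\rho$ large relative to $\e$ and $\delta^{-1}\mu^{-1}$, these error terms can be absorbed into the single factor $13\mu$ in the final inequality.
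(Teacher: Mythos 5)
The paper does not actually prove Lemma~\ref{GOL}: it is stated as a (simplified form of) Olshanskii's \cite[Lemma 6.6]{Ols93} and Osin's \cite[Lemma 4.4]{Osi10}, and the reader is referred to those sources. Your attempt is therefore a reconstruction of the cited proof, and while its opening move (a van Kampen diagram with a minimal number of $\mathcal R$-faces, reduced by observing that a contiguity of degree $\ge \mu$ between two distinct $\mathcal R$-faces forces a cancellable pair via part~(b) of the $C(\e,\mu,\rho)$-condition) is the right starting point and is faithful in spirit to Olshanskii's notion of a reduced diagram, the core step does not go through as written.

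The gap is in how Lemma~\ref{N123} is invoked. You propose to apply it to ``the boundary polygon of $\Pi$'' after sorting its arcs into three classes, selecting an ``extremal'' $\Pi$, and extracting a long fellow-travelling pair of sides. But Lemma~\ref{N123} partitions the sides into exactly \emph{two} classes and its hypothesis $\alpha > \max\{cn,10^3\beta\}$ requires a bound on $n$, the number of sides of the polygon, which in your setup is the number of contiguity arcs and transitions attached to $\Pi$ — a quantity you never control. Moreover its conclusion is that two sides from the \emph{same} class $A$ fellow-travel, which is not the sort of contiguity (between $\Pi$ and another face, or $\Pi$ and $\partial\Delta$) you need for a contradiction; and since only $C(\e,\mu,\rho)$ is assumed, not $C_1$, a fellow-travelling between two arcs of $\partial\Pi$ itself is not excluded a priori. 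In the actual Olshanskii/Osin argument the polygon lemma is not applied locally to one face's boundary in this way: the proof constructs a system of bonds and contiguity subdiagrams for the whole diagram, works through a chain of intermediate estimates (Lemmas~21--23 of \cite{Ols93}; Lemmas~4.2--4.3 of \cite{Osi10}), and uses a global weight/count over all $\mathcal R$-faces and bonds — essentially a planar Euler-formula argument — to force at least one face with total contiguity to $\partial\Delta$ at least $1-13\mu$. Your closing paragraph correctly anticipates that a delicate bookkeeping is needed, but the argument you sketch for where the bookkeeping happens (inside a single $\partial\Pi$, with an ``extremal face'' chosen by fiat) is not the argument that closes the gap; the global counting machinery is the missing ingredient, not a routine absorption of error terms.
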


\begin{figure}
   \begin{center}
   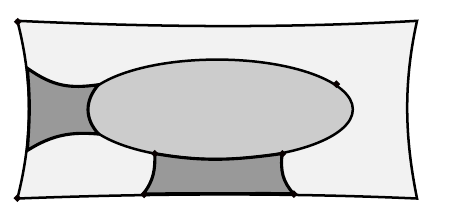
   \end{center}
   \vspace*{-3mm}
   \caption{}\label{fig4cs}
 \end{figure}

\begin{rem}
The lemma can be proved for the concatenation of $n$ geodesic words for any $n\in \NN$. However, the case $n=4$ is sufficient for most applications.
\end{rem}

\subsection{Centralizers in small cancellation quotients}

In this section, we study centralizers of elements in small cancellation quotients of groups acting on hyperbolic spaces. Our main result is Lemma \ref{Lem:HO} below. It plays an important role in the proofs of Theorem \ref{app1} and Theorem \ref{app2}. We begin with an auxiliary result.

\begin{lem}\label{Lem:HO}
Let $G$ be a group given by presentation (\ref{GA0}) such that the Cayley graph $\Gamma (G, \mathcal A)$ is $\delta$-hyperbolic for some $\delta \ge 0$. For any $\e\ge 4\delta$, any $\mu, \rho>0$, and any set of words $\mathcal R$ satisfying the $C_1(\e, \mu ,\rho )$ condition, the following holds.

Let $U$ and $V$ be disjoint subwords of some $R\in \mathcal R$, and let $p$ be a geodesic path in $\Gamma(G,\mathcal A)$. Suppose that $a_1b_1c_1d_1$ and $a_2b_2c_2d_2$ are quadrilaterals in $\Gamma(G, \mathcal A)$ such that $\lab(b_1)\equiv U$, $\lab(b_2)\equiv V^{\pm 1}$, $d_1$, $d_2$ are subpaths of $p$, and $\ell(a_i), \ell(c_i)\leq\e$ for $i=1,2$. Then the overlap between $d_1$ and $d_2$ has length less than $\mu\|R\|+10\e$.
\end{lem}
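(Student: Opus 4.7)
My plan is to prove the contrapositive: supposing the overlap $t$ has length $L := \ell(t) \ge \mu\|R\| + 10\e$, I would derive a contradiction with condition $C_1(\e,\mu,\rho)$(c). The strategy is to produce disjoint subwords $U_0 \subseteq U$ and $W_0 \subseteq V$ of $R$, both of length close to $L$, that are related in $G$ by conjugating words of length at most $\e$.

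First I would trim the overlap to eliminate boundary effects. Since $d_1, d_2$ are subpaths of the common geodesic $p$, so is $t$. Set $\alpha = 2\delta + \e + 1$ and let $t^{\ast}$ be the subpath of $t$ obtained by deleting initial and terminal segments of length $\alpha$. A direct computation of positions along $p$, using $L \ge \mu\|R\| + 10\e \gg 2\alpha$, shows that $t^{\ast}_{\pm}$ lies more than $2\delta + \e$ away (measured along $d_i$) from both endpoints of $d_i$, for $i = 1, 2$. Since $\ell(a_i), \ell(c_i) \le \e$, the triangle-inequality argument from the proof of Lemma \ref{Lem:HRect}(c) together with Lemma \ref{Lem:HRect}(a) then yields $d(t^{\ast}_{\pm}, b_i) \le 2\delta$ for each $i$.

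Next I would pick vertices $u_{\pm} \in b_1$ and $v_{\pm} \in b_2$ with $d(u_{\pm}, t^{\ast}_{\pm}), d(v_{\pm}, t^{\ast}_{\pm}) \le 2\delta$, giving $d_{\mathcal A}(u_{\pm}, v_{\pm}) \le 4\delta \le \e$ by the hypothesis $\e \ge 4\delta$. Since $b_1$ is $(G,\mathcal A)$-geodesic with label $U$, the subpath of $b_1$ between $u_-$ and $u_+$ has label $U_0$, a subword of $U$, with $\|U_0\| = d_{\mathcal A}(u_-, u_+) \ge L - 8\delta - 2\e - 2$. Define $W_0$ analogously from $b_2$, inverting the label if $\lab(b_2) \equiv V^{-1}$ so that $W_0$ (rather than $W_0^{-1}$) is a subword of $V$, hence of $R$. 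Since $U, V$ occupy disjoint positions in $R$, so do $U_0, W_0$. The closed quadrilateral with vertices $u_-, v_-, v_+, u_+$, whose two short sides have length $\le \e$, yields a group relation $W_0 = Y U_0^{\pm 1} Z$ in $G$ with $\|Y\|, \|Z\| \le \e$.

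Applying $C_1(\e,\mu,\rho)$(c) to the disjoint subwords $U_0, W_0$ of $R$ now forces $\|U_0\| < \mu\|R\|$; combined with the lower bound on $\|U_0\|$, this gives $L < \mu\|R\| + 8\delta + 2\e + 2 \le \mu\|R\| + 4\e + 2 < \mu\|R\| + 10\e$ (using $\delta \le \e/4$ and the slack in the constants), contradicting the assumption. The main subtlety is the trimming step: a naive projection of the raw endpoints $t_\pm$ via Lemma \ref{Lem:HRect}(b) only yields $d(t_\pm, b_i) \le 2\delta + \e$, producing conjugating lengths $\|Y\|, \|Z\| \le 4\delta + 2\e > \e$, which is too weak to invoke condition (c). Trimming to $t^{\ast}$ reduces these to $\le 4\delta \le \e$ at the cost of $O(\delta + \e)$ in the matched length, comfortably absorbed by the $10\e$ slack in the conclusion. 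A minor care point is tracking the orientation of $\lab(b_2) = V^{\pm 1}$, handled by inverting the relation when needed.
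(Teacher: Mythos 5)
Your strategy --- argue the contrapositive, project the long overlap onto $b_1$ and $b_2$, extract disjoint subwords $U_0, W_0$ of $R$ that are $\e$-pieces of one another, and contradict $C_1(\e,\mu,\rho)$(c) --- is the same skeleton as the paper's proof. The implementation differs in one place: you pre-trim the overlap by $\alpha = 2\delta+\e+1$ and then project the trimmed endpoints directly via the Lemma~\ref{Lem:HRect}(a) argument, landing within $2\delta$ of each $b_i$; the paper instead projects the raw endpoints via Lemma~\ref{Lem:HRect}(b) (getting $\le 2\delta+\e\le 1.5\e$), forms an inner geodesic quadrilateral on $x_1,y_1,y_2,x_2$, and applies Lemma~\ref{Lem:HRect}(c) a second time to that quadrilateral to extract a subpath of $b_1^{\pm 1}$ of length $\ge \mu\|R\|$ within $2\delta\le\e$ of $b_2$.

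The gap is in your final arithmetic. You obtain $L<\mu\|R\|+8\delta+2\e+2\le\mu\|R\|+4\e+2$ and assert this is $<\mu\|R\|+10\e$ ``using\,\dots\,the slack in the constants.'' But $4\e+2<10\e$ is equivalent to $\e>1/3$, and the hypotheses allow $\e\ge 4\delta\ge 0$ with $\delta=0$, so $\e$ may be arbitrarily small; as written, the contradiction does not follow. The spurious $+2$ is forced on you by the $+1$ in $\alpha$, which you need so that the trimmed endpoints sit \emph{strictly} more than $2\delta+\e$ from the short sides $a_i,c_i$. The paper's second application of Lemma~\ref{Lem:HRect}(c) to the inner quadrilateral carries no such additive slack: it gives $\ell(t)\ge \d(x_1,y_1)-\d(x_1,x_2)-\d(y_1,y_2)-4\delta\ge(\mu\|R\|+7\e)-3\e-3\e-4\delta\ge\mu\|R\|$ on the nose, valid for every $\e\ge 4\delta$. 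Your route can be repaired --- for $\e>0$ the trim $\alpha=2\delta+\e$ already yields strict inequality against the interiors of $a_i,c_i$ (a point within $2\delta$ of the endpoint $(a_i)_+$ is already within $2\delta$ of $b_i$), giving $L<\mu\|R\|+8\delta+2\e\le\mu\|R\|+4\e<\mu\|R\|+10\e$, with $\e=0$ (hence $\delta=0$, so the graph is a tree and the $a_i,c_i$ are trivial) handled separately --- but as stated the last step fails.
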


\begin{proof}
Let $o$ denote the overlap of $d_1$ and $d_2$ (see Fig. \ref{figC1}). Assume that $\ell(o)\ge \mu\|R\|+10\e$.

By Lemma \ref{Lem:HRect} (b), we can choose points $x_1$, $y_1$ on $b_1$, such that
\begin{equation}\label{do-y1}
\max\{\d(o_-, y_1), \d(o_+, x_1)\}\leq 2\delta + \max\{\ell(a_1), \ell(c_1)\} \le 2\delta +\e \le 1.5\e.
\end{equation}
Similarly, we choose $x_2$ and $y_2$ on $b_2$ satisfying $\d(o_+, x_2) \le 1.5\e$ and $\d(o_-, y_2)\leq 1.5\e$. (If the quadrilaterals $a_1b_1c_1d_1$ and $a_2b_2c_2d_2$ are positioned as shown on Fig. \ref{figC1}, we can take $y_1=(c_1)_-$, $x_2=(a_2)_+$, and improve the upper bounds for $\d(o_-, y_1)$ and $\d(o_+, x_2)$ to $\e$; however, we choose to make our proof independent of the relative position of the quadrilaterals at the cost of a slight increase of the constants.)

\begin{figure}
   \begin{center}
   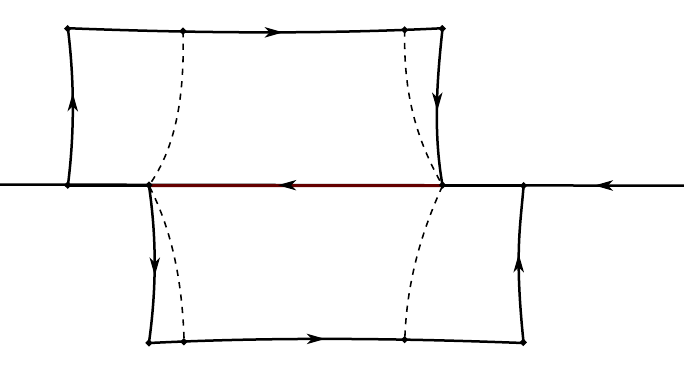
   \end{center}
   \vspace*{-3mm}
   \caption{The proof of Lemma \ref{Lem:HO}}\label{figC1}
 \end{figure}

By the triangle inequality, we have $\d (x_1, x_2)\le 3\e $ and $\d (y_1, y_2)\le 3\e$. Since $p$ is geodesic, so is $o$. Using (\ref{do-y1}), we obtain
$$
\d(x_1, y_1) \ge \ell(o) - \d(o_+, x_1) -\d(o_-, y_1) \ge \mu \| R\| + 7\e \ge \mu \| R\|+\d (x_1, x_2)+\d (y_1, y_2) +4\delta.
$$
Applying Lemma \ref{Lem:HRect} (c) to the geodesic quadrilateral with the consecutive vertices $x_1$, $y_1$, $y_2$, $x_2$, we obtain a subpath $t$ of $b_1^{\pm 1}$ such that $\d (t_\pm, b_2)\le 2\delta \le \e$ and $$\ell (t) \ge \d(x_1, y_1) - \d (x_1, x_2)-\d (y_1, y_2) -4\delta \ge \mu \| R\|.$$ Since $\lab(b_1)\equiv U$ and $\lab(b_2)\equiv V^{\pm 1}$ are disjoint subwords of $R$, this contradicts the $C_1(\e, \rho, \mu)$ small cancellation condition.
\end{proof}

\begin{lem}\label{Prop:HO}
Let $G$ be a group given by the presentation (\ref{GA0}) such that the Cayley graph $\Gamma (G, \mathcal A)$ is hyperbolic. For any $N\ge 0$, there exist $\e,\rho>0$ such that the following holds.

Let $\mathcal R$ be a set of words in the alphabet $\mathcal A$ satisfying the $C_1(\e, 1/100, \rho)$ small cancellation condition, $\overline G$ the quotient group defined by (\ref{quot}). For any $g\in G$ of length $|g|_\mathcal A\le N$, we have $C_{\overline{G}}(\gamma(g))=\gamma(C_G(g))$, where $\gamma\colon G\to \overline{G}$ is the natural homomorphism.
\end{lem}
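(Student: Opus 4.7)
The inclusion $\gamma(C_G(g)) \subseteq C_{\overline G}(\gamma(g))$ is immediate from $\gamma$ being a homomorphism, so I focus on the reverse inclusion. Given $\bar h \in C_{\overline G}(\gamma(g))$, I choose $h \in \gamma^{-1}(\bar h)$ of minimal $\mathcal A$-length and aim to prove $[h, g] = 1$ in $G$ (which gives $h \in C_G(g)$ with $\gamma(h) = \bar h$). The plan is to argue by contradiction: assuming $[h, g] \ne 1$ in $G$, I construct a strictly shorter preimage of $\bar h$.

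Fix geodesic words $h_0, g_0$ for $h, g$. Since $[h, g] \ne 1$ in $G$ but $[h, g] \in \ker\gamma$, the van Kampen lemma produces a diagram $\Delta$ over the quotient presentation with boundary $p_1 p_2 p_3 p_4$ labeled $h_0 g_0 h_0^{-1} g_0^{-1}$ and containing at least one $\mathcal R$-face. Lemma~\ref{GOL} with $\mu = 1/100$ (and $\e, \rho$ chosen suitably) then supplies an $\mathcal R$-face $\Pi$ together with $\e$-contiguity subdiagrams $\Gamma_1, \dots, \Gamma_4$ of total contiguity at least $87/100$. For $i \in \{2, 4\}$ the quadrilateral inequality gives $\ell(q_1^{(i)}) \le \ell(q_2^{(i)}) + 2\e \le N + 2\e$, so choosing $\rho$ large compared to $N$ and $\e$ forces $(\Pi, \Gamma_2, p_2) + (\Pi, \Gamma_4, p_4) < 1/100$, hence $(\Pi, \Gamma_1, p_1) + (\Pi, \Gamma_3, p_3) > 86/100$; after relabeling we may assume $(\Pi, \Gamma_1, p_1) \ge (\Pi, \Gamma_3, p_3)$.

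If $(\Pi, \Gamma_1, p_1) > 1/2 + 2\e/\rho$, the classical rerouting of $p_1$ around $\partial \Pi$ (replace $q_2^{(1)}$ by $s_2^{(1)}$ concatenated with the complementary arc of $\partial \Pi$ and $s_1^{(1)}$) yields a word of length at most $|h| - |h_1| + 2\e + (\|R\| - \|U_1\|) < |h|$ representing $\bar h$ in $\overline G$, contradicting minimality. Otherwise both $(\Pi, \Gamma_i, p_i)$ for $i \in \{1, 3\}$ lie in roughly $[0.36,\, 0.5]$, and I use a combined rerouting involving $\Gamma_3$ as well. Since $p_1$ coincides in $\Gamma(G, \mathcal A)$ with the reverse of $(hgh^{-1})^{-1} p_3^{-1}$ (both are geodesics from $1$ to $h$ with label $h_0$), I transport $\Gamma_3$ to a quadrilateral $\widetilde \Gamma_3$ in $\Gamma(G, \mathcal A)$ whose long side is a subpath of $p_1$ at positions $[i_3, j_3]$ and whose opposite side is labeled $U_3^{-1}$. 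Lemma~\ref{Lem:HO} then applies to the quadrilaterals of $\Gamma_1$ and $\widetilde \Gamma_3$ with $p = p_1$, $U = U_1$, $V = U_3$ (disjoint subwords of the same $R$), bounding the length of the overlap $[i_1, j_1] \cap [i_3, j_3]$ by $\|R\|/100 + 10\e$.

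Assuming $i_1 \le i_3$, this overlap bound combined with $|h_1| \ge \|U_1\| - 2\e$ forces $i_3 - i_1 > N + 2\e + 0.14\|R\|$ for $\rho$ sufficiently large. The combined rerouting then goes as follows: from $1$, traverse $p_1$ to $(q_2^{(1)})_-$, cross $s_2^{(1)}$ to $\partial \Pi$ at $(q_1^{(1)})_+$, follow the short arc of $\partial \Pi$ (of length at most $\|R\| - \|U_1\| - \|U_3\| \le 0.14\|R\|$) to $(q_1^{(3)})_-$, cross $s_1^{(3)}$ to $(q_2^{(3)})_+$ on $p_3$, and backtrack along $p_3^{-1}$ and $p_2^{-1}$ to $h = (p_1)_+$. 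This path lies in the $1$-skeleton of $\Delta$, so its label represents $\bar h$ in $\overline G$, with total length at most $n + N + 2\e + 0.14\|R\| - (i_3 - i_1) < n = |h|$, again contradicting minimality. The principal obstacle is the orientation bookkeeping required to carry out the translation of $\Gamma_3$ into a quadrilateral to which Lemma~\ref{Lem:HO} genuinely applies, together with the planar-combinatorial verification that the ``short arc'' of $\partial \Pi$ indeed connects the correct endpoints of $q_1^{(1)}$ and $q_1^{(3)}$ for the combined rerouting to close up as a valid path in $\Delta$.
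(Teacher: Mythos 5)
Your proof is correct and follows essentially the same strategy as the paper's: apply Lemma \ref{GOL} to the commutator diagram, use $\rho\gg N+\e$ to make the contiguity to the two $g$-labeled sides negligible, invoke Lemma \ref{Lem:HO} to bound the overlap of the remaining two contiguity quadrilaterals along a common geodesic with label $h_0$, and reroute around $\partial\Pi$ to produce a strictly shorter representative. Where the paper arranges the two quadrilaterals onto a common geodesic by gluing a mirror copy $\Delta'$ of the diagram along $q_1=q_2'$, you do so by left-translating $\Gamma_3$ inside $\Gamma(G,\mathcal A)$; this is a cosmetic variant of the same move, and your explicit two-case split (single rerouting when one contiguity degree exceeds roughly $1/2$, combined rerouting otherwise) plays the role of the paper's preliminary estimate~(\ref{Eq:lsi}).
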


\begin{proof}
For hyperbolic groups, the lemma was proved in \cite{Ols93}. For relatively hyperbolic groups, this result can be derived from
\cite[Lemma 5.5]{HO}. Our proof is based on the same idea as the proof of \cite[Lemma 5.5]{HO}.

Fix some $N\in \NN$ and let $\mu=1/100$. Note that the $C(\e, \mu, \rho)$ condition becomes stronger as $\e$ and $\rho$ increase. Therefore, we can choose a sufficiently large value of $\e$ and then a sufficiently large value of $\rho$ such that the conclusions of Lemma \ref{GOL} and Lemma \ref{Lem:HO} simultaneously hold and, in addition,
\begin{equation}\label{Eq:rho}
\rho \ge 100(2\e+N).
\end{equation}
In particular, we have $\e/\rho <1/200$.

Let $g\in G$ be an element of length $|g|_\mathcal A\le N$ and let $U$ be a $(G, \mathcal A)$-geodesic word representing $g$ in $G$. The inclusion $\gamma(C_G(g))\le C_{\overline G} (\gamma(g))$ is trivial, we only need to prove the inclusion $C_{\overline G} (\gamma(g))\le \gamma(C_G(g))$. Let $c\in C_{\overline G}(\gamma (g))$ and let $V$ be a $(\overline G, \mathcal A)$-geodesic word representing $c$ in $\overline G$ (in particular, $V$ is $(G, \mathcal A)$-geodesic). We will show that
\begin{equation}\label{Eq:c}
c\in \gamma(C_G(g)).
\end{equation}

Clearly, the word $UVU^{-1}V^{-1}$ represents $1$ in $\overline G$. To prove the lemma, it suffices to show that $UVU^{-1}V^{-1}=1$ in $G$. Indeed, then $V$ represents an element of $C_G(g)$ and we obtain (\ref{Eq:c}).

Arguing by contradiction, assume that $UVU^{-1}V^{-1}\ne 1$ in $G$. By Lemma \ref{GOL}, there exist a van Kampen diagram $\Delta $ over (\ref{quot}) whose boundary decomposes as $\partial \Delta = p_1q_1p_2^{-1}q_2^{-1}$, where $$\lab(p_1)\equiv \lab(p_2)\equiv U,\;\;\; \lab(q_1)\equiv \lab(q_2)\equiv V,$$ an $\mathcal R$-face $\Pi $ of $\Delta $, and $\e$-contiguity subdiagrams $\Gamma_1$, $\Gamma _2$ (respectively, $\Sigma _1, \Sigma_2$) of $\Pi $ to the segments $p_1$, $p_2$ (respectively, $q_1$, $q_2$)  of $\partial \Delta $ such that
\begin{equation}\label{Eq:contdeg}
\sum_{i=1}^2(\Pi, \Gamma_i, p_i)+ \sum_{i=1}^2(\Pi, \Sigma_i, q_i)\ge 1-13\mu= 0.87
\end{equation}
(see Fig. \ref{figCentr} (a)).

\begin{figure}
   \begin{center}
   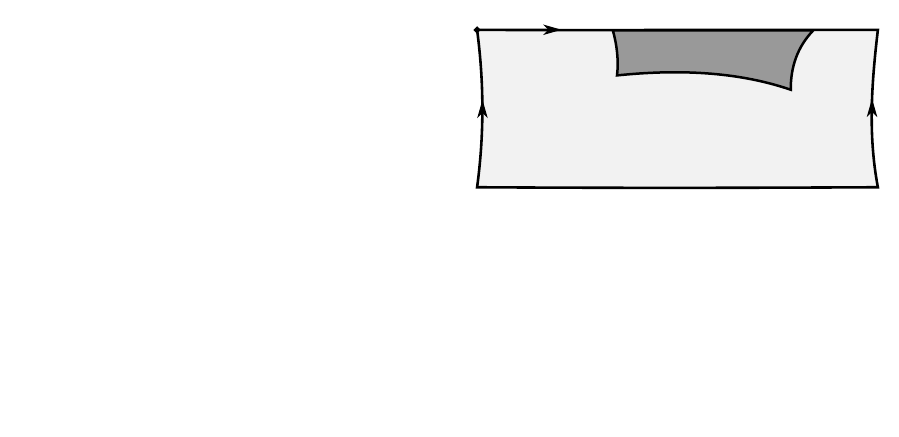
   \end{center}
   \vspace*{-3mm}
   \caption{The diagrams $\Delta $ and $\Omega$.}\label{figCentr}
 \end{figure}

Since $\lab(\partial\Pi)$ is $(G, \mathcal A)$-geodesic, the length of the common segment of $\Gamma _1$ and $\Pi $ is bounded from above by the sum of lengths of the other three sides of $\Gamma_1$, which is at most $2\e +\ell (p_1)\le 2\e +N$. Hence, $(\Pi, \Gamma_1, p_1)\le  (2\e+N)/\rho \le 1/100 $ by (\ref{Eq:rho}). Similarly, we have $(\Pi, \Gamma_2, p_2)\le  1/100 $. Combining these inequalities with (\ref{Eq:contdeg}), we obtain
\begin{equation}\label{Eq:contdeg1}
\sum_{i=1}^2(\Pi, \Sigma_i, q_i)\ge 0.85.
\end{equation}

For $i=1,2$, let $q_i= r_is_it_i$, where $s_i$ is the common segment of $\Sigma _i$ and $q_i$. We claim that
\begin{equation}\label{Eq:lsi}
\ell(s_i) \ge 0.3|\partial \Pi|
\end{equation}
for $i=1,2$. Indeed, suppose that $\ell (s_1)< 0.3|\partial \Pi|$. Since $\lab(\partial \Pi)$ is $(G, \mathcal A)$-geodesic and $|\partial \Pi|\ge \rho \ge 200\e$ by (\ref{Eq:rho}), we have $(\Pi, \Sigma_1, q_1) <  (0.3 |\partial \Pi| + 2\e )/|\partial \Pi|\le  0.31$. Using (\ref{Eq:contdeg1}), we obtain $(\Pi, \Sigma_2, q_2)\ge 0.85 - (\Pi, \Sigma_1, q_1) > 0.54$. Since $V$ is $(\overline G, \mathcal A)$-geodesic, the path $q_2$ is geodesic in $\Delta$; therefore, so is $s_2$. Comparing the length of $s_2$ to the path going along the ``$\e$-sides" of $\Sigma_2$ and around the face $\Pi$, we obtain
$$
\ell(s_2)\le 2\e + |\partial \Pi| (1-(\Pi, \Sigma_2, q_2))\le 2\e + 0.46|\partial \Pi| \le 0.47 |\partial \Pi|.
$$
Recall that the sides of $\Delta $ are $(G,\mathcal A)$-geodesic. Applying the triangle inequality to the sides of $\Sigma_2$, we obtain
$$
\ell(s_2) \ge |\partial \Pi|(\Pi, \Sigma_2, q_2) -2\e > 0.54 |\partial \Pi| -2\e \ge 0.53 |\partial \Pi|,
$$
which contradicts the previous inequality. Thus, $\ell(s_1) \ge 0.3|\partial \Pi|$ and similarly $\ell(s_2) \ge 0.3|\partial \Pi|$.

Let $\Delta ^\prime$ denote a copy of $\Delta$; we use primes to distinguish between parts of $\Delta ^\prime$ and the corresponding to parts of $\Delta$. E.g., $p_1^\prime$ will denote the path in $\Delta^\prime$ corresponding to the path $p_1$ in $\Delta$, $\Sigma_1^\prime$ will denote the subdiagram of in $\Delta^\prime$ corresponding to $\Sigma_1$, etc. Consider the van Kampen diagram $\Omega$ formed by gluing $\Delta $ and $\Delta ^\prime$ along $q_1$ and $q_2^\prime$ (see Fig. \ref{figCentr} (b)). For definiteness, we will assume that
\begin{equation}\label{Eq:r1r2}
\ell (r_2)\le  \ell (r_1)
\end{equation}
(the other case is similar). By Lemma \ref{Lem:HO}, the overlap between $s_1$ and $s_2^\prime$ in $\Omega$ is at most $0.01 |\partial \Pi| + 10 \e \le  0.06 |\partial \Pi|$. Using (\ref{Eq:contdeg1}), (\ref{Eq:r1r2}), and (\ref{Eq:lsi}), we obtain that the length of the path $z$ highlighted in red on Fig. \ref{figCentr} (b) satisfies
$$
\begin{array}{rcl}
\ell(z) & \le & \ell(r_2) + \e + 0.15 |\partial \Pi| +\e +\ell(s_1t_1)+\ell(p_2)\le \\ &&\\ &&
 2\e + 0.15 |\partial \Pi| + \ell(q_2) - (\ell(s_2) - 0.06 |\partial \Pi|) +\ell(p_2)\le \\ &&\\ &&
 2\e +0.15 |\partial \Pi|+ \ell(q_2) - 0.24 |\partial \Pi| +N \le \\ &&\\ &&
 \ell(q_2) +2\e+N- 0.09 \rho < \ell(q_2).
\end{array}
$$
This contradicts the assumption that $V$ is $(\overline{G}, \mathcal A)$-geodesic since $\lab(z)$ and $\lab(q_2) \equiv V$ represent the same element of $\overline G$.
\end{proof}

\subsection{Attracting geodesics}\label{Sec:AG}
\begin{figure}
   \begin{center}
\begingroup%
  \makeatletter%
  \providecommand\color[2][]{%
    \errmessage{(Inkscape) Color is used for the text in Inkscape, but the package 'color.sty' is not loaded}%
    \renewcommand\color[2][]{}%
  }%
  \providecommand\transparent[1]{%
    \errmessage{(Inkscape) Transparency is used (non-zero) for the text in Inkscape, but the package 'transparent.sty' is not loaded}%
    \renewcommand\transparent[1]{}%
  }%
  \providecommand\rotatebox[2]{#2}%
  \newcommand*\fsize{\dimexpr\f@size pt\relax}%
  \newcommand*\lineheight[1]{\fontsize{\fsize}{#1\fsize}\selectfont}%
  \ifx\svgwidth\undefined%
    \setlength{\unitlength}{280.41985701bp}%
    \ifx\svgscale\undefined%
      \relax%
    \else%
      \setlength{\unitlength}{\unitlength * \real{\svgscale}}%
    \fi%
  \else%
    \setlength{\unitlength}{\svgwidth}%
  \fi%
  \global\let\svgwidth\undefined%
  \global\let\svgscale\undefined%
  \makeatother%
  \begin{picture}(1,0.27800065)%
    \lineheight{1}%
    \setlength\tabcolsep{0pt}%
    \put(0,0){\includegraphics[width=\unitlength,page=1]{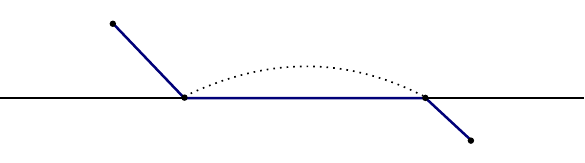}}%
    \put(0.18147723,0.25604696){\color[rgb]{0,0,0}\makebox(0,0)[lt]{\lineheight{1.25}\smash{\begin{tabular}[t]{l}$x$\end{tabular}}}}%
    \put(0.81372914,0.00502873){\color[rgb]{0,0,0}\makebox(0,0)[lt]{\lineheight{1.25}\smash{\begin{tabular}[t]{l}$b$\end{tabular}}}}%
    \put(0.28602336,0.18534311){\color[rgb]{0,0,0}\makebox(0,0)[lt]{\lineheight{1.25}\smash{\begin{tabular}[t]{l}$\ell(r)\ge \d(x,y)-\d(x, p)-\d(y, p)-c$\end{tabular}}}}%
    \put(0,0){\includegraphics[width=\unitlength,page=2]{fig2.pdf}}%
    \put(0.51636097,0.06541705){\color[rgb]{0,0,0}\makebox(0,0)[lt]{\lineheight{1.25}\smash{\begin{tabular}[t]{l}$r$\end{tabular}}}}%
    \put(0.06575839,0.07505561){\color[rgb]{0,0,0}\makebox(0,0)[lt]{\lineheight{1.25}\smash{\begin{tabular}[t]{l}$p$\end{tabular}}}}%
  \end{picture}%
\endgroup%

   \end{center}
   \vspace{-5mm}\caption{A $c$-attracting geodesic}\label{fig2}
\end{figure}

In this section we introduce the notion of an attracting geodesic, which is crucial for the proof of Proposition \ref{Prop:GL}. By abuse of terminology, we identify a paths $p\colon [0,1]\to S$ in a metric space $S$ with its image. Given a path $p$, we denote by $p_-$, $p_+$ its origin and the terminal point, respectively. The inverse path is denoted by $p^{-1}$. By $\d(s,p)$ we denote the distance from a point $s\in S$ to the path $p$ defined in the usual way. We say that two paths $p$ and $q$ \emph{share an unoriented segment} $r$ if $p=arb$ and $q=cr^{\pm 1}d$.

\begin{defn}
Let $\Gamma $ be a graph. We say that a (combinatorial) geodesic path $p$ in $\Gamma $ is \emph{$c$-attracting} for some $c\in [0, \infty)$ if for any vertices $x$, $y$ of $\Gamma $ such that $\d(x,y) - \d (x, p)- \d(y,p)\ge c$, every geodesic in $\Gamma $ going from $x$ to $y$ shares an unoriented segment of length at least $\d(x,y) - \d (x, p)- \d(y,p)-c$ with $p$. (By definition, a segment of length $0$ is a point.)
\end{defn}

This definition is motivated by the obvious fact that in a tree, all geodesics are $0$-attracting. On the other hand, in hyperbolic graphs and even in quasi-trees, long $c$-attracting geodesics may not exist (note that every geodesic of length less than $c$ in any graph is $c$-attracting though).

Our next goal is to construct arbitrarily long $2$-attracting geodesics in Cayley graphs associated to hyperbolically embedded collections of subgroups. Throughout the rest of this section, we make the following assumptions.

\noindent($\ast$)\;\;\; \emph{$G$ is a group, $\Hl$ is a collection of subgroups of $G$, $X$ is a relative generating set of $G$ with respect to $\Hl$. }

\noindent We keep the notation $\mathcal A$ and $\widehat\d_{H_i}$ introduced in Section \ref{Sec:Hyp} (see (\ref{calA})). In addition, we assume that

\noindent($\ast\ast$)\;\;\; \emph{$\G$ is hyperbolic.}

\noindent  Note that, in this section, we do not require $\Hl$ to be hyperbolically embedded in $G$ with respect to $X$.

We make use of the following terminology introduced in \cite{DGO} (in the particular case of relatively hyperbolic groups it goes back to \cite{Osi06}).

\begin{defn}
Let $p$ be a path in the Cayley graph $\G $. A subpath $q$ of $p$ is called an {\it $H_i $-component} (or simply a \emph{component}) of $p$ for some $i\in I $, if $q$ has at least one edge, the label of $q$ is a word in the alphabet $H_i\subseteq \mathcal A$, and $q$ is not contained in any longer subpath of $p$ with these properties. Two $H_i$-components $q_1, q_2$ of $p$ are {\it connected} if all vertices of $q_1$ and $q_2$ belong to the same coset of $H_i$ in $G$. An $H_i$-component $q$ of a path $p$ is called {\it isolated } if no distinct $H_i$-component of $p$ is connected to $q$.
\end{defn}

Geometrically, the fact that $H_i$-components $q_1, q_2$ of $p$ are connected means that any two vertices $u$ and $v$ of $q_1$ and $q_2$, respectively, can be connected by an edge of $\G$ labelled by an element of $H_i$.

\begin{rem}
If $p$ is a geodesic in $\G$, then every $H_i$-component of $p$ is isolated in $p$ and consists of a single edge. This observation will be used many times in this section.
\end{rem}

By a \emph{geodesic $n$-gon} in a metric space we mean a loop consisting of $n$ geodesic segments. The lemma below is a simplification of \cite[Proposition 4.14]{DGO}.

\begin{lem}\label{Omega}
Under the assumptions ($\ast$) and ($\ast\ast$), there exists a constant $D$ satisfying the following condition. Let $p$ be a geodesic $n$-gon in $\G $. Then for every $i\in I$ and every isolated $H_i$-component $q$ of $p$, the element $h\in H_i$ represented by the label of $q$  satisfies $\dl (1, h)\le Dn$.
\end{lem}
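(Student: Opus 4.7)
The aim is to produce a path from $1$ to $h$ in $\G$ of length at most $Dn$ that contains no edge of the subgraph $\Gamma(H_i,H_i)$; this will bound $\widehat d_{H_i}(1,h)$ as required. After translating $p$ by $q_-^{-1}$, we may assume $q_-=1$ and $q_+=h$. Let $s$ denote the path $p\setminus q$, going from $h$ back to $1$ along the remaining sides of the polygon.

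A first observation is that $s$ already avoids every edge of $\Gamma(H_i,H_i)$. Indeed, any such edge $e$ lying on $s$ would have both endpoints in $H_i$ and label in $H_i$, so it would belong to some $H_i$-component $q'$ of $p$ whose vertices lie in $H_i$. Since the vertices of $q$ also lie in $H_i$, $q'$ would be connected to $q$, forcing $q'=q$ by isolation; this is impossible because $e\subset s=p\setminus q$. Thus $\widehat d_{H_i}(1,h)\le \ell(s)=\ell(p)-\ell(q)$, but this estimate is not yet linear in $n$ since $\ell(p)$ can be arbitrary.

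The second step is to replace $s$ by a genuinely shorter path using the $\delta$-hyperbolicity of $\G$. The plan is to apply Olshanskii's polygon lemma (Lemma \ref{N123}) iteratively: whenever the perimeter of $p$ exceeds a threshold linear in $n$, two sides $p_i,p_j$ of $p$ contain subpaths of definite length whose endpoints are within $13\delta$ of each other, and replacing the relevant portion of $s$ lying along $p_i$ by a detour going along $p_j$ (joined by two geodesic jumps of length at most $13\delta$) strictly decreases $\ell(s)$. The along-$p_j$ segment of the new path inherits the avoidance of $\Gamma(H_i,H_i)$ from the isolation argument applied to $p$, so the only potentially new $\Gamma(H_i,H_i)$-edges arise from the jumps, each of uniformly bounded length. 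After $O(n)$ iterations, the procedure terminates with a path of length at most $Dn$.

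The main obstacle is ensuring that the short jumps do not introduce forbidden edges. Each jump is a geodesic of length at most $13\delta$, and might a priori traverse an edge of $\Gamma(H_i,H_i)$; however, the total number of jumps introduced throughout the procedure is $O(n)$, and any such offending edge can be locally circumvented by a bounded detour without changing the path's endpoints. Absorbing all additive constants and the cost of these local detours into $D$ yields the desired linear bound $\widehat d_{H_i}(1,h)\le Dn$.
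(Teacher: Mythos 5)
The paper offers no proof of this lemma, citing \cite[Proposition 4.14]{DGO} instead, so there is no in-text argument to compare yours against; the question is whether your proposal stands on its own.

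Your first observation is correct and genuinely useful: after translating so that $q_-=1$, every vertex of $q$ lies in the coset $1\cdot H_i$, and if an edge $e$ of $\Gamma(H_i,H_i)$ lay on $s=p\setminus q$, then the $H_i$-component of $p$ containing $e$ would also have all its vertices in $1\cdot H_i$, hence would be connected to $q$, contradicting isolation. So $s$ indeed avoids $\Gamma(H_i,H_i)$ and $\dl (1,h)\le\ell(s)$.

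The shortening step, however, has a real gap exactly where you flag it. You acknowledge that a ``jump'' produced by Lemma~\ref{N123}---a geodesic in $\G$ of length at most $13\delta$---might traverse an edge $e$ of $\Gamma(H_i,H_i)$, and you assert this ``can be locally circumvented by a bounded detour.'' This is false. If $e$ runs from $a$ to $ah$ with $h\in H_i$, then \emph{any} replacement path between the same endpoints that avoids $\Gamma(H_i,H_i)$ has length at least $\dl (1,h)$, and that quantity is not controlled by $\delta$ nor by any constant depending only on $\G$; for hyperbolically embedded collections it is proper, hence unbounded on any infinite $H_i$. Producing a uniform bound on $\dl (1,h)$ is precisely the content of the lemma being proved, so this step is circular. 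A secondary defect is the iteration scheme itself: once a side is replaced by a detour-with-jumps, the resulting loop is no longer a geodesic $n$-gon, so Lemma~\ref{N123} does not reapply as stated, and it is not explained why the process terminates after $O(n)$ steps with a path of controlled total length.

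What a correct proof must supply, and what your sketch omits, is a reason why the auxiliary geodesics forced by hyperbolicity cannot enter the coset $1\cdot H_i$ at all (or can do so only in a uniformly bounded way). That requires playing the isolation of $q$ off against the geometry of the remaining sides of the polygon, not merely their lengths; it cannot be obtained by patching offending edges after the fact, since there is no a priori bound on the cost of such a patch.
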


We will need the following result, which is an improved version of \cite[Lemma 4.21]{DGO}. Recall that the length of a word $u \in \mathcal A^\ast$ is denoted by $\| u\|$.

\begin{figure}
   \begin{center}
\begingroup%
  \makeatletter%
  \providecommand\color[2][]{%
    \errmessage{(Inkscape) Color is used for the text in Inkscape, but the package 'color.sty' is not loaded}%
    \renewcommand\color[2][]{}%
  }%
  \providecommand\transparent[1]{%
    \errmessage{(Inkscape) Transparency is used (non-zero) for the text in Inkscape, but the package 'transparent.sty' is not loaded}%
    \renewcommand\transparent[1]{}%
  }%
  \providecommand\rotatebox[2]{#2}%
  \newcommand*\fsize{\dimexpr\f@size pt\relax}%
  \newcommand*\lineheight[1]{\fontsize{\fsize}{#1\fsize}\selectfont}%
  \ifx\svgwidth\undefined%
    \setlength{\unitlength}{299.48691245bp}%
    \ifx\svgscale\undefined%
      \relax%
    \else%
      \setlength{\unitlength}{\unitlength * \real{\svgscale}}%
    \fi%
  \else%
    \setlength{\unitlength}{\svgwidth}%
  \fi%
  \global\let\svgwidth\undefined%
  \global\let\svgscale\undefined%
  \makeatother%
  \begin{picture}(1,0.19991142)%
    \lineheight{1}%
    \setlength\tabcolsep{0pt}%
    \put(0,0){\includegraphics[width=\unitlength,page=1]{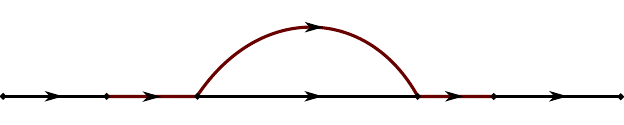}}%
    \put(0.07028398,0.00641246){\color[rgb]{0,0,0}\makebox(0,0)[lt]{\lineheight{1.25}\smash{\begin{tabular}[t]{l}$r_1$\end{tabular}}}}%
    \put(0.2278921,0.0055605){\color[rgb]{0,0,0}\makebox(0,0)[lt]{\lineheight{1.25}\smash{\begin{tabular}[t]{l}$e_j$\end{tabular}}}}%
    \put(0.48602873,0.00811631){\color[rgb]{0,0,0}\makebox(0,0)[lt]{\lineheight{1.25}\smash{\begin{tabular}[t]{l}$r_2$\end{tabular}}}}%
    \put(0.49028839,0.17935543){\color[rgb]{0,0,0}\makebox(0,0)[lt]{\lineheight{1.25}\smash{\begin{tabular}[t]{l}$g$\end{tabular}}}}%
    \put(0.71349544,0.00470857){\color[rgb]{0,0,0}\makebox(0,0)[lt]{\lineheight{1.25}\smash{\begin{tabular}[t]{l}$f$\end{tabular}}}}%
    \put(0.87962309,0.00896827){\color[rgb]{0,0,0}\makebox(0,0)[lt]{\lineheight{1.25}\smash{\begin{tabular}[t]{l}$r_3$\end{tabular}}}}%
  \end{picture}%
\endgroup%

   \end{center}
   \vspace{-3mm}
   \caption{Proof of Lemma \ref{ej}. Red edges are labelled by elements of $H_{i(j)}$.}\label{fig3}
 \end{figure}

\begin{lem}\label{Wgeod}
Suppose that $p$ is a path in $\G$ such that
$$
\lab(p)\equiv U_1 a_{1} U_2a_{2}\ldots U_{n}a_nU_{n+1},
$$
and the following conditions hold.
\begin{enumerate}
\item[(a)] For every $j=1, \ldots, n$,  $a_{j}$ is a letter in $H_{i(j)}$ for some $i(j)\in I$ and we have $\widehat\d_{H_{i(j)}}(1, a_{j}) > 5D$, where $D$ is the constant provided by Lemma \ref{Omega}.
\item[(b)] For every $j=1,\ldots , n+1$, $U_j$ is a (possibly empty) word in the alphabet $\cal A$ such that for any element $g\in G$ satisfying $H_{i(j-1)}gH_{i(j)}=H_{i(j-1)}U_jH_{i(j)}$, we have $\| U_j\| \le |g|_{\mathcal A}$. Here we assume $H_{i(0)}=H_{i(n+1)}=\{ 1\}$ for convenience.
\item[(c)] If $U_j$ is the empty word for some $j=2,\ldots , n$, then $H_{i(j-1)}\ne H_{i(j)}$.
\end{enumerate}
Then $p$ is geodesic.
\end{lem}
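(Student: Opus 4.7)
The proof is by induction on $n$. The base case $n=0$ follows directly from (b): taking $g$ to be the element represented by $U_1$ and using $H_{i(0)}=H_{i(1)}=\{1\}$ gives $\|U_1\|\le|U_1|_{\mathcal A}$, so $p$ is geodesic.

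For the inductive step, first notice that applying (b) with each $g=U_j$ (as an element of $G$) forces every $U_j$ to be $(G,\mathcal A)$-geodesic, so each subpath $p_j$ is already geodesic. Moreover, any proper sub-instance $p_ke_kp_{k+1}\cdots e_lp_{l+1}$ with $l-k<n$ still satisfies hypotheses (a), (b), (c) with fewer edges, so by the inductive hypothesis it is geodesic as well.

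Now suppose for contradiction that $p$ is not geodesic, and let $q$ be a shortest path from $p_-$ to $p_+$. For each $j\in\{1,\dots,n\}$, fix geodesics $\tau_j^-$ from $p_-$ to $(e_j)_-$ and $\tau_j^+$ from $(e_j)_+$ to $p_+$; by the previous step, $\ell(\tau_j^\pm)$ matches the length of the corresponding subpath of $p$. Form the geodesic quadrilateral $Q_j=e_j\,\tau_j^+\,q^{-1}\,\tau_j^-$. The single edge $e_j$ is an $H_{i(j)}$-component of $Q_j$; if it were isolated, Lemma \ref{Omega} applied with $n=4$ would give $\widehat d_{H_{i(j)}}(1,a_j)\le 4D<5D$, contradicting (a). Hence $e_j$ must be connected in $Q_j$ to an $H_{i(j)}$-component $f$ of one of the three geodesic sides $\tau_j^-$, $\tau_j^+$, $q^{-1}$; since those sides are geodesic, each such $f$ is a single edge.

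The remainder of the proof is the case analysis ruling out each of the three possible locations of $f$. In every case, the $H_{i(j)}$-edges joining $f_\pm$ to $(e_j)_\pm$, combined with the inductively established geodesicity of $p$'s subpaths, let one produce a representative of a suitable double coset $H_{i(k-1)}U_kH_{i(k)}$ (for an appropriate index $k\in\{j,j+1\}$) of $\mathcal A$-length strictly less than $\|U_k\|$, contradicting (b); should this shorter representative be the empty word, the $H_{i(j)}$-connection forces $H_{i(k-1)}=H_{i(k)}$, contradicting (c). I expect the bulk of the technical work to lie here: each connectivity configuration has to be translated into a concrete double-coset inequality by invoking (b) with the correct auxiliary element, and the inductive geodesicity has to be used to convert distance estimates inside $Q_j$ into the required length comparison. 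A secondary technicality, which I plan to handle by choosing $\tau_j^\pm$ so as not to begin or end with an $H_{i(j)}$-edge adjacent to $e_j$, is that $e_j$ may otherwise merge with a neighboring edge into a larger $H_{i(j)}$-component of $Q_j$; once that is circumvented, Lemma \ref{Omega} applies verbatim to $e_j$ itself and the constant $5D$ in (a) is tuned precisely so that the $4D$ bound from the quadrilateral suffices.
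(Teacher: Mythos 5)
Your overall skeleton---induct on $n$ so that all proper subpaths of $p$ are geodesic, take a shortest competing path $q$, observe that the $H_{i(j)}$-edge $e_j$ violates Lemma~\ref{Omega} unless it is connected to an $H_{i(j)}$-component on another side of a suitable geodesic polygon---is correct and matches the paper's strategy (the paper uses a minimal-counterexample formulation instead of induction, and isolates the ``no connections within $p$'' claim as a separate sub-lemma, but these are cosmetic).

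The gap is in the final paragraph: you assert that \emph{all three} possible locations of the connected component $f$ (on $\tau_j^-$, on $\tau_j^+$, or on $q$) lead to a double-coset contradiction with (b) or (c). That is false for the case $f\subseteq q$. A single connection from $e_j$ to an edge of $q$ carries no contradiction at all---it is precisely what \emph{must} happen for every $j$ once the other two cases are ruled out, and no useful representative of a double coset $H_{i(k-1)}U_kH_{i(k)}$ falls out of it. (Try to see which $k$ and which short representative you would name; there is none.) The contradiction in this situation is global, not local: one has to show that $e_1,\dots,e_n$ are connected to \emph{distinct} edges $f_1,\dots,f_n$ of $q$ appearing in that \emph{order} along $q$, so that $q$ decomposes as $q_0f_1q_1f_2\cdots f_nq_n$; then condition (b) forces $\ell(q_j)\ge\|U_j\|$ for each $j$ (because $q_j$ connects a vertex in the $H_{i(j-1)}$-coset of $(e_{j-1})_+$ to one in the $H_{i(j)}$-coset of $(e_j)_-$, hence $\lab(q_j)$ lies in the double coset $H_{i(j-1)}U_jH_{i(j)}$), and summing gives $\ell(q)\ge\ell(p)$, contradicting the choice of $q$. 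This accumulation step is the heart of the lemma, and your plan does not mention it.

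Establishing the ordered decomposition of $q$ is also not automatic from your ``parallel'' setup, where each $j$ has its own quadrilateral $Q_j$ considered independently: you would get for each $j$ an edge $f_j$ of $q$ connected to $e_j$, but with no control on the relative positions of the $f_j$ along $q$, nor on their distinctness. The paper avoids this by a sequential argument: it handles $e_1$ first in a quadrilateral $p_1e_1(p_2e_2\cdots p_{n+1})q^{-1}$ (the long side is geodesic by minimality of $n$), locates $f_1$ on $q$, truncates $q$ past $f_1$ by inserting a single $H_{i(1)}$-edge $g$, and then repeats with $e_2$ inside a pentagon $g\,p_2\,e_2\,(\cdots)\,(q')^{-1}$, and so on; the ordering of the $f_j$ is built into the peeling. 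If you want to keep a per-$j$ parallel setup you would need a separate argument that the $f_j$ appear in order on $q$, which is additional work your write-up does not acknowledge.

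Finally, a minor point: ruling out connections between $e_j$ and $H_{i(j)}$-components of $\tau_j^\pm$ is a bit more delicate than ``produce a shorter representative of a double coset.'' The paper's proof of its sub-lemma (Lemma~\ref{ej}) needs a minimality trick on the length of the subpath between the two connected components (to avoid the bigon having extra $\mathcal R$-edges), and one also has to use (c) in the case where $f$ coincides with one of the distinguished edges $e_{j\pm1}$ with an empty $U$ in between. Your one-sentence description would eventually unfold into essentially this argument, but as written it glosses over the minimality device.
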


\begin{proof}
Arguing by contradiction, we fix the minimal $n\in \mathbb N$ such that there is a non-geodesic path $p$ satisfying all the assumptions. We fix the following notation for the segments of $p$:
$$
p=p_1e_1p_2e_2\ldots p_ne_np_{n+1},
$$
where
$$
\lab(p_j)\equiv U_j, \;\;\; {\rm and}\;\;\; \lab(e_j)\equiv a_j
$$
for all $j$ (if $U_j$ is the empty word, $p_j$ is the trivial path). We will first prove the following.

\begin{lem}\label{ej}
For every $j$, $e_j$ is an isolated $H_{i(j)}$-component of $p$.
\end{lem}

\begin{proof}
That each $e_j$ is an $H_{i(j)}$-component of $p$ follows immediately from (b) and (c). Let us prove that it is isolated. By the way of contradiction assume that $e_j$ is connected to another $H_{i(j)}$-component $f$ of $p$. For definiteness, let $p=r_1e_jr_2fr_3$ (see Fig. \ref{fig3}). Let also $g$ denote the edge of $\G $ labelled by an element of $H_{i(j)}$ and going from $(e_j)_+$ to $f_-$. Without loss of generality we can assume that the pair $e_j$, $f$ is chosen in such a way that $\ell (r_2)$ is minimal possible. If $r_2$ contains $e_{j+1}$, then $e_{j+1}$ cannot be isolated in the bigon $r_2g^{-1}$ by (a) and Lemma \ref{Omega} and we get a contradiction with the choice of $e_j$ and $f$. Therefore, $r_2$ is an initial subpath of $p_{j+1}$. Reading the label of the bigon $r_2g^{-1}$ we obtain
\begin{equation}\label{labp2}
\lab(r_2)\in H_{i(j)}.
\end{equation}
If $f$ is an edge of $p_{j+1}$, (\ref{labp2}) contradicts (b);  if $f=e_{j+1}$, then $U_2=r_2$ is empty by (\ref{labp2}) and (b), which contradicts (c).
\end{proof}

\begin{figure}
   \begin{center}
   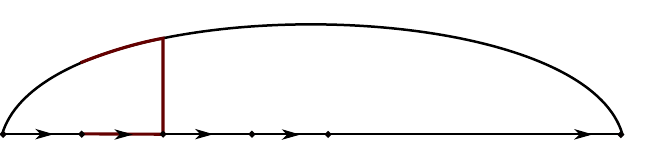
   \end{center}
   \vspace{-3mm}
   \caption{Proof of Lemma \ref{Wgeod}.}\label{fig3b}
 \end{figure}

We now return to the proof of Lemma \ref{Wgeod}. Let $q$ be a geodesic in $\G$ connecting $p_-$ to $p_+$. By our assumption, the subpath $p_2e_2\ldots p_ne_np_{n+1}$ of $p$ is geodesic. Thus we can think of $pq^{-1}$ as a geodesic quadrilateral with sides $p_1$, $e_1$, $p_2e_2\ldots p_ne_np_{n+1}$, and $q^{-1}$. By Lemma \ref{Omega} and condition (a), the $H_{i(1)}$-component $e_1$ cannot be isolated in $pq^{-1}$; by Lemma \ref{ej}, it is connected to a component $f_1$ of $q$ (see Fig. \ref{fig3b}).

Furthermore, let $q=q_1f_1 q^\prime$ and let $g$ be an edge of $\G$ connecting $(f_1)_+$ to $(e_1)_+$. Arguing as above, we conclude that $e_2$ cannot be isolated in the geodesic pentagon with sides $g$, $p_2$, $e_2$, $q_3e_3\ldots q_ne_np_{n+1}$, $(q^\prime)^{-1}$. By Lemma \ref{ej}, $e_2$ is connected to a component $f_2$ of $q^\prime$.  Thus, $q$ decomposes as $q_1f_1q_2f_2q^{\prime\prime}$. Continuing this process, we get a decomposition
$$
q=q_1f_1q_2f_2 \ldots q_nf_nq_{n+1},
$$
where each $f_j$ is an $H_{i(j)}$-component connected to $e_j$. Note that every $q_j$ has length at least $\ell (p_j)=\| U_j\|$ by (b) as $H_{i(j-1)}\lab(q_j) H_{i(j)}=H_{i(j-1)}U_j H_{i(j)}$. It follows that $\ell (q)\ge \ell(p)$, which contradicts the assumption that $p$ is not geodesic.
\end{proof}

\begin{prop}\label{scW}
Let $p$ be a path in $\G$. Suppose that $\lab(p)\in \mathcal H^\ast $ and the following conditions hold for all $i,j\in I$.
\begin{enumerate}
\item[(a)] If a letter $a\in H_i$ occurs in $\lab (p)$, then $\di(1, a) > 5D$, where $D$ is the constant provided by Lemma \ref{Omega}.
\item[(b)] If $a\in H_i$ and $b\in H_j$ are labels of two consecutive edges in $p$, then $H_{i}\cap H_{j}=\{1\}$ (in particular, $i\ne j$).
\end{enumerate}
Then $p$ is a $2$-attracting geodesic.
\end{prop}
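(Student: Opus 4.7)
I would prove the proposition in two stages: first that $p$ is geodesic, then that $p$ is $2$-attracting. For geodesicity, apply Lemma~\ref{Wgeod} with all intermediate words $U_j$ taken to be empty. With this choice, hypothesis (a) of Lemma~\ref{Wgeod} is exactly hypothesis (a) of the present proposition; hypothesis (b) of the lemma is vacuous (the length condition on $U_j$ is trivially satisfied when $U_j$ is empty); and hypothesis (c) of the lemma, demanding $H_{i(j-1)}\ne H_{i(j)}$ whenever $U_j$ is empty for $j=2,\ldots,n$, follows from the proposition's hypothesis (b), since $H_{i(j-1)}\cap H_{i(j)}=\{1\}$ forces $i(j-1)\ne i(j)$. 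Hence $p$ is geodesic.

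For the attracting property, fix vertices $x,y$ with $\d(x,y)-\d(x,p)-\d(y,p)\ge 2$ and any geodesic $r$ in $\G$ from $x$ to $y$. Choose vertices $u,v$ of $p$ closest to $x,y$, set $\alpha=\d(x,u)$ and $\beta=\d(y,v)$, and let $s_2=p[u,v]$ (geodesic as a subpath of $p$) with consecutive edges $e_1,\ldots,e_L$, where $e_j$ is labelled $a_j\in H_{i(j)}$. By the triangle inequality, $L\ge\d(x,y)-\alpha-\beta$. Form the geodesic quadrilateral $Q$ with consecutive sides $s_1=[x,u]$, $s_2$, $s_3=[v,y]$, $s_4=r^{-1}$. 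By hypothesis (a), $\widehat\d_{H_{i(j)}}(1,a_j)>5D$ for every $j$; if $e_j$ were an isolated $H_{i(j)}$-component of the $4$-gon $Q$, Lemma~\ref{Omega} would force $\widehat\d_{H_{i(j)}}(1,a_j)\le 4D$, a contradiction, so each $e_j$ must be connected to an $H_{i(j)}$-component of $s_1$, $s_3$, or $r^{-1}$. Since $p$ is geodesic, its $H_i$-components are isolated single edges, so two distinct $e_j,e_k$ cannot both match the same component of any side without being connected to each other in $p$ and violating isolation. The matching is therefore injective, so $s_1$ and $s_3$ absorb at most $\alpha+\beta$ edges of $s_2$ in total, and at least $L-\alpha-\beta$ edges of $s_2$ are matched to (single-edge) $H_{i(j)}$-components of $r$.

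The delicate final step is to promote these coset-level matches into a literal common subpath of $r$ and $p$ of length at least $L-2\ge\d(x,y)-\alpha-\beta-2$. My plan is to exploit that $r$ is also geodesic, so its $H_i$-components are isolated single edges: if a matched pair $(e_j,f)$ satisfies $e_j\ne f^{\pm 1}$ as unoriented edges, then a nontrivial $H_{i(j)}$-connection edge links the endpoints of $e_j$ and $f$, and splicing it into $r$ next to $f$ either yields two consecutive $H_{i(j)}$-edges on $r$ (impossible for a geodesic) or produces a strictly shorter $x$-to-$y$ path, contradicting the geodesicity of $r$. A parallel argument, combined with the constraint $|a_j-a_k|\le (k-j)+2$ on the positions in $r$ of matched edges (derived from the coset connections together with the subpath of $p$ from $e_j$ to $e_k$), rules out order reversals in the interior of the matched region. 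The net effect is a contiguous run of literally shared edges on $r$ and $s_2$ with at most one edge of slack at each end, yielding the required bound. The main obstacle is precisely this identification of ``matched'' with ``shared''; the boundary slack of one edge on each side is inherent to the combinatorics and is exactly what the constant $2$ in the statement encodes.
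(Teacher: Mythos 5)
Your first stage (geodesicity via Lemma~\ref{Wgeod} with all $U_j$ empty) is correct and matches the paper exactly. The second stage has two genuine gaps.

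First, you choose $u,v$ merely as points of $p$ realizing $\d(x,p)$ and $\d(y,p)$, without additionally insisting that the subpath $p_0=p[u,v]$ be of minimal length among all such choices. Without that minimality, your own counting bound only gives that at least $L-\alpha-\beta$ of the $L$ edges of $p_0$ are coset-matched to components of $r$; combined with $L\geq \d(x,y)-\alpha-\beta$ this yields at best $\d(x,y)-2\alpha-2\beta$ matched edges, which is far short of the target $\d(x,y)-\alpha-\beta-2$. Indeed, within your own write-up the counting step concludes ``$L-\alpha-\beta$ matched'' but the final sentence suddenly claims a shared subpath of length ``$L-2$''; these do not agree. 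In the paper's argument, $u,v$ are chosen to minimize $\ell(p_0)$, and it is precisely this minimality that forces \emph{no} component of $p_0$ to connect to a component of $s_1$ or $s_3$: if one did, a triangle-inequality computation shows the endpoint of that component is another closest point on $p$, contradicting minimality. That is the source of the constant slack $2$ rather than slack growing with $\alpha+\beta$.

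Second, the ``promotion'' from coset-level matches to literal shared edges is sketched but not proved, and the splicing/order-reversal heuristic you describe does not obviously close the gap. The correct mechanism is an inductive walk along $p_0$ and $r$: one shows $e_1$ connects to some component $f_1$ of $r$; then, after cutting off the connecting edge, $e_2$ must connect (by hypothesis~(b) it cannot connect to that cutting edge, and by the minimality argument not to $s_1,s_3$) to the edge $f_2$ of $r$ immediately after $f_1$; and crucially the common endpoint $(e_1)_+=(e_2)_-$ and the common endpoint $(f_1)_+=(f_2)_-$ lie simultaneously in the same left $H_{i_1}$-coset and the same left $H_{i_2}$-coset, hence differ by an element of $H_{i_1}\cap H_{i_2}=\{1\}$. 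This is where hypothesis~(b) is used a second time — a use entirely absent from your attracting-property argument — to conclude $e_2=f_2$ outright. Iterating gives $e_j=f_j$ for $2\leq j\leq n-1$, so $p$ and $r$ literally share a subpath of length $n-2$. Without invoking (b) at this point, there is no reason the matched edges need coincide rather than merely lie in a common coset.
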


\begin{proof}
It is easy to see that conditions (a) and (b) imply all the assumptions of Lemma \ref{Wgeod} (paths $p_j$ are trivial in this case). Thus, $p$ is geodesic.

To show that $p$ is $2$-attracting, let $x, y\in G$ be any vertices of $\G$ such that
\begin{equation}\label{Eq:daxyp}
\da(x, y)-\da(x,p)-\da(y,p)>2.
\end{equation}
Let $q$ be a geodesic going from $x$ to $y$. Let $u$, $v$ be vertices on $p$ such that
\begin{equation}\label{Eq:lslt}
\da(x,u)=\da(x,p)\;\;\; {\rm and}\;\;\; \da(y,v)=\da(y,p).
\end{equation}
Without loss of generality, we can assume that $p$ first passes through $u$ and then through $v$; we denote by $p_0$ the segment of $p$  between $u$ and $v$. Furthermore, we assume that $u$ and $v$ are chosen among all vertices satisfying (\ref{Eq:lslt}) so that $\ell(p_0)=\da(u,v)$ is minimal. We denote by $s$ and $t$ geodesics in $\G$ connecting $x$ to $u$ and $y$ to $v$, respectively.

\begin{figure}
   \begin{center}
   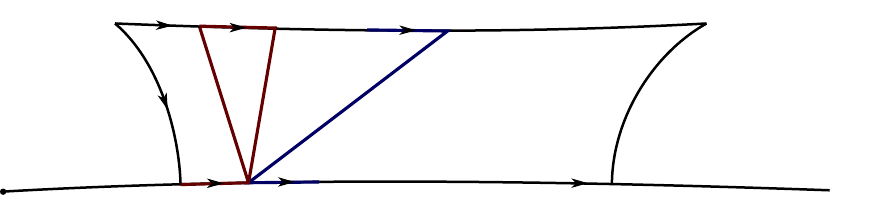
   \end{center}
   \vspace{-3mm}
   \caption{Proof of Proposition \ref{scW}. Edges of the same color are labeled by letters from the same $H_i$.}\label{fig4}
 \end{figure}

Note that every edge of $p_0$ is an $H_i$-component for some $i$, which cannot be isolated in the geodesic quadrilateral $sp_0t^{-1}q^{-1}$ by (a). Since $p$ is geodesic, every component of $p_0$ is isolated in $p_0$. Further, suppose that a component $e$ of $p_0$ is connected to a component $g$ of $s$. Then $\da(g_-, e_+)\le 1\le \da(g_-, u)$ and, consequently, $\da (x,u)=\da (x,e_+)$, which contradicts our assumption that the choice of $u$ and $v$ minimizes $\ell(p_0)$. Therefore, no component of $p_0$ is connected to a component of $s$. Similarly, we show that no component of $p_0$ is connected to a component of $t$.

Let
$$
p_0=e_1e_2\ldots e_n,
$$
where every $e_i$ is an edge. Note that
$$
\da(x,y)\le \da(x,u)+\da(u,v)+\da(v,y)=\da(x,p)+n+\da(y,p).
$$
Combining with (\ref{Eq:daxyp}) we obtain
\begin{equation}\label{Eq:nge2}
n\ge \da(x,y)-\da(x,p)-\da(y,p)\ge 2.
\end{equation}
As explained in the previous paragraph, $e_1$ is a component of $p_0$ connected to a certain component $f_1$ of $q$. Assume for definiteness that $e_1$, $f_1$ are $H_{i_1}$-components for some $i_1\in I$ and let $c_1$ denote the edge of $\G$ labelled by a letter from $H_{i_1}$ that connects $(f_1)_+$ to $(e_1)_+$.

Let $q=q^\prime f_1q_1$. The edge $e_2$ is a component of $p$, which cannot be isolated in the geodesic quadrilateral $c_1(e_2\ldots e_{n})t^{-1} (q_1)^{-1} $ by (a). Obviously, $e_2$ cannot be connected to $c_1$ by (b). As we showed above, $e_2$ cannot be connected to a component of $t$ either. Therefore, $e_2$ is connected to a component $f_2$ of $q_1$. Note that there exist edges in $\G$ connecting $(f_1)_-$ to $(e_1)_+=(e_2)_-$ and $(e_1)_+=(e_2)_-$ to $(f_2)_+$  (see Fig. \ref{fig4}). Therefore, we have $\da( (f_1)_-, (f_2)_+)=2$. Since $q$ is geodesic, $f_2$ must be the first edge of $q_1$. Assume for definiteness that $e_2$ and $f_2$ are $H_{i_2}$-components for some $i_2\in I$. Since $e_1$ is connected to $f_1$ and $e_2$ is connected to $f_2$, the vertices $a=(e_1)_+=(e_2)_-$ and $b=(f_1)_+=(f_2)_-$ belong to the same left $H_{i_1}$-coset and the same left $H_{i_2}$-coset of $G$. Therefore, $a^{-1}b \in H_{i_1}\cap H_{i_2}$. Using (b), we obtain $a=b$.

If $n>2$, we argue as above and show that $e_3$ is connected to the edge $f_3$ of $q$ next to $f_2$ and $f_2$ shares its endpoints with $e_2$. The latter condition means $e_2=f_2$. Continuing this way, we obtain a decomposition
$$
q=q^\prime f_1\ldots f_n q^{\prime\prime},
$$
where for each $2\le j\le n-1$, we have $f_j=e_j$. Thus, $p$ and $q$ share a common subpath of length at least $$n-2\ge \da(x,y)-\da(x,p)-\da(y,p)-2$$ (see (\ref{Eq:nge2})).
\end{proof}

\subsection{The $W(\xi, \sigma)$ small cancellation condition}\label{Sec:WSCC}

Constructing words satisfying the  $C(\mu, \e, \rho)$ and $C_1(\mu, \e, \rho)$ small cancellation conditions discussed in Section \ref{Sec:Ols} is not an easy task. Following  \cite{Osi10}, we now introduce an easily verifiable combinatorial condition that implies $C_1(\mu, \e, \rho)$.

Throughout this section, we fix a group $G$, a hyperbolically embedded collection of subgroups $\Hl$, and some $X\subseteq G$ such that $\Hl\h (G,X)$. Note that the latter assumption is stronger that condition ($\ast$) from the previous section. Furthermore, we let $\mathcal H$ and $\mathcal A$ denote the alphabets defined by (\ref{calA}).  We say that two letters $a$, $b$ of a word $W\in \mathcal A^\ast$ are \emph{cyclically consecutive} if they are consecutive or if $a$ (respectively, $b$) is the last (respectively, first) letter of $W$.

\begin{defn}\label{Defn:WMlr}
A set of words $\mathcal R\subseteq \mathcal H^\ast$ satisfies the \emph{$W(\xi, \sigma)$ condition} for some $\xi, \sigma \ge 0$ if the following hold.

\begin{enumerate}
\item[(W$_1$)] If $a\in H_i$ and $b\in H_j$ are cyclically consecutive letters of some word from $\mathcal R$, then $H_{i}\cap H_{j}=\{1\}$.
\item[(W$_2$)] If a letter $a\in H_i$ occurs in some word from $\mathcal R$, then $\di(1, a) \ge \xi $.
\item[(W$_3$)] For each letter $a\in \mathcal H$, there is at most one occurrence of $a^{\pm 1}$ in all words from $\mathcal R$. More precisely, let $R, S\in \mathcal R$. Suppose that $R\equiv R_1aR_2$ and $S\equiv S_1a^\e S_2$ for some $a\in \mathcal H$, $R_1, R_2, S_1, S_2\in \mathcal A$, and $\e=\pm 1$. Then $\e=1$, and $R_i\equiv S_i$ for $i=1,2$; in particular, $R\equiv S$.
\item[(W$_4$)] For every $R\in \mathcal R$, we have $\| R\| \ge \sigma$.
\end{enumerate}
\end{defn}

\begin{rem}\label{Rem:KG}
Although the condition $W(\xi,\sigma)$ makes sense for all groups with hyperbolically embedded subgroups, it is not really useful for groups with non-trivial finite radical. Indeed, in non-degenerate cases (e.g., if all $H_i$ are infinite), one can use Corollary \ref{Cor:ICC}, (W$_1$), and (W$_4$) to derive that the existence on a non-empty set $\mathcal R$ satisfying $W(\xi, \sigma)$ for some $\xi\ge 1$ and $\sigma \ge 2$ implies $K(G)=\{ 1\}$.
\end{rem}

We consider a particular example. Suppose that $\Hl=\{ A, B, ...\} $, $A\cap B=\{1\}$, and there exist infinite order elements $a\in A$ and $b\in B$. For any $m,n\in \mathbb N$, we think of $a^m$ and $b^n$ as single letters from the alphabet $\mathcal H$ that belong to $A$ and $B$, respectively. For any $\ell \in \mathbb N$, let $\mathcal P(\ell)$ denote the set of all words
\begin{equation}\label{Eq:Si}
P_i = a^{i\ell+1}b^{i\ell+1} a^{i\ell+2}b^{i\ell+2} \ldots a^{i\ell+\ell}b^{i\ell+\ell}, \;\;\;\;\; i\in \NN.
\end{equation}

\begin{lem}\label{Lem:sc}
For every $\xi, \sigma >0$, there exists $N\in \NN$ such that for any integer $\ell \ge N$, the set $\mathcal P(\ell)$ satisfies the $W(\xi, \sigma )$ condition.
\end{lem}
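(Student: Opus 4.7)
My plan is to choose $N$ large enough so that each of the four conditions $(W_1)$--$(W_4)$ holds for the set $\mathcal{P}(\ell)$ whenever $\ell\ge N$, verifying them one at a time.

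First I would observe that $(W_1)$ is automatic for any $\ell$. Indeed, reading along $P_i$, the letters alternate between $A$ and $B$, so any two consecutive letters lie in distinct subgroups of the collection $\{A, B\}$, and by assumption $A\cap B=\{1\}$. For the cyclically consecutive pair consisting of the last letter $b^{i\ell+\ell}\in B$ and the first letter $a^{i\ell+1}\in A$, the same argument applies. Condition $(W_4)$ is equally straightforward: each $P_i$ contains $2\ell$ letters of the alphabet $\mathcal{H}$, so $\|P_i\|=2\ell$, and we need only require $N\ge \sigma/2$.

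The main ingredient is condition $(W_2)$, which is where we use hyperbolic embedding. Since $\{A,B,\ldots\}\h G$, part (b) of Definition \ref{hedefn} implies that for each fixed $n$, the set $\{h\in A\mid \widehat{\d}_A(1,h)\le n\}$ is finite, and similarly for $B$. Because $a$ has infinite order, the cyclic subgroup $\langle a\rangle \le A$ is infinite, so all but finitely many powers $a^k$ lie outside the $\widehat{\d}_A$-ball of radius $\xi$ about $1$. Thus, there exists $M_1$ such that $\widehat{\d}_A(1, a^k)\ge \xi$ for every integer $k$ with $|k|\ge M_1$, and analogously some $M_2$ works for $b$. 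Taking $N\ge \max\{M_1, M_2, \sigma/2\}$, we ensure that every letter $a^{i\ell+j}$ or $b^{i\ell+j}$ appearing in any $P_i\in \mathcal{P}(\ell)$ satisfies $i\ell+j\ge \ell+1\ge N+1\ge \max\{M_1,M_2\}+1$, giving $(W_2)$.

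Finally, condition $(W_3)$ follows from bookkeeping on the exponents: the letters of $\mathcal{H}$ appearing in words of $\mathcal{P}(\ell)$ are precisely $\{a^k, b^k : k\ge 1\}$, all of which are distinct elements of $A\cup B$ because $a$ and $b$ have infinite order; moreover, since only strictly positive powers occur, no letter $a^k$ (or $b^k$) appears together with its inverse $a^{-k}$ (or $b^{-k}$) anywhere in $\mathcal{P}(\ell)$. Finally, each positive integer $k\ge 1$ belongs to a unique interval $\{i\ell+1,\ldots, i\ell+\ell\}$, so the letter $a^k$ occurs in exactly one word $P_i$ and in exactly one position of that word (and similarly for $b^k$), yielding the uniqueness required by $(W_3)$. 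I expect $(W_2)$ to be the only non-trivial point, precisely because it is where the hyperbolic embedding hypothesis enters; the other three conditions are combinatorial consequences of the explicit form of the words $P_i$.
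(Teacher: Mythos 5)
Your proof is correct and follows essentially the same approach as the paper's: dismiss $(W_1)$, $(W_3)$, $(W_4)$ by direct inspection of the form of $P_i$ and use the local finiteness of the generalized metrics $\widehat{\d}_A$, $\widehat{\d}_B$ from Definition \ref{hedefn}(b), together with the infinite order of $a$ and $b$, to ensure $(W_2)$ for $\ell$ large. The paper simply treats $(W_1)$ and $(W_3)$ as ``obvious'' where you spell out the bookkeeping; otherwise the arguments coincide.
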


\begin{proof}
Conditions (W$_1$) and (W$_3$) obviously hold for each $P_i$. Since $\Hl\h G$ and the orders of $a$ and $b$ are infinite, we have $$\lim_{n\to \infty} \widehat{\d}_{A}(1, a^n)=\lim_{n\to \infty} \widehat{\d}_{B}(1, b^n)=\infty$$ by part (b) of Definition \ref{hedefn}. Therefore, we can ensure that (W$_2$) holds for any given $\xi>0$ by taking $\ell $ large enough. Clearly, $\| P_i\|=2\ell$. Thus we can also ensure (W$_4$) for any given $\sigma$ by taking $\ell\ge \sigma/2$.
\end{proof}

The following theorem relates $W(\xi,\sigma)$ to the small cancellation conditions defined in Definition \ref{DefSC}. Recall that a set of words in some alphabet is said to be \emph{symmetric} if it is closed under taking inverses and cyclic shifts. The \emph{symmetrization} of a set of words $\mathcal R$ is the smallest symmetric set of words containing $\mathcal R$; clearly, it coincides with the set of all cyclic shifts of $R\in \mathcal R$ and their inverses.

\begin{lem}\label{W->C}
For any positive constants $\e$, $\mu$, and $\rho$, there exist positive $\xi$ and $\sigma$ such that, for any set of words $\mathcal W=\{ W_j\}_{j\in J}\subseteq \mathcal A^\ast$ satisfying $W(\xi, \sigma)$, the following hold.

\begin{enumerate}
\item [(a)] The symmetrization of $\mathcal W$ satisfies  $C_1(\e,\mu,\rho)$.
\item [(b)] For every $j\in J$, suppose that the first (respectively, last) letter of $W_j$ belongs to $H_{j_1}$ (respectively, $H_{j_2}$) for some $j_1, j_2\in I$. Let $\{ x_j\}_{j\in J}$ be a set of letters from $X$ such that the element represented by $x_j$ in $G$ does not belong to $H_{j_2}H_{j_1}$. Then the symmetrization of the set $\{ x_jW_j\}_{j\in J}$ satisfies  $C_1(\e,\mu,\rho)$.
\end{enumerate}
\end{lem}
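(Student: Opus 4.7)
The plan is to choose $\xi>5D$ (where $D$ is the constant from Lemma~\ref{Omega}) and then $\sigma$ large in terms of $\e$, $\mu$, and $\rho$, so that the three requirements of $C_1(\e,\mu,\rho)$ follow from the attracting geodesic property of Proposition~\ref{scW} combined with condition (W$_3$). Throughout, let $\mathcal R$ denote the symmetrization under consideration, so that each $R\in\mathcal R$ is a cyclic shift of some $W_j^{\pm 1}$ (in part (a)) or of some $(x_jW_j)^{\pm 1}$ (in part (b)).

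First I verify that every $R\in\mathcal R$ labels a geodesic with a strong attracting property. For part (a), conditions (W$_1$) and (W$_2$) with $\xi>5D$ are precisely the hypotheses (a), (b) of Proposition~\ref{scW}, so each $R$ labels a $2$-attracting geodesic in $\G$. For part (b), each cyclic shift of $(x_jW_j)^{\pm 1}$ has the form $U_1a_1U_2a_2\ldots a_nU_{n+1}$ in which exactly one $U_k$ is a single $X$-letter ($x_j$ or $x_j^{-1}$) and the remaining $U_k$ are empty; (W$_1$) supplies condition (c) of Lemma~\ref{Wgeod}, while the hypothesis $x_j\notin H_{j_2}H_{j_1}$ --- inversion-invariant because $X=X^{-1}$ --- supplies condition (b) at the $X$-letter position. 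Hence each such $R$ is $(G,\mathcal A)$-geodesic; a routine adaptation of the proof of Proposition~\ref{scW}, applying Lemma~\ref{Omega} to the $\mathcal H$-components of the central subpath and absorbing the lone $X$-edge into an additive error, shows each such $R$ is attracting with a constant depending only on $\delta$. Taking $\sigma\ge\rho$ settles the length requirement.

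For condition (b) of $C_1$, suppose $U,U'$ are initial subwords of $R,R'\in\mathcal R$ with $\max\{\|U\|,\|U'\|\}\ge\mu\min\{\|R\|,\|R'\|\}$ and $U'=YUZ$ in $G$ with $\|Y\|,\|Z\|\le\e$. Read the $R$-path starting from the vertex $Y$ and the $R'$-path starting from $1$; then the geodesic labelled $U'$ from $1$ to $U'$ has both endpoints within distance $\e$ of the $R$-path, while its length is at least $\mu\sigma$. The attracting property yields a shared unoriented segment of length at least $\mu\sigma-2\e-C$ for some absolute $C$, which for $\sigma$ large contains at least one letter of $\mathcal H$. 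Condition (W$_3$) then forces $R$ and $R'$ to be cyclic shifts of the same $W_j$ (not of its inverse) with the shared segment occurring in the same orientation at matched positions of $W_j$. Comparing the starting vertices of the shared segment gives $YP_R=P_{R'}$ in $G$, where $P_R$, $P_{R'}$ are the prefixes of $R$, $R'$ preceding the shared segment; writing $R=AB$, $R'=BA$ where $A$ is the cyclic-shift prefix, a short calculation identifies $Y=A^{-1}$ in $G$, whence $YRY^{-1}=A^{-1}RA=R'$ in $G$. Condition (c) of $C_1$ is handled analogously: if $U,U'$ are disjoint subwords of the same $R\in\mathcal R$ with $U'=YU^{\pm 1}Z$, translating the $U$-geodesic to start at $v_2Y$ (where $v_2$ is the starting vertex of $U'$ in the $R$-path) and applying the attracting property produces a shared segment whose location in $R$ is pinned down by (W$_3$); in the $+1$ case this forces $v_2Y=v_1$ in $G$, so $|Y|_{\mathcal A}$ equals the combinatorial gap in $R$ between the $U$- and $U'$-blocks, and the bounds $|Y|_{\mathcal A}\le\e$ and disjointness yield $\|U\|\le\e$ and then $\|U'\|\le 3\e$; in the $-1$ case, (W$_3$) rules out a same-orientation shared segment (it would force both a letter and its inverse to appear in $\mathcal W$) while the opposite-orientation subcase yields $v_2Y=v_1U$ in $G$ and, after expansion, $\|U\|+\|U'\|\le\e$. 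In every case $\max\{\|U\|,\|U'\|\}$ is bounded by a fixed multiple of $\e$, which is $<\mu\|R\|$ once $\sigma$ is chosen large enough.

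The main obstacle is the algebraic bookkeeping in condition (b): converting the geometric information that $R$ and $R'$ share a long labelled segment into the precise identity $YRY^{-1}=R'$ requires pinning down, via (W$_3$), the exact cyclic shifts of $W_j$ underlying each relator and identifying $Y$ with the corresponding conjugating prefix. A secondary difficulty in part (b) of the lemma is that Proposition~\ref{scW} does not directly apply to relators containing the $X$-letter; the hypothesis $x_j\notin H_{j_2}H_{j_1}$ is indispensable here, as it prevents the $X$-letter from being absorbed by adjacent $\mathcal H$-components and therefore preserves the component structure on which Lemma~\ref{Omega} and the adapted attracting argument depend.
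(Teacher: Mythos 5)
Part (a) of your proof is essentially the paper's argument: $\xi>5D$, Proposition~\ref{scW} gives $2$-attracting geodesics, the attracting property together with (W$_3$) pins down the cyclic shifts and yields $YRY^{-1}=R'$, and condition (c) of $C_1$ gives a repeated letter contradicting (W$_3$). That part is fine.

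Part (b), however, has a genuine gap. You assert that ``a routine adaptation of the proof of Proposition~\ref{scW} \ldots shows each such $R$ is attracting with a constant depending only on $\delta$.'' This is not routine, and it is not clear that it is even true as stated. The mechanism of Proposition~\ref{scW} is that \emph{every} edge of the central subpath is an $\mathcal H$-component; the argument matches each such component to an edge of a competing geodesic and then uses the trivial-intersection hypothesis to force the matched edges to share endpoints. Once an $X$-edge is inserted, the matching cannot propagate across it: a competing geodesic may reach the vertex at one end of the $X$-edge and then use a \emph{different} $\mathcal A$-edge to traverse the same group element (the alphabet $\mathcal A$ is abstract and may have several letters over one element, or the gap of length one identified in your sketch may be filled by a letter from some $H_i$), breaking the shared segment into two disjoint pieces on either side of the $X$-letter. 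The definition of $c$-attracting requires a single shared segment, so the quantity actually obtained can be roughly half the required length; no additive constant $C$ fixes this, and your subsequent uses of ``length at least $\mu\sigma-2\e-C$'' do not go through.

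The paper explicitly flags this difficulty (``the corresponding paths $p$ and $p'$ are not necessarily $2$-attracting'') and takes a genuinely different route: it subdivides the geodesics $q,q'$ labelled by the pieces around their single $X$-letters, applies Olshanskii's polygon lemma (Lemma~\ref{N123}) to the resulting octagon with $A=\{q_1,q_2,q_1',q_2'\}$ and $B=\{a,b,r,r'\}$ to extract a pair of $13\delta$-close segments lying on \emph{distinct} $\mathcal H$-only sides, and only then applies Proposition~\ref{scW} to one of those segments. That step is precisely what your ``routine adaptation'' tries to avoid and what your sketch is missing. The enlarged constant $\sigma\mu>1+\max\{10^3(2\e+2),\,8c\}$ in the paper exists exactly to make the hypotheses of Lemma~\ref{N123} hold. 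To repair your argument you would either need to prove and precisely formulate an attracting-type statement for paths with one $X$-edge (accounting for the possible gap, which forces a weaker, non-connected version of the property), or simply follow the paper and insert the Lemma~\ref{N123} octagon step.
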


\begin{rem}
Recall that $C_1(\e,\mu,\rho)$ implies $C(\e,\mu,\rho)$. Thus, the sets described in (a) and (b) also satisfy $C(\e,\mu,\rho)$.
\end{rem}

\begin{proof}
We first prove (a). Let $\e$, $\mu$, and $\rho$ be any positive numbers. Let $\xi$ be any constant satisfying
\begin{equation}\label{x5D}
\xi > 5D,
\end{equation}
where $D$ is the constant from Lemma \ref{Omega}. Further, we choose large enough $\sigma$ so that
\begin{equation}\label{Eq:sigma}
\sigma > \max\left\{ \frac{2\e+2}{\mu}, \rho\right\}.
\end{equation}

Let $\mathcal W$ be as above and let $\mathcal R$ denote the symmetrization of $\mathcal W$. Conditions (W$_1$), (W$_2$), and inequality (\ref{x5D}) ensure that every path in $\G$ labeled by a word $R\in \mathcal R$ satisfies the assumptions of Proposition \ref{scW}. Therefore, every such $R$ is $(G, \mathcal A)$-geodesic. Combining this with the inequality $\sigma>\rho$ and (W$_4$), we conclude that condition (a) from Definition \ref{DefSC} holds.

 \begin{figure}
   \begin{center}
   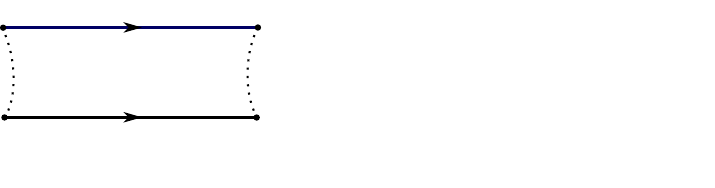
   \end{center}
   \vspace*{-3mm}
   \caption{The proof of the implication $W(\xi, \sigma)\Rightarrow C_1(\e,\mu, \rho)$.}\label{fig5}
 \end{figure}

Further, suppose that there are two relations $R,R^\prime \in \mathcal R$ such that $R\equiv UV$, $R^\prime \equiv U^\prime V^\prime$, $U^\prime = YUZ$ in $G$ for some words $Y,Z\in \mathcal A^\ast$, and inequalities (\ref{piece1}), (\ref{piece2}) hold. Without loss of generality, we can assume that $\| U^\prime \| \ge \| U\|$; therefore, (\ref{piece1}) can be rewritten as
\begin{equation}\label{U'RR'}
\| U^\prime\| \ge \mu \min\{ \| R\|, \| R^\prime\| \}.
\end{equation}
Translating these assumptions to geometric language, we can find paths $p$ and $p^\prime$ in $\G$ such that
$$
\lab(p)\equiv U,\;\;\; \lab (p^\prime )\equiv U^\prime,
$$
and
$$
\max\{\da (p_-, p^\prime _-), \da (p_+, p^\prime_+)\} \le \e
$$
(see Fig. \ref{fig5} (a)). By (W$_1$), (W$_2$), and Proposition \ref{scW}, $p$ is a $2$-attracting geodesics. Therefore, $p^\prime$ and $p$ share a common subpath $p^{\prime\prime}$ of length at least
\begin{equation}\label{Eq:lqpp}
\begin{array}{rcl}
\ell(p^{\prime\prime}) & \ge & \ell(p^\prime) - \da (p^\prime_-, p)- \da (p^\prime_+, p)-2 \ge \\&&\\ &&
\|U^\prime\| - 2\e -2 \ge \mu \min\{ \| R\|, \| R^\prime\| \} - 2\e -2\ge \mu \sigma -2\e-2 >0.
\end{array}
\end{equation}
(We use (\ref{U'RR'}) and (\ref{Eq:sigma}) here.)

Let $\lab(p^{\prime\prime})\equiv U^{\prime\prime}$, $\lab (p)\equiv R_1U^{\prime\prime}R_2$, $\lab (p^\prime)\equiv R_1^\prime U^{\prime\prime}R_2^\prime $ (see Fig. \ref{fig5} (b)). Condition (W$_3$) implies that the cyclic shifts of $R$ and $R^\prime$ starting from $U^{\prime\prime}$ coincide; that is, we have $U^{\prime\prime}R_2VR_1\equiv U^{\prime\prime}R_2^\prime V^\prime R_1^\prime$. Reading the label of the leftmost triangle on Fig. \ref{fig5} (b), we obtain $YR_1=R_1^\prime$ in $G$. Therefore,
$$
YR = YR_1U^{\prime\prime}R_2V= R_1^\prime U^{\prime\prime}R_2VR_1R_1^{-1} =R_1^\prime U^{\prime\prime}R_2^\prime V^\prime R_1^\prime R_1^{-1}=R^\prime Y
$$
in $G$. Thus, $YRY^{-1}=R^\prime$ in $G$ and $\mathcal R$ satisfies condition (b) in Definition \ref{DefSC}.

Finally, suppose that some word $R\in \mathcal R$ contains two disjoint subwords $U$ and $U^\prime$ such that $U^\prime = YUZ$ or $U^\prime = YU^{-1}Z$ in $G$ for some words $Y$, $Z$ and (\ref{piece1}), (\ref{piece2}) hold. Arguing as above, we can find two disjoint occurrences of the same word $U^{\prime\prime}$ in $R$, which contradicts (W$_3$). Thus, condition (c) in Definition \ref{DefSC} holds.

To prove (b), we increase the parameter $\sigma$ so that, in addition to (\ref{Eq:sigma}), it satisfies
\begin{equation}\label{Eq:sigma1}
\sigma\mu > 1+\max\{ 10^3(2\e+2), 8c\},
\end{equation}
where $\delta $ is the hyperbolicity constant of $\G$ and
$$
c =  \max\{ 10^3(26\delta+2), 3 \cdot 10^4 \delta\}.
$$

Let $\mathcal S$ denote the symmetrization of the set $\{ x_jW_j\}_{j\in J}$. We first observe that every $S\in \mathcal S$ satisfies the assumptions of Lemma \ref{Wgeod} (note that $x_j\notin H_{j_2}H_{j_1}$ guarantees condition (b) of the lemma). Therefore, every $S\in \mathcal S$ is $(G, \mathcal A)$-geodesic.

Further, suppose that there are two relations $S,S^\prime \in \mathcal S$ such that $$S\equiv UV,\;\;\; S^\prime \equiv U^\prime V^\prime,$$ $U^\prime = YUZ$ in $G$ for some words $Y,Z\in \mathcal A^\ast$, and inequalities (\ref{piece1}), (\ref{piece2}) hold. As above, we assume that $\| U^\prime \| \ge \| U\|$ and rewrite (\ref{piece1}) as in (\ref{U'RR'}).

The difference with part (a) is that the words $U$ and $U^\prime$ may contain letters from the set $\{ x_j\}_{i\in I}$ and thus the corresponding paths $p$ and $p^\prime$ are not necessarily $2$-attracting. However, we can overcome this difficulty as follows. Let $aqb(q^\prime)^{-1}$ be a geodesic quadrilateral in $\G$ such that
$$
\lab(q)\equiv U,\;\;\; \lab (q^\prime )\equiv U^\prime,\;\;\; \lab(a)\equiv Y, \;\;\; \lab(b)\equiv Z.
$$
By (\ref{piece2}), we have
$$
\max\{\ell(a), \ell(b)\} \le \e
$$
Note that $q$ and $q^\prime$ are geodesic. We subdivide them as follows. If $U$ contains some $x\in \{ x_j^{\pm 1}\}_{i\in I}$, i.e., $U\equiv U_1x U_2$, we let $q=q_1rq_2$, where $\lab (q_1)\equiv U_1$, $\lab (r) \equiv x$, and $\lab (q_2)\equiv U_2$; otherwise, we let $q_1=q$ and let $r$, $q_2$ be the trivial paths. Similarly, we define a subdivision $q^\prime=q_1^\prime r^\prime q_2^\prime$ depending on whether $U^\prime $ contains a letter from $\{ x_j^{\pm 1}\}_{i\in I}$ or not (see Fig. \ref{Subdiv}).

Further, we divide the sides of the geodesic octagon $aq_1rq_2b(q_1^\prime r^\prime q_2^\prime)^{-1}$ (some sides may be trivial) into the sets
$$
A=\{ q_1, q_2, q_1^\prime, q_2^\prime\},\;\;\; {\rm and }\;\;\; B=\{ a,b, r, r^\prime\}
$$
and let $\alpha$ (respectively, $\beta$) denote the sum of lengths of sides from the set $A$ (respectively, $B$). By our construction, we have $\beta \le 2\e +2$. By \eqref{U'RR'}, (W$_4$), and \eqref{Eq:sigma1}, we have
$$
\alpha \ge \ell (q^\prime)- 1=\| U^\prime\| -1 \ge \mu \min\{ \| R\|, \| R^\prime\| \} -1\ge  \mu\sigma -1 > \max\{ 10^3\beta, 8c\}.
$$
Thus, we can apply Lemma \ref{N123} and find two segments $p$ and $p^\prime$ of two distinct sides from $A$ such that $$\ell (p^\prime)> 10^{-3}c\ge 26\delta +2$$ and $$\max\{\d(p_-, p^\prime_-),\d(p_+, p^\prime_+)\}\le 13\delta.$$ Since a letter from $\{ x_j^{\pm 1}\}_{i\in I}$ can occur in $\lab(q)$ at most once, the word $\lab(p)$ does not contain any such letters. As in part (a), we obtain that $p$ is a $2$-attracting geodesic by Proposition \ref{scW}. Therefore, $p$ and $p^\prime$ share a common subpath $p^{\prime\prime}$ of length at least
$$
\ell(p^{\prime\prime}) \ge \ell(p^\prime) - \da (p^\prime_-, p)- \da (p^\prime_+, p)-2 > 26\delta +2-13\delta - 13\delta -2 =0.
$$
Since $q$ and $q^\prime$ are geodesic,  $p$ and $p^\prime$ cannot simultaneously be subpaths of $q$ or $q^\prime$. Without loss of generality, we can assume that $p$ is a subpath of $q$ and $p^\prime$ is a subpath of $q^\prime$. Now, the equality $YRY^{-1}\equiv R^\prime$ in $G$ is derived exactly as in part (a). This gives us condition (b) in Definition \ref{DefSC}.

\begin{figure}
   \begin{center}
   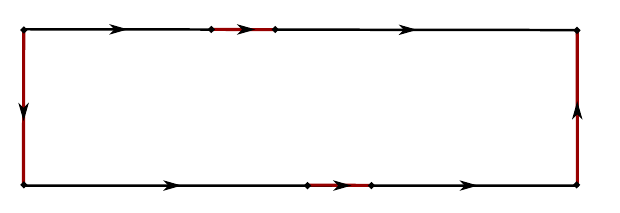
   \end{center}
   \vspace*{-3mm}
   \caption{The geodesic octagon $aq_1rq_2b(q_1^\prime r^\prime q_2^\prime)^{-1}$. Sides from the set $B$ are highlighted in red.}\label{Subdiv}
 \end{figure}

The proof of condition (c) in Definition \ref{DefSC} is similar. Assuming that some word $S\in \mathcal S$ contains two disjoint subwords $U$ and $V$ such that $U^\prime = YUZ$ or $U^\prime = YU^{-1}Z$ in $G$ and (\ref{piece1}), (\ref{piece2}) hold, we argue as above and find two disjoint occurrences of a certain non-empty word in $R$. This contradicts (W$_3$).
\end{proof}

We conclude this section with the analogue of the Greendlinger lemma for the $W(\xi,\rho)$ small cancellation condition.

\begin{prop}\label{Prop:GL}
Let $G$ be a group, $\Hl$ a collection of subgroups of $G$, $X$ a subset of $G$ such that $\Hl\h (G,X)$. Let $\mathcal H$ and $\mathcal A$ denote the alphabets defined by (\ref{calA}). For every $\nu >0$, there exist constants $\xi, \sigma > 0$ such that the following holds.

Let $\mathcal R$ be a set of words in the alphabet $\mathcal H$ satisfying $W(\xi, \sigma)$. Suppose that a non-trivial $(G,\mathcal A)$-geodesic word $W\in \mathcal A^\ast$ represents the identity in $G/\ll \mathcal R\rr $. Then there exists $R\in \mathcal R$ such that a cyclic shift of $R^{\pm 1}$ and $W$ have a common subword $U$ of length $\| U\| \ge (1-\nu)\| R\|$.
\end{prop}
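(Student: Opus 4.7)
My plan is to reduce to Olshanskii's Greendlinger lemma (Lemma~\ref{GOL}) via Lemma~\ref{W->C}, and then strengthen its conclusion --- which only produces a contiguity subdiagram with short ``gaps'' rather than a literal overlap --- by using that, under $W(\xi,\sigma)$, any lift to $\G$ of the boundary of an $\mathcal R$-face is a $2$-attracting geodesic (Proposition~\ref{scW}).

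First I would set the parameters. Given $\nu>0$, pick $\mu=\min\{1/16,\,\nu/26\}$, so that $1-13\mu\ge 1-\nu/2$, and let $\e_0,\rho_0>0$ be provided by Lemma~\ref{GOL} for this $\mu$. Feed $(\e_0,\mu,\rho_0)$ into Lemma~\ref{W->C}(a) to obtain $\xi_0,\sigma_0>0$ such that $W(\xi_0,\sigma_0)$ makes the symmetrization $\overline{\mathcal R}$ satisfy $C(\e_0,\mu,\rho_0)$. Finally, take $\xi=\max\{\xi_0,\,5D+1\}$ (with $D$ as in Lemma~\ref{Omega}, so that Proposition~\ref{scW} is available) and $\sigma=\max\{\sigma_0,\,\rho_0,\,(8\e_0+4)/\nu\}$. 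Since $\ll\mathcal R\rr=\ll\overline{\mathcal R}\rr$, the quotients coincide.

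Now, given a non-trivial $(G,\mathcal A)$-geodesic word $W$ representing $1$ in $G/\ll\mathcal R\rr$, I would apply Lemma~\ref{GOL} to the degenerate decomposition $W\equiv W\cdot 1\cdot 1\cdot 1$ (in the spirit of the remark following the lemma, which permits any $n$ in place of $4$). Contiguity degrees to the trivial sides vanish, so the lemma produces a diagram $\Delta$, an $\overline{\mathcal R}$-face $\Pi$ whose boundary label is a cyclic shift of $R^{\pm 1}$ for some $R\in\mathcal R$, and an $\e_0$-contiguity subdiagram $\Gamma$ of $\Pi$ to $p_1$ with $\partial\Gamma=s_1q_1s_2q_2$ such that $q_1\subseteq\partial\Pi$, $q_2\subseteq p_1$, $\ell(s_i)\le\e_0$, and $\ell(q_1)=\|R\|\cdot(\Pi,\Gamma,p_1)\ge(1-\nu/2)\|R\|$. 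The key step is then to lift $\Gamma$ into $\G$ via the labelling map: $q_1$ lifts to a subpath $\hat q_1$ of some lift $\hat p$ of $\partial\Pi$, which is a path in $\G$ of length $\|R\|$ labelled by a cyclic shift of $R^{\pm 1}$. By conditions (W$_1$), (W$_2$) and $\xi>5D$, Proposition~\ref{scW} makes $\hat p$ a $2$-attracting geodesic. The endpoints $x,y$ of the lift $\hat q_2$ of $q_2$ satisfy $\da(x,\hat p),\da(y,\hat p)\le\e_0$ (via the lifts of $s_1,s_2$), and $\hat q_2$ is itself geodesic since $W$ is $(G,\mathcal A)$-geodesic. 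Hence, provided $\ell(\hat q_2)\ge 2\e_0+2$, the $2$-attracting property forces $\hat q_2$ to share an unoriented subsegment $\hat r$ with $\hat p$ of length at least $\ell(\hat q_2)-2\e_0-2$.

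To conclude, I would combine estimates: from the geodesic quadrilateral $s_1q_1s_2q_2$ we have $\ell(q_2)\ge\ell(q_1)-2\e_0\ge(1-\nu/2)\|R\|-2\e_0$, whence $\ell(\hat r)\ge(1-\nu/2)\|R\|-4\e_0-2\ge(1-\nu)\|R\|$ by the choice of $\sigma$ (note that $\|R\|\ge\sigma\ge(8\e_0+4)/\nu$). The label of $\hat r$ is simultaneously a subword of $W$ and, since $\hat r\subseteq\hat p$, a subword of the cyclic shift of $R^{\pm 1}$ labelling $\hat p$, which is the required common subword $U$. The main obstacle I anticipate is the lifting step: the boundary of $\Pi$ is a loop in $\Delta$, but under $\lab$ it becomes an open path $\hat p$ in $\G$ (since $\lab(\partial\Pi)$ need not be trivial in $G$), and one must ensure that the shared subsegment supplied by the $2$-attracting property actually lies within a single lift of $\partial\Pi$ near $\hat q_1$, rather than drifting elsewhere along $\hat p$ (or ``wrapping around''). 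This is forced by the proximity of the endpoints of $\hat q_2$ to those of $\hat q_1\subseteq\hat p$, together with $\ell(\hat r)$ being comparable to $\|R\|$.
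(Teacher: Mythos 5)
Your proof is correct and follows essentially the same route as the paper's: choose $\mu$ small enough that the Greendlinger bound $1-13\mu$ beats $1-\nu$ by a margin, invoke Lemma~\ref{GOL} (via Lemma~\ref{W->C}) to get an $\e$-contiguity subdiagram $\Gamma$ with contiguity degree close to $1$, map $\partial\Gamma$ into $\G$, observe that the subpath coming from $\partial\Pi$ is a $2$-attracting geodesic by Proposition~\ref{scW} (using (W$_1$), (W$_2$) and $\xi>5D$), and let the $2$-attracting property convert the contiguity into a literal shared subword. The only cosmetic difference is that the paper applies the $2$-attracting property to $q_1$ itself (so the shared segment is automatically inside $q_1$), whereas you apply it to the full length-$\|R\|$ lift $\hat p$ of $\partial\Pi$; this is equally valid since any subpath of $\hat p$ is labelled by a subword of a single cyclic shift of $R^{\pm 1}$, and $\hat p$ has length exactly $\|R\|$, so the ``wrapping around'' you worry about at the end cannot occur and no further argument is needed there.
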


\begin{proof}
We choose any $\mu \in (0, 1/16]$ such that $\mu < \nu/13$.
Further, let $\rho$, $\e$ be the corresponding constants provided by Lemma \ref{GOL} for the group $G$. By Lemma \ref{W->C}, we can choose  $\xi$ and $\sigma$ such that, for any set of words $\mathcal R$ in the alphabet $\mathcal H$ satisfying $W(\xi, \sigma)$, the symmetrization of $\mathcal R$ satisfies $C(\e, \mu, \rho)$. Note that increasing $\xi$ and $\sigma$ only makes the condition $W(\xi, \sigma)$ stronger. Thus we can assume that $\xi>5D$, where $D$ is provided by Lemma \ref{Omega},  $\sigma \ge \rho $, and
\begin{equation}\label{Eq:mu1}
\mu < \nu/13 -\frac{4\e +2}{13\sigma}.
\end{equation}
We fix any presentation of $G$ of the form \ref{GA0}.

By Lemma \ref{GOL}, for every $(G,\mathcal A)$-geodesic word $W\in \mathcal A^\ast$ representing the identity in $G/\ll \mathcal R\rr $, there exists a van Kampen diagram $\Delta$ over the presentation (\ref{quot}), an $\mathcal R$-face $\Pi$ in $\Delta$, and an $\e$-contiguity diagram $\Gamma $ of $\Pi $ to  $\partial \Delta $ such that $(\Pi, \Gamma, \partial \Delta)\ge 1-13\mu$. We keep the notation introduced on Fig. \ref{fig1} for the boundary sides of $\Gamma$. In this notation, we have
\begin{equation}\label{Eq:lq1}
\begin{array}{rcl}
\ell(q_1)& = &(\Pi, \Gamma, \partial \Delta)\ell (\partial \Pi)>(1-13\mu) \ell (\partial \Pi)\ge (1- \nu +\frac{4\e+2}\sigma) \ell (\partial \Pi)\ge\\&&\\&& (1-\nu)\ell (\partial \Pi) +4\e+2
\end{array}
\end{equation}
(to derive the last two inequalities, we use (\ref{Eq:mu1}) and (W$_4$)).

Recall that $\Gamma $ is a diagram over (\ref{GA0}). Therefore, $\partial \Gamma$ can be mapped to $\G$ by a combinatorial map preserving labels and orientation of edges. We keep the notation $s_1q_1s_2q_2$ for the image of $\partial \Gamma$ in $\G$. Since $q_2$ is labelled by a subword of $W$, it is geodesic in $\G$. By Proposition \ref{scW}, $q_1$ is a $2$-attracting geodesic in $\G$. Therefore, $q_1$ and $q_2^{-1}$ share a subpath $q$ of length al least
$$
\ell(q) \ge \ell(q_2) -\ell(s_1)-\ell(s_2) -2 \ge \ell (q_1) - 2\ell(s_1)-2\ell(s_2) -2 \ge \ell (q_1) -4\e -2.
$$
Combining this inequality with (\ref{Eq:lq1}), we obtain that $W$ and $\lab(\partial \Pi)$ share a common subword $U\equiv\lab(q)$ of length at least $(1-\nu)\| \lab(\partial \Pi)\|$.
\end{proof}

\subsection{Suitable subgroups and quotients}

In this section, we summarize results from \cite{MO,Hull,Osi10} and obtain some new results necessary to prove Theorems \ref{app1} and \ref{app2}. We begin by recalling the notion of a suitable subgroup. It was originally introduced in \cite{Osi10} for relatively hyperbolic groups and then generalized to groups with hyperbolically embedded subgroups in \cite{Hull}. In our paper, we will use this notion in both settings.

We begin with a definition from \cite[Section 5]{Hull}.

\begin{defn}\label{Def:suit1}
Let $G$ be a group, $\mathcal A$ a generating alphabet such that the Cayley graph $\G$ is hyperbolic.
A subgroup $S\le G$ is said to be {\it $\mathcal A$-suitable} (suitable with respect to $\mathcal A$ in the terminology of \cite{Hull}) if the action of $S$ on $\G$ is non-elementary and $S$ does not normalize any non-trivial finite subgroup of $G$.
\end{defn}

Recall that two elements $a$, $b$ of a group are commensurable if $a^m$ is conjugate to $b^n$ for some non-zero integers $m$ and $n$.

\begin{lem}[{\cite[Corollary 5.7]{Hull}}]\label{non-comm}
Let $G$ be a group, $\mathcal A$ a generating alphabet of $G$ such that $\G$ is hyperbolic and the action of $G$ on $\G$ is acylindrical. Suppose that $S$ is an $\mathcal A$-suitable subgroup of $G$. Then, for every $n\in \NN$, there exist pairwise non-commensurable elements $g_1, \ldots, g_n\in S$  such that $g_i$ acts loxodromically on $\G$ and $E(g_i)=\langle g_i\rangle $ for all $i=1, \ldots, n$.
\end{lem}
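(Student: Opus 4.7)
The plan is to produce the $g_i$ via a ping-pong construction inside $S$, followed by two refinements: one that trivializes the finite normal subgroup of each $E(g_i)$, and one that ensures $g_i$ generates $E(g_i)$.

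First, I would construct many non-commensurable loxodromic elements in $S$. By $\mathcal A$-suitability, the restricted action of $S$ on $\G$ is acylindrical and non-elementary, so Theorem \ref{Thm:class} yields a loxodromic element $a \in S$; since $S$ is not virtually cyclic, it also contains a loxodromic $b$ with $E(a) \cap E(b)$ finite (otherwise the limit set of $S$ would reduce to the pair of endpoints of $a$). A standard ping-pong argument between $a$ and $b$ then yields, for every prescribed $n$, pairwise non-commensurable loxodromic elements $h_1, \dots, h_n \in S$; for instance, one can take $h_i = a^N b^{N+i}$ for $N$ sufficiently large and $i=1,\dots,n$, with non-commensurability verified via the translation-length spectrum or via the induced action of the free subgroup $\langle a^N, b^N\rangle$ on its Bass--Serre tree.

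Second, I would replace each $h_i$ by a modified element so that the finite normal subgroup $F_i \lhd E(h_i)$ provided by Lemma \ref{Lem:Eg} becomes trivial. Here the hypothesis that $S$ normalizes no non-trivial finite subgroup is decisive: if $F_i \ne \{1\}$, there exists $s \in S$ with $sF_i s^{-1} \ne F_i$. For $N$ sufficiently large, $h_i' := h_i^N \, s h_i^N s^{-1}$ remains loxodromic (by ping-pong between the independent loxodromic elements $h_i$ and $sh_i s^{-1}$), and by the standard analysis of fixed points at infinity of virtually cyclic subgroups in acylindrical actions (cf.\ \cite{DGO}), the finite part of $E(h_i')$ is contained in $F_i \cap sF_i s^{-1}$, a proper subgroup of $F_i$. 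Iterating at most $|F_i|$ times, one arrives at a loxodromic $g_i \in S$ whose associated $E(g_i)$ has trivial finite radical and is therefore infinite cyclic. Judicious choice of exponents at each iteration keeps the resulting family pairwise non-commensurable, which is possible because commensurability classes within $S$ are ``sparse'' relative to the freedom available in the construction.

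Finally, to upgrade ``$E(g_i)$ infinite cyclic'' to ``$E(g_i) = \langle g_i\rangle$'', I would invoke discreteness of translation lengths in acylindrical actions (Bowditch): if $g_i = t^k$ with $t \in E(g_i)$ and $k \ge 2$, then the translation length of $t$ is at most $\tau(g_i)/2$, and only finitely many conjugacy classes of loxodromic elements lie below any given translation-length threshold. Varying the exponents in the ping-pong step by bounded amounts thus allows one to avoid all possible proper-root relations, producing a primitive $g_i$ for which $E(g_i) = \langle g_i\rangle$. The principal technical obstacle is the second (torsion-killing) step: one must simultaneously eliminate $F_i$, preserve loxodromicity, and maintain non-commensurability within $S$. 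The non-normalization hypothesis on $S$ is precisely what enables the reduction of $F_i$ at each stage, while acylindricity provides the quantitative ping-pong estimates that guarantee the procedure terminates.
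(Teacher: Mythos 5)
The paper does not prove this lemma; it cites it directly from \cite[Corollary 5.7]{Hull}, where the key input is Hull's Lemma~5.6, which produces in one step a loxodromic $g\in S$ satisfying $E(g)=\langle g\rangle\times E_G(S)$ (with $E_G(S)$ the unique maximal finite subgroup of $G$ normalized by $S$, trivial by suitability). Your three-stage plan is a genuinely different route, but each of the two ``refinement'' stages has a real gap. In Stage~2 you assert that for $h_i'=h_i^N\,s h_i^N s^{-1}$ the finite normal subgroup of $E(h_i')$ lies in $F_i\cap sF_is^{-1}$, citing a ``standard analysis of fixed points at infinity.'' That inclusion is not justified and, as stated, is not even plausible: the fixed points of $h_i'$ on $\partial\Gamma$ are only \emph{close} to $h_i^+$ and $s\cdot h_i^-$, not equal to them, so an element stabilizing the pair $\{(h_i')^\pm\}$ need not normalize $E(h_i)$ or $sE(h_i)s^{-1}$ and hence need not lie in $F_i$ or $sF_is^{-1}$. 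To carry this through one would need a quantitative argument (e.g.\ via acylindricity) that rules out \emph{new} torsion appearing in $E(h_i')$, not just the torsion you started with; this is precisely the nontrivial content of Hull's Lemma~5.6, and your sketch does not supply it.

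Stage~3 has a second, independent problem. If $E(g_i)=\langle t\rangle$ is infinite cyclic with $g_i=t^k$ and $k\geq 2$, you cannot simply replace $g_i$ by $t$: the lemma requires $g_i\in S$, but $E(g_i)$ is a subgroup of $G$, and $t$ need not belong to $S$. Your proposed fix---``varying the exponents by bounded amounts'' and invoking a finiteness of conjugacy classes below a translation-length threshold---does not address this, and the claimed finiteness of loxodromic conjugacy classes with small translation length is itself not a standard consequence of acylindricity (Bowditch gives a positive lower bound and discreteness of the set of stable translation lengths, not finiteness of classes). The virtue of Hull's route is precisely that it never encounters either problem: Lemma~5.6 manufactures $g\in S$ with $E(g)=\langle g\rangle$ directly, using the hyperbolically embedded free subgroup inside $S$ provided by non-elementarity, and one then iterates by enlarging the hyperbolically embedded collection with $E(g_1),\dots,E(g_k)$ (cf.\ Theorem~\ref{Thm:Eg}) so that the next element is automatically non-commensurable with the previous ones.
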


For relatively hyperbolic groups, there is a similar notion of suitability defined as follows \cite{AMO,Osi10}.

\begin{defn}\label{Def:suit2}
Let $G$ be a group hyperbolic relative to a collection of subgroups $\Hl$. A subgroup $S\le G$ is \emph{suitable with respect to the peripheral collection} $\Hl$ if it is not virtually cyclic, contains a loxodromic element, and does not normalize any non-trivial finite subgroup of $G$.
\end{defn}

Recall that, in the settings of Definition \ref{Def:suit2}, an element $g\in G$ is loxodromic with respect to the action on $\G$ if and only if $|g|=\infty $ and $g$ is not conjugate to an element of one of the peripheral subgroups. The definition of a suitable subgroup of a relatively hyperbolic group was first formulated in \cite{Osi10} in a slightly different way. Later it was shown to be equivalent to Definition \ref{Def:suit2} in \cite{AMO} (see Lemma 3.3 and Proposition 3.4 there). Furthermore, Definitions \ref{Def:suit1} and \ref{Def:suit2} are equivalent in the following sense.

\begin{lem}\label{Lem:seq}
Let $G$ be a group, hyperbolic relative to a finite collection of subgroups $\Hl$, $X$ a finite relative generating set of $G$ with respect to $\Hl$, $\mathcal A$ the alphabet defined by (\ref{calA}). A subgroup $S\le G$ is suitable with respect to the peripheral collection $\Hl$ if and only if $S$ is $\mathcal A$-suitable.
\end{lem}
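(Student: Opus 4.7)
The plan is to notice that both definitions share the condition that $S$ normalizes no non-trivial finite subgroup of $G$, so the lemma reduces to proving the following equivalence under the hypotheses: the action of $S$ on $\G$ is non-elementary if and only if $S$ is not virtually cyclic and contains an element that is loxodromic with respect to the peripheral collection $\Hl$. The strategy is to set up an acylindrical action of $S$ on $\G$ and apply the trichotomy of Theorem \ref{Thm:class}, together with the characterization of loxodromic elements in relatively hyperbolic groups recalled just after Definition \ref{Def:lox}.

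First I would assemble the preliminaries: by Proposition \ref{rhhe}, the finiteness of $X$ is equivalent to $\Hl \h (G,X)$, and Theorem \ref{Thm:AH}(b) then yields that the action of $G$ on $\G$ is acylindrical. Acylindricity is inherited by subgroups (it is defined in terms of a uniform finiteness condition on elements of the acting group), so $S$ also acts acylindrically on $\G$, and Theorem \ref{Thm:class} applies to $S$: either (a) $S$ has bounded orbits, or (b) $S$ has unbounded orbits and is virtually cyclic, or (c) the action of $S$ on $\G$ is non-elementary. I would also record the equivalence (stated in the paragraph preceding Lemma \ref{Lem:Eg}) that an element $g\in G$ is loxodromic with respect to $G \curvearrowright \G$ if and only if $|g|=\infty$ and $g$ is not conjugate into any $H_i$; this is exactly the notion of loxodromic used in Definition \ref{Def:suit2}.

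For the forward implication, assume $S$ is $\mathcal A$-suitable. Non-elementarity of $S \curvearrowright \G$ rules out cases (a) and (b), so $S$ is not virtually cyclic; moreover, any non-elementary acylindrical action on a hyperbolic space contains a loxodromic element (a standard fact from \cite{Osi16}, which one can also read off from the fact that case (c) forces unbounded orbits and at least two independent limit points on $\partial\G$). Hence $S$ contains a loxodromic element and satisfies Definition \ref{Def:suit2}. For the converse, suppose $S$ is suitable with respect to $\Hl$. Then $S$ contains a loxodromic element $g$, which has unbounded orbits on $\G$ by Definition \ref{Def:lox}, placing $S$ in case (b) or (c). Since $S$ is not virtually cyclic, case (b) is excluded, and the action must be non-elementary.

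I do not expect a genuine obstacle: once the acylindrical trichotomy is in place and the characterization of loxodromic elements in relatively hyperbolic groups is invoked, the argument is pure bookkeeping. The only mildly delicate point is the extraction of a loxodromic element from a non-elementary acylindrical action in the $(\Rightarrow)$ direction, which I would simply cite from \cite{Osi16} rather than re-derive.
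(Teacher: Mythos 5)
Your proof is correct and follows essentially the same route as the paper's: both reduce to the acylindrical trichotomy of Theorem~\ref{Thm:class} applied to $S$ acting on $\G$, after noting that Proposition~\ref{rhhe} and Theorem~\ref{Thm:AH}(b) yield acylindricity. The only cosmetic difference is that you extract a loxodromic element from the non-elementary acylindrical action as a cited fact from \cite{Osi16}, whereas the paper invokes Lemma~\ref{non-comm} directly for the same purpose.
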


\begin{proof}
By Theorem \ref{Thm:AH}, the action of $G$ on $\G$ is acylindrical. If $S$ is $\mathcal A$-suitable, it contains a loxodromic element by Lemma \ref{non-comm} and is not virtually cyclic because its action on $\G $ is non-elementary (see Theorem \ref{Thm:class}). Thus, $S$ is suitable with respect to the peripheral collection $\Hl$. Conversely, if $S$ is suitable with respect to the peripheral collection $\Hl$, then its action on $\G$ must be non-elementary by Theorem \ref{Thm:class}; hence, $S$ is $\mathcal A$-suitable.
\end{proof}

The next result provides a sufficient condition for a subgroup to be suitable.

\begin{lem}\label{Lem:suit1}
Let $G$ be a group and $\mathcal A$ a generating alphabet of $G$ such that $\G$ is hyperbolic and the action of $G$ on $\G$ is acylindrical and non-elementary. If $K(G)=\{ 1\}$, then every non-trivial normal subgroup of $G$ is $\mathcal A$-suitable.
\end{lem}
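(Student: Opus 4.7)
The plan is to verify both conditions of Definition \ref{Def:suit1} for an arbitrary non-trivial normal subgroup $N\lhd G$. The non-elementarity of the $N$-action on $\G$ follows immediately from Lemma \ref{Lem:NormAH} once I note that $N$ must be infinite: any finite normal subgroup of $G$ lies in $K(G)=\{1\}$. In particular, $N$ is itself acylindrically hyperbolic, so its finite radical $K(N)$ is defined.

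For the second condition, I will argue by contradiction, assuming $N$ normalizes some non-trivial finite subgroup $F\le G$. My first step is to show $F\subseteq C_G(N)$. Since $K(N)$ is characteristic in $N$ and $N\lhd G$, the subgroup $K(N)$ is a finite normal subgroup of $G$, hence $K(N)\subseteq K(G)=\{1\}$; as $F\cap N$ is a finite normal subgroup of $N$ (normal because $N$ normalizes $F$), it follows that $F\cap N=\{1\}$. Because $F$ and $N$ normalize each other (the second half by hypothesis, the first because $N\lhd G$), the commutator $[F,N]$ lies in $F\cap N$ and is trivial, so $F\subseteq C_G(N)$.

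The main step is to show $C_G(N)$ is finite. Non-elementarity of the $N$-action supplies two non-commensurable loxodromic elements $n_1,n_2\in N$ (if all loxodromic elements of $N$ were pairwise commensurable, they would share an $N$-invariant pair of fixed points on $\partial\G$, contradicting non-elementarity). For any $c\in C_G(N)$, the abelian subgroup $\langle c,n_i\rangle$ contains the loxodromic element $n_i$; since two commuting loxodromic isometries of a hyperbolic space must share their fixed point pair on $\partial\G$ and hence be commensurable, no abelian subgroup can act non-elementarily acylindrically, so Theorem \ref{Thm:class} forces $\langle c,n_i\rangle$ to be virtually cyclic. By maximality (Lemma \ref{Lem:Eg}), $c\in E(n_i)$, whence $C_G(N)\subseteq E(n_1)\cap E(n_2)$. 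Applying Theorem \ref{Thm:Eg} with $\Hl=\emptyset$ gives $\{E(n_1),E(n_2)\}\h G$, and Proposition \ref{Prop:maln}(b) makes this intersection finite.

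To finish, $C_G(N)$ is normal in $G$ (as the centralizer of a normal subgroup) and finite, so $C_G(N)\subseteq K(G)=\{1\}$, contradicting $\{1\}\ne F\subseteq C_G(N)$. The only subtle point I anticipate is the containment $C_G(n_i)\subseteq E(n_i)$, which is not recorded explicitly in the preceding sections; the route through Theorem \ref{Thm:class} together with the structural observation about commuting loxodromic isometries is designed to supply it without appealing to unstated facts about centralizers of loxodromic elements.
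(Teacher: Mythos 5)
Your argument is correct in its overall structure but takes a genuinely different route from the paper's proof. The paper dispatches the second condition of $\mathcal{A}$-suitability in one line by quoting Lemma 5.5 of Hull, which says that any subgroup acting non-elementarily on $\G$ normalizes a unique maximal finite subgroup of the acylindrically acting ambient group $G$; normality of $S$ in $G$ makes that maximal finite subgroup normal in $G$, hence contained in $K(G)=\{1\}$. Your approach instead reduces the question to showing that $C_G(N)$ is finite and normal, which is more self-contained and exposes directly the role of the subgroups $E(g)$ and of Proposition \ref{Prop:maln}.

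There is, however, a flaw in the parenthetical justification you offer for producing two non-commensurable loxodromic elements in $N$. Commensurability means some nonzero powers are \emph{conjugate}, not equal: if $a^m = g b^n g^{-1}$, then $\mathrm{Fix}(a)=g\cdot\mathrm{Fix}(b)$, so the fixed-point pairs within a commensurability class are only related by the $G$-action and need not coincide. In particular, any two $N$-conjugate loxodromics are automatically commensurable even when their fixed pairs are disjoint. Thus pairwise commensurability of the loxodromics in $N$ does not furnish an $N$-invariant pair of boundary points, and the contradiction you invoke does not follow. The fact you want (a non-elementary acylindrical action of $N$ has non-commensurable loxodromics) is true, but the paper's Lemma \ref{non-comm}, the natural citation, is stated under the hypothesis that the subgroup is $\mathcal{A}$-suitable --- precisely what you are trying to prove --- so it cannot be invoked here without circularity, and a careful ping-pong argument would be needed.

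The gap is avoidable because you never needed two loxodromics. Since the $N$-action on $\G$ is non-elementary and acylindrical, there is a loxodromic $n\in N$. Your centralizer argument then gives $C_G(N)\subseteq E(n)$, so $C_G(N)$ is virtually cyclic. It is also normal in $G$; were it infinite, Lemma \ref{Lem:NormAH} would force its action on $\G$ to be non-elementary, contradicting virtual cyclicity via Theorem \ref{Thm:class}. Hence $C_G(N)$ is finite and normal in $G$, therefore trivial, contradicting $\{1\}\ne F\subseteq C_G(N)$.
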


\begin{proof}
Let $S$ be a non-trivial normal subgroup of $G$. Since $G$ contains no non-trivial finite normal subgroups, $S$ must be infinite. By Lemma \ref{Lem:NormAH}, the action of $S$ on $\G$ is non-elementary. Acylindricity of the action of $G$ on the Cayley graph $\G$ allows us to apply \cite[Lemma 5.5]{Hull}, which claims the existence of a (unique) maximal finite subgroup $E\le G$ normalized by $S$. Since $S$ is normal in $G$, so is $E$. In particular, $E\le K(G)$, which implies $E=\{ 1\}$. Thus, $S$ is $\mathcal A$-suitable.
\end{proof}

\begin{lem}\label{Lem:suit2}
Let $G$ be a group, $\Hl$ a collection of subgroups of $G$, $X$ a subset of $G$ such that $\Hl\h (G,X)$, $\mathcal A$ an alphabet defined by (\ref{calA}), $S$ a subgroup of $G$. Suppose that the action of $G$ on $\G$ is acylindrical and there exist $i_0,j_0\in I$ such that $H_{i_0}\cap H_{j_0}=\{ 1\}$ and the intersections $S\cap H_{i_0}$, $S\cap H_{j_0}$ are infinite. Then $S$ is $\mathcal A$-suitable.
\end{lem}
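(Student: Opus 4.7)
The plan is to verify the two defining conditions of $\mathcal A$-suitability in Definition \ref{Def:suit1}: that the action of $S$ on $\G$ is non-elementary, and that $S$ does not normalize any non-trivial finite subgroup of $G$. Note first that $i_0\ne j_0$, since $H_{i_0}$ is infinite (as $S\cap H_{i_0}$ is) while $H_{i_0}\cap H_{j_0}=\{1\}$.

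For non-elementarity, the restriction of the $G$-action on $\G$ to $S$ is again acylindrical, so Theorem \ref{Thm:class} reduces the task to ruling out bounded orbits and virtual cyclicity of $S$. Virtual cyclicity is quick: any two infinite subgroups of a virtually cyclic group share a non-trivial common power of a generator of the finite-index cyclic subgroup, which applied to $S\cap H_{i_0}$ and $S\cap H_{j_0}$ would contradict $H_{i_0}\cap H_{j_0}=\{1\}$. For unboundedness of orbits I will invoke Proposition \ref{scW}. Let $D$ be the constant from Lemma \ref{Omega}. By part (b) of Definition \ref{hedefn}, only finitely many $h\in H_{i_0}$ satisfy $\widehat{\d}_{H_{i_0}}(1,h)\le 5D$, so infiniteness of $S\cap H_{i_0}$ produces elements $a_1,a_2,\ldots\in S\cap H_{i_0}$ with $\widehat{\d}_{H_{i_0}}(1,a_k)>5D$; I pick $b_1,b_2,\ldots\in S\cap H_{j_0}$ analogously. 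Proposition \ref{scW} then guarantees that the path in $\G$ labelled by $a_1b_1\cdots a_nb_n$ is a geodesic, so $w_n:=a_1b_1\cdots a_nb_n\in S$ satisfies $|w_n|_{\mathcal A}=2n$ and the $S$-orbit of $1$ in $\G$ is unbounded.

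For the second condition, suppose for contradiction that $S$ normalizes a non-trivial finite subgroup $F\le G$. The conjugation action gives a homomorphism $S\to\operatorname{Aut}(F)$ with kernel $K:=C_S(F)$ of finite index in $S$, so both $K\cap H_{i_0}$ and $K\cap H_{j_0}$ remain infinite. For any $f\in F\setminus\{1\}$, the inclusion $K\subseteq C_G(f)$ forces $f^K=\{f\}$. On the other hand, the subcollection $\{H_{i_0},H_{j_0}\}$ inherits hyperbolic embeddedness from $\Hl$ by absorbing the remaining $H_i$ into the relative generating set (which leaves $\G$ unchanged), so Corollary \ref{Cor:ICC} applies with $S$ replaced by $K$ and yields $|f^K|=\infty$, a contradiction.

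The main obstacle is establishing unboundedness of the $S$-orbit: infiniteness of $S\cap H_{i_0}$ alone is insufficient because every element of $H_{i_0}$ lies at $\mathcal A$-distance at most $1$ from $1$ in $\G$. The attracting-geodesic machinery of Proposition \ref{scW} is essential here, and this is precisely where the hypotheses $H_{i_0}\cap H_{j_0}=\{1\}$ and $\Hl\h(G,X)$ combine to force the alternating words $a_1b_1\cdots a_nb_n$ to label geodesics.
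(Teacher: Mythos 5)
Your proof is correct and follows essentially the same route as the paper: alternating words of elements of $S\cap H_{i_0}$ and $S\cap H_{j_0}$ with $\widehat\d>5D$ yield geodesics (the paper invokes Lemma~\ref{Wgeod} for $(ab)^n$ where you invoke Proposition~\ref{scW} for $a_1b_1\cdots a_nb_n$, both equally valid), virtual cyclicity is ruled out from $H_{i_0}\cap H_{j_0}=\{1\}$, and the finite-normal-subgroup condition is deduced from Corollary~\ref{Cor:ICC}. Your last step is slightly more elaborate than necessary (one can simply note that if $S$ normalized a non-trivial finite $F$ then $f^S\subseteq F$ would be finite for $f\in F\setminus\{1\}$, directly contradicting Corollary~\ref{Cor:ICC} applied to $S$), but it is nonetheless sound, and your explicit observation that $\{H_{i_0},H_{j_0}\}\h G$ follows by absorbing the other $H_i$ into the relative generating set is a point the paper leaves implicit.
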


\begin{proof}
Let $D$ be the constant provided by Lemma \ref{Omega} applied to $\Hl\h (G,Y)$ and let $\widehat\d_{H_i}$ be the corresponding generalized metrics on groups $H_i$. Since the intersections $S\cap H_{i_0}$ and $S\cap H_{j_0}$ are infinite, there exist $a\in S\cap H_{i_0}$, $b\in S\cap H_{j_0}$ such that $\widehat \d _{H_{i_0}}(1, a)>5D$ and $\widehat \d _{H_{j_0}}(1, b)>5D$. By Lemma \ref{Wgeod}, every path in $\G$ labeled by $(ab)^n$ for some $n\in \NN$ is geodesic. Therefore, the action of $S$ on $\G$ has unbounded orbits. Note also that $S$ cannot be virtually cyclic; for otherwise, any two infinite subgroups of $S$ would have non-trivial intersection, which contradicts the assumption $H_{i_0}\cap H_{j_0}=\{ 1\}$. Therefore, the action of $S$ on $\Gamma (G, \mathcal A)$ is non-elementary by Theorem \ref{Thm:class}. Finally, we note that $S$ does not normalize any non-trivial finite subgroup of $G$ by Corollary \ref{Cor:ICC}.
\end{proof}

To make our paper as self-contained as possible, we summarize some results from \cite{Hull} and \cite{MO} used below. The first one is proved in \cite[Proposition 3.3]{MO}. We simplify it and change the notation a bit to better fit the other results discussed below.

\begin{lem} \label{SCQ} Let $G$ be a  group and let $\Hl$, $\{ A,B\}$ be two collections of subgroups of $G$. Suppose that $A\cap B=\{ 1\}$ and $\Hl\cup \{A,B\} \h(G,X)$ for some $X\subseteq G$. Then there exists $n \in \mathbb N$ and finite subsets $\mathcal F_A\subseteq A$, $\mathcal F_B\subseteq B$  such that the following holds.

Suppose that $\mathcal W=\{ W_m\}_{m\in \NN}$ is an arbitrary set of words in the alphabet $X\sqcup A \sqcup B$ of the form
\begin{equation}\label{W}
W_m\equiv x_m a_{m1} b_{m1}\ldots a_{mn}b_{mn}
\end{equation}
satisfying the following conditions for every $m \in \NN$:
\begin{enumerate}
\item[(a)] $x_m\in X$;
\item[(b)] $a_{m\ell}\in A\setminus \mathcal F_A$, $b_{m\ell}\in B\setminus \mathcal F_B$ for all $1\le \ell\le n$;
\item[(c)] if a letter $c\in A\sqcup B$ occurs in $W_m$ for some $m\in \NN$, then it occurs only in $W_m$ and only once; in addition, $c^{-1}$ does not occur in any word from $\mathcal W$.
\end{enumerate}
Then the restriction of $\gamma\colon G\rightarrow G/\ll \mathcal W\rr$ to each $H_i$ is injective and $\{ \gamma(H_i)\}_{i\in I}\h G/\ll \mathcal W\rr$.
\end{lem}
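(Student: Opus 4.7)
The plan is to choose $n$, $\mathcal F_A$, $\mathcal F_B$ so that the set $\mathcal W$ satisfies a variant of the $W(\xi,\sigma)$ condition (Definition \ref{Defn:WMlr}), apply Lemma \ref{W->C}(b) to upgrade this to $C_1(\e,\mu,\rho)$, and then use the Greendlinger-type statement Proposition \ref{Prop:GL} to verify both injectivity of $\gamma|_{H_i}$ and the hyperbolic embedding. To set up the parameters, I would fix $\nu\in(0,1/2)$ and let $\xi,\sigma>0$ be the constants output by Proposition \ref{Prop:GL}. Since $A,B$ are hyperbolically embedded, the sets $\{a\in A: \widehat{\d}_A(1,a)\le \xi\}$ and $\{b\in B: \widehat{\d}_B(1,b)\le \xi\}$ are finite by part (b) of Definition \ref{hedefn}; I would take $\mathcal F_A,\mathcal F_B$ to be these sets, enlarged if necessary to accommodate the side condition $x_m\notin H_{j_2}H_{j_1}$ required by Lemma \ref{W->C}(b). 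Finally, pick $n$ with $2n\ge \sigma$.

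Given any $\mathcal W$ satisfying (a)--(c), write $W_m\equiv x_m V_m$ with $V_m\equiv a_{m1}b_{m1}\cdots a_{mn}b_{mn}\in \mathcal H^\ast$ and check (W$_1$)--(W$_4$) for $V_m$: the first holds since consecutive letters belong to $A$ and $B$ with $A\cap B=\{1\}$; the second by the choice of $\mathcal F_A,\mathcal F_B$; the third by hypothesis (c); and the fourth by $\|V_m\|=2n\ge \sigma$. Lemma \ref{W->C}(b) then implies that the symmetrization of $\{x_m V_m\}_{m\in \NN}=\mathcal W$ satisfies $C_1(\e,\mu,\rho)$ for parameters compatible with Proposition \ref{Prop:GL}.

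For injectivity of $\gamma|_{H_i}$, suppose $h\in H_i\setminus\{1\}$ represents the identity in $\overline G=G/\ll\mathcal W\rr$. The single letter $h\in \mathcal A^\ast$ is $(G,\mathcal A)$-geodesic and nontrivial in $G$, so Proposition \ref{Prop:GL} produces some $R$ in the symmetrization of $\mathcal W$ such that a cyclic shift of $R^{\pm 1}$ and $h$ share a subword of length at least $(1-\nu)\sigma>1$, contradicting $\|h\|=1$. For the hyperbolic embedding $\{\gamma(H_i)\}_{i\in I}\h \overline G$, the two required properties---hyperbolicity of the quotient Cayley graph and finiteness of $\widehat{\d}_{\gamma(H_i)}$-balls---I would extract from $C_1(\e,\mu,\rho)$ via the small cancellation machinery of \cite{Hull,Osi10}: hyperbolicity by transferring the geodesic-polygon comparison from $\G$ to its quotient using Proposition \ref{Prop:GL}, and the finite-ball property by lifting any short path in the quotient Cayley graph avoiding $\Gamma(\gamma(H_i),\gamma(H_i))$ to a van Kampen diagram over (\ref{quot}) and analyzing its contiguity subdiagrams.

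The main technical obstacle will be this last step. Injectivity of $\gamma|_{H_i}$ is an immediate consequence of Proposition \ref{Prop:GL}, but establishing finiteness of $\widehat{\d}_{\gamma(H_i)}$-balls requires a genuine van Kampen diagram analysis---controlling the contiguity subdiagrams of $\mathcal W$-faces to the lifted path segments in $\G$---analogous in spirit to the argument developed for Lemma \ref{Prop:HO} in the preceding section.
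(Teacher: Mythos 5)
The paper does not actually prove Lemma \ref{SCQ}; it is imported verbatim (up to notational simplification) from \cite[Proposition 3.3]{MO}, and the surrounding text says so explicitly. Your proposal is therefore a genuinely different route: you try to rederive the statement from the $W(\xi,\sigma)\Rightarrow C_1(\e,\mu,\rho)$ machinery built in Sections 3.3--3.4 of the present paper. That approach has two real gaps.

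First, Lemma \ref{W->C}(b) requires that the letter $x_m\in X$ represent an element that is \emph{not} in $BA$. Nothing in the hypotheses of Lemma \ref{SCQ} guarantees this: $x_m$ is an arbitrary letter of $X$, and $X$ may well contain (or even be a subset of) $BA$; in particular, $1\in BA$ always holds for symmetric generating sets. Your proposed fix --- enlarging $\mathcal F_A$ and $\mathcal F_B$ --- cannot help, because those sets only constrain the letters $a_{m\ell}\in A$ and $b_{m\ell}\in B$, not $x_m\in X$. This is not a cosmetic issue: the paper's strengthened $C(\e,\mu,\rho)$/$C_1(\e,\mu,\rho)$ conditions insist that the relators be $(G,\mathcal A)$-geodesic, whereas \cite{MO} (following \cite{Hull,Osi10}) works with the weaker $C(\e,\mu,\lambda,c,\rho)$ condition allowing $(\lambda,c)$-quasi-geodesic relators, which is precisely what tolerates an arbitrary $x_m$. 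So you cannot in general push the relators $W_m$ into the present paper's small cancellation framework.

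Second, even granting $C_1(\e,\mu,\rho)$, the key conclusion $\{\gamma(H_i)\}_{i\in I}\h G/\ll\mathcal W\rr$ is not established. You identify this as the main obstacle but leave it at the level of a plan (``transferring the geodesic-polygon comparison,'' ``lifting any short path to a van Kampen diagram and analyzing its contiguity subdiagrams''). The difficulty is essential: Lemma \ref{Lem:Hull}(b) gives $\h$ only for \emph{finite} $\mathcal R$, and the whole reason Lemma \ref{SCQ} is needed here (see the proof of Theorem \ref{glue}) is to handle infinite $\mathcal W$ with uniform control. This uniformity comes from the rigid common shape \eqref{W} of all the relators --- a feature your sketch does not exploit --- and extracting it is exactly the content of the \cite{MO} argument, not something Proposition \ref{Prop:GL} delivers on its own. (A smaller side issue: Proposition \ref{Prop:GL} as stated requires $\mathcal R\subseteq\mathcal H^\ast$, while $\mathcal W\not\subseteq\mathcal H^\ast$ because of the $x_m$'s; your injectivity step would need a modified version of that proposition.)

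In short, your injectivity argument is morally on the right track if $x_m\notin BA$ could be arranged, but the reduction to the paper's $W(\xi,\sigma)$ framework breaks down in general, and the hyperbolic embedding claim is left unproved. The honest proof of this lemma is the one in \cite{MO}, and that is what the paper uses.
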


The next result combines Lemmas 4.4 and Lemma 4.9 from \cite{Hull} (note that the result about relative hyperbolicity in part (b) follows form $\{\gamma(H_i)\}_{i\in I}\h (\overline G, \gamma(X))$ and Proposition \ref{rhhe}; it was first proved in  \cite[Lemma 5.1]{Osi10}). As everywhere in this paper, we reproduce the simplified versions of the above-mentioned results for the $C(\e, \mu, \rho)$ small cancellation condition instead of the more general condition $C(\e, \mu, \lambda, c, \rho)$ considered in \cite{Hull,Osi10}. Note that, unlike in the previous proposition, the set $\mathcal R$ is assumed to be finite here.

\begin{lem}\label{Lem:Hull}
Let $G$ be a group, $\Hl$ a collection of subgroups of $G$ such that $\Hl\h (G,X)$ for some $X\subseteq G$, $\mathcal A$ an alphabet defined by (\ref{calA}). There exist $\e\ge 0$ and $\mu, \rho>0$ such that, for any finite symmetric set of words $\mathcal R$ in the alphabet $\mathcal A$ satisfying $C(\e, \mu , \rho )$, the following hold.
\begin{enumerate}
\item[(a)] The restriction of the natural homomorphism $\gamma\colon G\to \overline G=G/\ll \mathcal R\rr$ to the set
\begin{equation}\label{BN}
B_N= \{ g\in G\mid |g|_\mathcal A\le N\}
\end{equation}
is injective. In particular, the restriction of $\gamma$ to $\bigcup\limits_{i\in I} H_i$ is injective.
\item[(b)] $\{\gamma(H_i)\}_{i\in I}\h (\overline G, \gamma(X))$. In particular, if $G$ is hyperbolic relative to $\Hl$ and $|X|<\infty$, then $\overline{G}$ is hyperbolic relative to $\{\gamma(H_i)\}_{i\in I}$.
\item[(c)] Every finite order element of $\overline{G}$ is the image of a finite order element of $G$.
\end{enumerate}
\end{lem}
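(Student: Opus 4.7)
The strategy is to use Lemma \ref{GOL} (Olshanskii's Greendlinger-type lemma) as the main driving tool, and to tune the small cancellation parameters $\e, \mu, \rho$ so that all three conclusions follow. I would begin by fixing $\mu \in (0, 1/16]$ and letting $\e, \rho_0$ be the constants produced by Lemma \ref{GOL}, and then further enlarging $\rho$ as needed (in particular, the parameter $N$ in part (a) should be viewed as a given input, so $\rho$ may depend on it).

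For part (a), I would argue by contradiction. Suppose $g \in B_N$ satisfies $g \neq 1$ in $G$ but $\gamma(g) = 1$ in $\overline G$. Let $W$ be a $(G, \mathcal A)$-geodesic word representing $g$, so $\|W\| \le N$. Apply Lemma \ref{GOL} with $W_1 \equiv W$ and $W_2, W_3, W_4$ taken to be empty. This produces a van Kampen diagram $\Delta$ over (\ref{quot}), an $\mathcal R$-face $\Pi$, and $\e$-contiguity subdiagrams $\Gamma_1, \ldots, \Gamma_4$ with contiguity degrees summing to at least $1 - 13\mu$. Since $p_2, p_3, p_4$ are trivial, each $(\Pi, \Gamma_i, p_i)$ for $i = 2, 3, 4$ is bounded by $2\e/\rho$, while a triangle-inequality argument (using $(G,\mathcal A)$-geodesicity of $\lab(\partial \Pi)$) bounds the $\Pi$-side of $\Gamma_1$ by $N + 2\e$, giving $(\Pi, \Gamma_1, p_1) \le (N + 2\e)/\rho$. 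Choosing $\rho$ large enough that $(N + 8\e)/\rho < 1 - 13\mu$ yields the desired contradiction. The statement about $\bigcup_i H_i$ follows from the case $N = 1$, since each letter of $\mathcal H$ has $\mathcal A$-length at most $1$.

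For part (b), two things must be checked. Hyperbolicity of $\Gamma(\overline G, \gamma(\mathcal A))$ is transferred from that of $\G$ via a diagram-comparison argument: any $(\overline G, \gamma(\mathcal A))$-geodesic word labels a path that tracks a $(G, \mathcal A)$-geodesic up to controlled detours around $\mathcal R$-face boundaries, and the Greendlinger lemma would turn any thick triangle in the quotient into a diagram containing a large $\mathcal R$-face, which is ruled out by the small-cancellation hypothesis. The local finiteness of $\widehat{\d}_{\gamma(H_i)}$-balls is proved by lifting a realizing path from $\Gamma(\overline G, \gamma(\mathcal A))$ to $\G$: a short path avoiding $\Gamma(\gamma(H_i),\gamma(H_i))$ in the quotient can be modified (exploiting that the $C(\e,\mu,\rho)$-condition prevents relators from providing arbitrary shortcuts) into a path of bounded length in $\G$ avoiding $\Gamma(H_i, H_i)$. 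Combined with local finiteness of $\widehat{\d}_{H_i}$ and the injectivity of $\gamma|_{H_i}$ (part (a)), this gives the required finiteness. The relative-hyperbolicity clause then follows from Proposition \ref{rhhe}.

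For part (c), the plan is to show that if $\gamma(g)$ has finite order in $\overline G$ but $g$ has infinite order in $G$, then $\gamma(g)$ is nonetheless the image of some finite-order $h \in G$. Take $g$ of minimal $\mathcal A$-length in its $\overline G$-conjugacy class and let $n$ be its order. Consider a van Kampen diagram $\Delta$ over (\ref{quot}) with boundary labelled by $W^n$ for $W$ a cyclically $(G, \mathcal A)$-geodesic word representing $g$, chosen to minimize the number of $\mathcal R$-faces. A cyclic/quadrilateral application of Lemma \ref{GOL} yields an $\mathcal R$-face $\Pi$ whose boundary is almost entirely contiguous to $\partial \Delta$. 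Since $\lab(\partial \Pi)$ represents $1$ in $G$, one reads off an identity that, combined with the cyclic $W^n$-structure of $\partial \Delta$ and the minimality of $|g|_{\mathcal A}$, forces $\gamma(g)$ to be conjugate in $\overline G$ to $\gamma(h)$ for some $h$ of finite order in $G$ (with order dividing a quantity determined by $\partial \Pi$). The main technical obstacle is adapting Lemma \ref{GOL}, which is stated for a quadrilateral boundary decomposition, to the cyclic setting needed here; this I would handle by decomposing $W^n$ into four $(G,\mathcal A)$-geodesic pieces and combining the contiguity data with the minimality of $|g|_{\mathcal A}$ to force the contiguity subdiagrams to wrap predictably around $\partial \Delta$.
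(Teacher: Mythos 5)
The paper does not actually prove this lemma: it is presented as a simplified restatement of results already established in the literature -- specifically Lemmas 4.4 and 4.9 of Hull's paper \cite{Hull} (with the relative-hyperbolicity clause in part (b) obtained from Proposition \ref{rhhe}), and the authors explicitly note that the proof is essentially the same as that of \cite[Lemma 5.1]{Osi10}. So there is no internal proof to compare against, and you are attempting something the paper deliberately delegated to its references.

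As for the substance of your sketch, part (a) is essentially the right argument and matches what the cited proofs do: apply Lemma \ref{GOL} to a short $(G,\mathcal A)$-geodesic word, bound the contiguity of the $\mathcal R$-face to a short boundary by roughly $(N + 8\e)/\rho$, and choose $\rho$ large to contradict $1-13\mu$. Two small corrections: injectivity on $B_N$ requires applying this to $gh^{-1} \in B_{2N}$ rather than a single $g$, and you should flag explicitly (as the paper implicitly does when it lets $\sigma$, $\rho$ grow) that the constants are allowed to depend on $N$.

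Parts (b) and (c) are too thin to count as proofs and rest on nontrivial machinery that you only gesture at. For (b), showing hyperbolicity of $\Gamma(\overline G, \gamma(\mathcal A))$ and local finiteness of $\widehat\d_{\gamma(H_i)}$-balls is the technical core of \cite[Lemma 4.4]{Hull} / \cite[Lemma 5.1]{Osi10}; a phrase like ``a short path ... can be modified ... into a path of bounded length in $\G$'' hides a delicate lifting-and-surgery argument using the small cancellation condition, and the claimed conclusion is quantitative. For (c), there is an outright error: you assert that ``$\lab(\partial\Pi)$ represents $1$ in $G$,'' but $\partial\Pi$ is bounded by a word from $\mathcal R$, which by construction is a \emph{new} relator that is typically nontrivial in $G$ (otherwise $\overline G=G$ and there is nothing to prove). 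The actual argument (as in Hull Lemma 4.9 / Olshanskii Lemma 6.7) runs differently: one works with a reduced diagram whose boundary label is $W^n$ (or with an annular diagram), uses the Greendlinger lemma together with the minimality of $|g|_{\mathcal A}$ in the conjugacy class to show that the diagram can contain \emph{no} $\mathcal R$-faces at all, and then concludes $W^n=1$ already holds in $G$, so $g$ has finite order in $G$. Your sketch instead tries to ``read off an identity'' from $\partial\Pi$, which does not lead anywhere once the error above is fixed. So (c) as written would not work.
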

It is worth noting that in the statement of \cite[Lemma 4.4]{Hull}, Hull only claims that $\{\gamma(H_i)\}_{i\in I}\h \overline G$ in the settings of Lemma \ref{Lem:Hull}. However, the proof (which is essentially the same as the one of \cite[Lemma 5.1]{Osi10}) actually shows that $\{\gamma(H_i)\}_{i\in I}\h (\overline G, \gamma(X))$.

We are now ready to state the main result of this section. It generalizes and strengthens analogous results of \cite{Hull,Ols93,Osi10}. Compared to \cite{Hull}, the main improvement is that we allow the set $\mathcal F$ to be infinite. This generalization is essential for some applications, e.g., for the proof of Theorem \ref{app2}. The result about centralizers (part (d)) is also new in these settings. We provide a complete proof modulo results of \cite{Hull,MO} discussed above to make sure that these new additions are consistent with previously known parts.

\begin{thm}\label{glue}
Let $G$ be a group, $\Hl$ a collection of subgroups of $G$ such that $\Hl\h (G,X)$ for some $X\subseteq G$, $\mathcal A$ an alphabet defined by (\ref{calA}). Suppose that the action of $G$ on $\G$ is acylindrical. For any $\mathcal A$-suitable subgroup $S\le G$, any countable subset $\mathcal F \subseteq X$, and any $N\in \NN$, there exists a subset $\{ s_f\mid f\in \mathcal F\}\subseteq S$ such that the quotient group
\begin{equation}\label{Eq:Gbar}
\overline{G}=G/\ll \mathcal R \rr,\;\; {\rm where }\; \mathcal R= \{fs_f \mid f\in \mathcal F\},
\end{equation}
satisfies the following conditions.
\begin{enumerate}
\item[(a)]  The restriction of the natural homomorphism $\gamma\colon G\to \overline{G}$ to the set $B_N$ defined by (\ref{BN}) is injective. In particular, $\gamma$ is injective on $\bigcup \limits_{i\in I} H_i$.
\item[(b)] $\{\gamma(H_i)\}_{i\in I}\h \overline G$.
\item[(c)] Every finite order element of $\overline{G}$ is the image of a finite order element of $G$.
\item[(d)] For every $g\in B_N$, we have $C_{\overline{G}}(\gamma(g))=\gamma(C_G(g))$.
\item[(e)] If, in addition, the sets $I$, $\mathcal F$, and $X$ are finite, then $\overline G$ is hyperbolic relative to $\{\gamma(H_i)\}_{i\in I}$.
\end{enumerate}
\end{thm}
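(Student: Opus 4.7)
The plan is to construct the elements $\{s_f\}_{f\in \mathcal F}\subseteq S$ so that the resulting relators $\mathcal R=\{fs_f\mid f\in\mathcal F\}$ simultaneously satisfy the hypotheses of Lemma \ref{SCQ} (which yields hyperbolic embeddedness in the quotient) and, after symmetrization, the $W(\xi,\sigma)$ small cancellation condition of Definition \ref{Defn:WMlr} for parameters large enough that Lemma \ref{W->C}(b), Lemma \ref{Lem:Hull}, and Lemma \ref{Prop:HO} all apply.

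First I would enlarge the hyperbolically embedded collection. Because $S$ is $\mathcal A$-suitable and $G$ acts acylindrically on $\G$, Lemma \ref{non-comm} produces pairwise non-commensurable loxodromic elements $g_1,g_2\in S$ with $E(g_1)=\langle g_1\rangle$ and $E(g_2)=\langle g_2\rangle$. Theorem \ref{Thm:Eg} then yields $\Hl\cup \{\langle g_1\rangle,\langle g_2\rangle\}\h(G,X)$, so the Cayley graph $\Gamma(G,\mathcal A')$ with $\mathcal A'=X\sqcup \bigsqcup_{i\in I} H_i\sqcup \langle g_1\rangle\sqcup \langle g_2\rangle$ is hyperbolic, and non-commensurability yields $\langle g_1\rangle\cap \langle g_2\rangle=\{1\}$, which supplies condition (W$_1$) for words alternating letters from these cyclic subgroups.

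Next I would fix the constants and construct the relators. Choose $\mu=1/100$ and then $\e,\rho$ large enough for Lemma \ref{Lem:Hull} and Lemma \ref{Prop:HO} to apply with the given $N$; from Lemma \ref{W->C}(b) obtain corresponding $\xi,\sigma>0$, and set $n=\lceil \sigma/2\rceil$. Enumerate $\mathcal F=\{f_m\}_{m\in \NN}$, and pick positive integers $a_{m,\ell},b_{m,\ell}$ (for $m\in\NN$, $1\le\ell\le n$), all pairwise distinct across the entire double index, with every $a_{m,\ell},b_{m,\ell}$ so large that $\widehat\d_{\langle g_i\rangle}(1,g_i^{a_{m,\ell}})\ge \xi$ and $\widehat\d_{\langle g_i\rangle}(1,g_i^{b_{m,\ell}})\ge \xi$ (possible since $\{\langle g_i\rangle\}\h G$ forces $\widehat\d_{\langle g_i\rangle}$ to have finite balls). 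Set
\[
s_{f_m}:=g_1^{a_{m,1}}g_2^{b_{m,1}}g_1^{a_{m,2}}g_2^{b_{m,2}}\cdots g_1^{a_{m,n}}g_2^{b_{m,n}}\in S
\]
and $W_m:=f_m s_{f_m}$. Before choosing the exponents I would, if necessary, replace $(g_1,g_2)$ within the infinite supply of non-commensurable loxodromic pairs in $S$ provided by Lemma \ref{non-comm} so that $f_m\notin\langle g_2\rangle\langle g_1\rangle$ for every $m$, as required by Lemma \ref{W->C}(b).

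Finally I would assemble the conclusions. By construction $\{W_m\}$ meets condition $W(\xi,\sigma)$ (cf.\ Lemma \ref{Lem:sc}) and the hypotheses of Lemma \ref{SCQ} (positive and pairwise distinct exponents ensure each letter of $\mathcal H$ appears at most once and no inverse letters occur, with the finite exclusion sets $\mathcal F_A,\mathcal F_B$ absorbing the few smallest powers). Hence the symmetrization of $\mathcal R$ satisfies $C_1(\e,\mu,\rho)$ via Lemma \ref{W->C}(b); Lemma \ref{Lem:Hull}(a)(c) gives parts (a) and (c); Lemma \ref{SCQ} (or equivalently Lemma \ref{Lem:Hull}(b)) gives part (b); and Lemma \ref{Prop:HO} gives part (d) for the prescribed $N$. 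For part (e), when $I,\mathcal F,X$ are finite, Proposition \ref{rhhe} makes $G$ hyperbolic relative to $\Hl\cup\{\langle g_1\rangle,\langle g_2\rangle\}$, Lemma \ref{Lem:Hull}(b) transfers this to $\overline G$, and Corollary \ref{Cor:rhh} drops the cyclic peripherals. The main obstacle is the coordinated choice of parameters: the constants $\xi,\sigma,\mu,\e,\rho$ and the exponent sizes dictated by Lemmas \ref{SCQ}, \ref{Lem:Hull}, \ref{W->C}, and \ref{Prop:HO} must be threaded in the correct order so that a single set of exponents works for every lemma simultaneously. A secondary technical point, which is precisely where one goes beyond \cite{Hull}, is selecting $(g_1,g_2)$ to avoid the countably many double cosets $\langle g_2\rangle\langle g_1\rangle$ that intersect the (possibly infinite) set $\mathcal F$.
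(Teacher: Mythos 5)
Your argument is essentially the same as the paper's: enlarge the hyperbolically embedded collection by $\langle a\rangle,\langle b\rangle$ for a non\nobreakdash-commensurable loxodromic pair in $S$ via Lemma~\ref{non-comm} and Theorem~\ref{Thm:Eg}; set the constants in the order $(\mu,\e,\rho)\rightsquigarrow(\xi,\sigma)$ so that Lemmas~\ref{Prop:HO}, \ref{Lem:Hull}, \ref{SCQ}, and \ref{W->C} all fire simultaneously; build the relators as prefixes $f_m$ followed by long alternating positive powers of $a$ and $b$ with pairwise distinct exponents, large in the $\dh$-metrics; and read off (a)--(e) from Lemma~\ref{Lem:Hull}(a)(c), Lemma~\ref{SCQ}, Lemma~\ref{Prop:HO}, and (for (e)) Proposition~\ref{rhhe} plus Corollary~\ref{Cor:rhh}. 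Your words $s_{f_m}$ are exactly of the shape of the $P_i$ from (\ref{Eq:Si}); the paper packages this choice through Lemma~\ref{Lem:sc} rather than spelling the exponents out, but this is cosmetic.

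One point worth comment: you correctly flag that applying Lemma~\ref{W->C}(b) requires $f_m\notin H_{j_2}H_{j_1}=\langle b\rangle\langle a\rangle$, a hypothesis the paper does not explicitly verify. Your suggested remedy --- replacing the pair $(g_1,g_2)$ to dodge countably many double cosets --- is plausible but not actually carried out (Lemma~\ref{non-comm} hands you arbitrarily large finite families rather than an explicit ``infinite supply'', and one would still need to argue that only a manageable set of pairs is bad). A cleaner way to close this gap is simply to observe that for any $f\in\mathcal F\cap\langle b\rangle\langle a\rangle\subseteq S$ one may take $s_f:=f^{-1}\in S$, so that $fs_f=1$ contributes nothing to $\ll\mathcal R\rr$; thus one may assume $\mathcal F\cap\langle b\rangle\langle a\rangle=\emptyset$ from the outset and apply Lemma~\ref{W->C}(b) verbatim. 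Modulo that small repair (which the paper should arguably also make explicit), your proposal is correct and follows the paper's route.
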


\begin{rem}
Note that, in the notation of Theorem \ref{glue}, we have $\gamma(\mathcal F)\subseteq \gamma(S)$ since $s_f\in S$ for all $f\in \mathcal F$.
\end{rem}

\begin{proof}
By Lemma \ref{non-comm}, there exist non-commensurable elements $a, b\in S$ acting loxodromically on $\G$ such that $E(a)=\langle a\rangle$, $E(b)=\langle b\rangle$. Note that $E(a)\cap E(b) =\{ 1\}$ by Proposition \ref{Prop:maln}. Furthermore, we have $\{ \langle a\rangle, \langle b\rangle\} \cup \Hl \h (G,X)$ by Theorem \ref{Thm:Eg}. We use the notation $A=\langle a\rangle $, and $B=\langle b\rangle$, which is consistent with Lemma \ref{Lem:Hull}.

Let $\mathcal F=\{ f_1, f_2, \ldots\}$. Since $C_1(\e, \mu, \rho)$ implies $C(\e, \mu, \rho)$ and both conditions become stronger as $\e$ and $\rho $ increase and $\mu$ decreases, we can choose sufficiently large $\e$, $\rho$ and sufficiently large $\mu$ such that the conclusions of Lemmas \ref{Prop:HO} and \ref{Lem:Hull} simultaneously hold. Further, let $\xi$ and $\sigma $ be the constants provided by Lemma \ref{W->C} for the chosen values of $\e$, $\mu$, and $\rho$.
By Lemma \ref{Lem:sc}, There exists $\ell\in \NN$ such that the set of words $\mathcal P(\ell)=\{ P_i\mid i\in \NN\}$ defined by (\ref{Eq:Si}) satisfies $W(\xi,\sigma)$. Combining this with  Lemma \ref{W->C} (b), we conclude that the symmetrization of the set $\{ f_1P_1, f_2P_2, \ldots\}$ satisfies the $C_1(\e, \mu, \rho)$ small cancellation condition. In addition, increasing $\ell$ if necessary, we can ensure that all letters in the words $P_i$ come from the sets $A\setminus \mathcal F_A$ and $B\setminus \mathcal F_B$, where $\mathcal F_A$ and $\mathcal F_A$ are provided by Lemma  \ref{SCQ} (we use the condition $\{ A, B\}\h G$ and part (b) of Definition \ref{hedefn} here).

Since $a,b\in S$, each $P_i$ represents and element of $S$ in $G$. Thus, it suffices to show that claims (a)--(e) hold for the group
$$
\overline{G}=G/\ll f_1P_1, f_2P_2, \ldots \rr.
$$
Thanks to the choice of the parameters explained in the previous paragraph, we can apply Lemmas~\ref{Prop:HO}, \ref{SCQ}, and \ref{Lem:Hull} to this quotient. Parts (a) and (c) of Lemma \ref{Lem:Hull} give us parts (a) and (c) of the theorem, Lemma \ref{SCQ} gives (b), and Lemma \ref{Prop:HO} gives (d).

Since $\mathcal F$ is finite, so is $\mathcal R$. Therefore, $\overline G$ is hyperbolic relative to $\Hl\cup \{ A,B\}$ by part (b) of Lemma \ref{Lem:Hull}. Since $A$ and $B$ are cyclic, $\overline G$ is hyperbolic relative to $\Hl$ by Corollary \ref{Cor:rhh}.
\end{proof}


\section{Automorphisms of property (T) groups acting on hyperbolic spaces}\label{Sec:App}



\subsection{Auxiliary groups}\label{Sec:AuHG}


The main goal of this section is to construct examples of groups with certain special properties, which will be used in the proofs of Theorems \ref{app1} and \ref{app2}. We begin with a variant of the famous Rips construction \cite{Rips}. The modification considered here is similar to those suggested in \cite{BO,OW}.

\begin{lem}\label{Lem:Rips}
Let $Q$ be a countable group, $H$ a non-cyclic, torsion-free, hyperbolic group. There exists a short exact sequence $1\to N\to G\to Q\to 1$ such that the following conditions hold.
\begin{enumerate}
\item[(a)] $G$ is torsion-free.
\item[(b)] $N$ is an infinite quotient of $H$.
\item[(c)] There are elements $a,b\in N$ such that $\langle a\rangle\cap \langle b\rangle=\{ 1\}$ and $\{ \langle a\rangle, \langle b\rangle \} \h G$. In particular, $G$ is acylindrically hyperbolic.
\item[(d)] If, in addition, $Q$ is finitely presented, then $G$ is hyperbolic relative to $\{ \langle a\rangle, \langle b\rangle \}$; in particular, $G$ is a hyperbolic group in this case.
\end{enumerate}
\end{lem}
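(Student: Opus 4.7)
My plan is to perform a modified Rips-style construction inside the free product $\tilde G := H \ast F$, where $F$ is free on a generating set of $Q$, and then pass to a small cancellation quotient of $\tilde G$ that satisfies (a)--(d) by feeding into the machinery of Section~3. Fix a presentation $Q = \langle \{q_i\}_{i\in I} \mid \{r_j\}_{j\in J}\rangle$ (countable, finite when $Q$ is finitely presented) and a finite generating set $Y$ of $H$. My first step is to locate non-commensurable infinite-order elements $a, b \in H$ with $E(a) = \langle a\rangle$, $E(b) = \langle b\rangle$, $\langle a\rangle \cap \langle b\rangle = \{1\}$, and $\{\langle a\rangle, \langle b\rangle\} \h H$: Lemma~\ref{non-comm} (applied to $S = H$, which is suitable because $H$ is torsion-free and acts non-elementarily acylindrically on its Cayley graph by Theorem~\ref{Thm:AH}(b)) combined with Theorem~\ref{Thm:Eg} produces such $a, b$, and Proposition~\ref{Prop:maln} with torsion-freeness forces the intersection to be trivial. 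Using the standard fact that $H \h (\tilde G, \{q_i^{\pm 1}\}_{i\in I})$ for a free product, transitivity (Proposition~\ref{trans}) yields $\{\langle a\rangle, \langle b\rangle\} \h (\tilde G, X)$ with $X := Y \cup \{q_i^{\pm 1}\}_{i\in I}$; after enlarging $X$ via Theorem~\ref{Thm:AH}(a) the action of $\tilde G$ on $\Gamma(\tilde G, \mathcal A)$ is acylindrical, and $S := \langle a, b\rangle$ becomes $\mathcal A$-suitable by Lemma~\ref{Lem:suit2}.

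I then construct Rips-type relators. For each $i \in I$, each $h \in Y \cup Y^{-1}$, and each sign $\pm$, I select a word $W^\pm_{i,h}$ in $\langle a\rangle \sqcup \langle b\rangle$; similarly, for each $j \in J$ a word $W_j$. Each tail is an alternating product of very large powers of $a$ and $b$, with exponents drawn from pairwise disjoint arithmetic progressions (one per relator) as in Lemma~\ref{Lem:sc}, so that no letter of $\mathcal H$ is reused and the full collection of tails satisfies the $W(\xi, \sigma)$ condition of Definition~\ref{Defn:WMlr} for any prescribed $\xi, \sigma$. I put
\[
\mathcal R \;=\; \{q_i^{-1} h q_i \cdot W^+_{i,h}\} \,\cup\, \{q_i h q_i^{-1} \cdot W^-_{i,h}\} \,\cup\, \{r_j \cdot W_j\}
\]
and set $G := \tilde G / \ll \mathcal R \rr$, with $\gamma\colon \tilde G \to G$ the quotient map.

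The principal obstacle will be to upgrade $W(\xi, \sigma)$ on the tails to the $C_1(\e, \mu, \rho)$ condition on all of $\mathcal R$ that is required by Lemma~\ref{Lem:Hull}, Proposition~\ref{Prop:GL}, and the proof of Theorem~\ref{glue}. Lemma~\ref{W->C}(b) establishes this implication when the $X$-prefix is a single letter, whereas my prefixes have length $3$ (first two families) and $|r_j|$ (third). However, the proof of Lemma~\ref{W->C}(b), built on Olshanskii's generalized polygon lemma (Lemma~\ref{N123}) together with the $2$-attracting property of $\mathcal H$-paths (Proposition~\ref{scW}), extends essentially verbatim to bounded multi-letter $X$-prefixes provided $\rho$ exceeds $\max\{3,\, |r_j|\}$; for each $j$ this is arranged by taking the parameter $\ell$ in Lemma~\ref{Lem:sc} large enough. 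That each relator is $(\tilde G, \mathcal A)$-geodesic then follows from Lemma~\ref{Wgeod}, applied with $U_1$ equal to the $X$-prefix and the remaining $U_k$ empty.

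Granted $C_1(\e, \mu, \rho)$, the conclusions fall out of the small cancellation toolkit. The first two families of $\mathcal R$ force $\gamma(q_i^{\pm 1}) \gamma(H) \gamma(q_i^{\mp 1}) \subseteq \gamma(H)$, so $N := \gamma(H)$ is normal in $G$; reducing the third family modulo $N$ yields $G/N \cong \langle \{q_i\}_{i \in I} \mid \{r_j\}_{j \in J}\rangle = Q$, giving the short exact sequence and making $N$ a quotient of $H$. Proposition~\ref{Prop:GL} gives injectivity of $\gamma|_H$ (no non-trivial $(G,\mathcal A)$-geodesic word in $Y^{\pm 1}$ can share a long common subword with any cyclic shift of a relator), hence $N$ is infinite and $\langle\gamma(a)\rangle \cap \langle\gamma(b)\rangle = \{1\}$ in $G$, proving (b). The hyperbolic embedding $\{\langle\gamma(a)\rangle, \langle\gamma(b)\rangle\} \h G$ in (c) follows either from Theorem~\ref{glue}(b) or, in the infinite-$\mathcal R$ regime, from the same multi-letter extension applied to Lemma~\ref{SCQ}. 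For (a), Lemma~\ref{Lem:Hull}(c) shows every finite-order element of $G$ lifts to one in $\tilde G = H \ast F$, which is torsion-free by Kurosh's theorem. Finally, if $Q$ is finitely presented then $\mathcal R$ is finite, and $\tilde G$ is hyperbolic relative to $\{\langle a\rangle, \langle b\rangle\}$ by Proposition~\ref{rhhe} and Corollary~\ref{Cor:rhrh}; Lemma~\ref{Lem:Hull}(b) then makes $G$ hyperbolic relative to $\{\langle a\rangle, \langle b\rangle\}$, and Corollary~\ref{Cor:rhh} upgrades this to hyperbolicity of $G$, yielding (d).
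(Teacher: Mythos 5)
Your general strategy is the right one — a Rips-style construction inside $H\ast F$, killed down by small cancellation relators with $\langle a\rangle$-$\langle b\rangle$ tails — and you correctly identify the relevant toolkit (hyperbolically embedded cyclic subgroups in $H$, transitivity, Theorem~\ref{Thm:AH}(a), the small cancellation sections). But there is a genuine gap at exactly the step you flag as the ``principal obstacle,'' and your proposed fix does not close it.

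You keep the Rips prefixes $q_i^{-1}hq_i$, $q_ihq_i^{-1}$, and $r_j$ as words in $X$ of length $\ge 3$ and assert that Lemma~\ref{W->C}(b) extends ``essentially verbatim.'' This is not verbatim. Two concrete issues: (i) to apply Lemma~\ref{Wgeod} and conclude that the full relator is $(G,\mathcal A)$-geodesic, you need the prefix $U_1$ to be a shortest $\mathcal A$-word in its double coset $\{1\}\,U_1\,H_{i(1)}$; for a single letter $x_j$ this is automatic, but for $q_i^{-1}hq_i$ it is a non-trivial claim about the metric on $\Gamma(H\ast F,\mathcal A)$ and you never verify it; (ii) in the Olshanskii polygon argument underlying Lemma~\ref{W->C}(b), a longer prefix does more than change a constant — after cyclic shifting and inversion a relator can carry several $X$-letters, possibly split into two blocks, so the step ``the word $\lab(p)$ does not contain any such letters'' needs to be redone, with a correspondingly longer ``small-side'' list $B$ and revised bounds. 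None of this is fatal in principle, but it is not verbatim either and amounts to re-proving the lemma. The paper sidesteps the whole issue cleanly: it does not write the Rips prefixes as multi-letter words at all. It puts each product $y^{-1}xy$, $yxy^{-1}$, $r_j$ into the (infinite) relative generating set as a single letter — this is legitimate because Theorem~\ref{Thm:AH}(a) allows enlarging $X$ while preserving the hyperbolic embedding and acylindricity — and then applies Theorem~\ref{glue} as a black box, which was proved precisely for single-letter prefixes. If you adopt that device, the rest of your argument goes through with no modified lemmas.

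A secondary issue: your justification that ``Proposition~\ref{Prop:GL} gives injectivity of $\gamma|_H$ because no non-trivial $(G,\mathcal A)$-geodesic word in $Y^{\pm 1}$ can share a long common subword with a relator'' does not work as stated. A word over $Y^{\pm 1}$ representing an element of $H$ is generally \emph{not} $(G,\mathcal A)$-geodesic once $\mathcal A$ contains $\langle a\rangle\sqcup\langle b\rangle$ as single letters (e.g.\ $a$ itself has $\mathcal A$-length $1$ but potentially large $Y$-length), so Proposition~\ref{Prop:GL} does not apply directly to it, and you would first have to rewrite the word in $\mathcal A$-geodesic form — at which point $\mathcal H$-letters appear and the ``no common letters'' contradiction evaporates. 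The correct, and weaker, statement you actually need is that $N$ is infinite; this follows from Theorem~\ref{glue}(a) (equivalently Lemma~\ref{Lem:Hull}(a)), which gives injectivity of $\gamma$ on $\langle a\rangle$, which sits inside $H$ and hence inside $N$. This is in fact what the paper uses, and it also furnishes $\langle\gamma(a)\rangle\cap\langle\gamma(b)\rangle=\{1\}$ for (c) without any appeal to the Greendlinger lemma.
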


\begin{proof}
Suppose that $Q$ is given by a (possibly infinite) presentation $\langle Y \mid \mathcal R\rangle$. The free product $P=H\ast F(Y)$, where $F(Y)$ is the free group with basis $Y$, is hyperbolic relative to $\{ H, F(Y)\}$. Since $H$ is non-cyclic, torsion-free, and hyperbolic, it contains two non-commensurable elements $a, b$ such that $E(a)=\langle a\rangle $ and $E(b)=\langle b\rangle $ (see, for example, \cite[Lemma 3.8]{Ols93}). Applying Corollary \ref{Cor:EgRH}, we obtain that $H$ is hyperbolic relative to $\{ \langle a\rangle, \langle b\rangle\}$. By Corollary \ref{Cor:rhrh}, the group $P$ is hyperbolic relative to $\{ \langle a\rangle, \langle b\rangle, F(Y)\}$.

Since $a$ and $b$ are non-commensurable, we have $\langle a\rangle \cap \langle b\rangle =\{ 1\}$. Therefore, $H$ is a suitable subgroup of $P$ with respect to $\{ \langle a\rangle, \langle b\rangle, F(Y)\}$ by Lemma \ref{Lem:suit2} and Lemma \ref{Lem:seq}.

Let $X$ be a finite generating set of $H$ and let $$\mathcal F=\{ y^{-1}xy, yxy^{-1}\mid x\in X, y\in Y\}\cup \mathcal R.$$ Note that $\{ \langle a\rangle, \langle b\rangle\}\h (H,X)$ by Proposition \ref{rhhe}. Therefore, $\{ \langle a\rangle, \langle b\rangle, F(Y)\}\h (P,X)$ by Proposition \ref{trans}. This allows us to apply Theorem \ref{glue} and conclude that, for every $f\in \mathcal F$, there exists $s_f\in H$ such that the quotient group
\begin{equation}\label{Eq:G0}
G=P/\ll fs_f \mid f\in \mathcal F\rr
\end{equation}
is torsion-free, the restriction of the natural homomorphism $P\to G$ to each of the subgroups $\langle a\rangle$, $\langle b\rangle$, $F(Y)$ is injective, and the (isomorphic) images of these subgroups are hyperbolically embedded in $G$. In particular, $G$ is acylindrically hyperbolic by Theorem \ref{Thm:ah}. Let $N$ be the image of $H$ in $G$. The choice of the set $\mathcal F$ ensures that  $N\lhd G$. Since $N$ contains the (isomorphic) image of $\langle a\rangle$, it is infinite.

Finally, if $Q$ is finitely presented, we can assume that $Y$ and $\mathcal R$ are finite. This implies that $|\mathcal F|<\infty$. Thus, part (e) of Theorem \ref{glue} applies and $G$ is hyperbolic relative to the isomorphic images of the subgroups $\langle a\rangle$, $\langle b\rangle$, $F(Y)$. Since each of these subgroups is hyperbolic in this case (recall that $|Y|<\infty$), $G$ is a hyperbolic group by Corollary \ref{Cor:rhh}.
\end{proof}

We are now ready to prove the main result of this section. Note that any pair of groups $N\lhd G$ satisfying conditions (d) and (e) (see below) has the following rigidity property: every automorphism $\phi$ of $N$ preserves the conjugacy classes of $H_1$, $H_2$ in $G$. It follows that $\phi$ maps $x^2$ and $y^3$ to their conjugates (or possibly conjugates of their inverses) in $G$. This property can be used to control the outer automorphism group of appropriate quotients of $N$.

\begin{lem}\label{Thm:HypAut}
Let $Q$ be countable group, $H$ a non-cyclic, torsion-free, hyperbolic group. There exists a short exact sequence $$1\to N\to G\to Q\to 1$$ and subgroups
\begin{equation}\label{Eq:H1H2}
H_1=\langle s,x \mid s^2, \, [s,x]\rangle \cong \ZZ_2\times \ZZ\;\;\; {\rm and}\;\;\; H_2=\langle t,y \mid t^3, \, [t,y]\rangle \cong\ZZ_3\times \ZZ
\end{equation}
of $N$ satisfying the following conditions.
\begin{enumerate}
\item[(a)] We have $H_1\cap H_2=\{ 1\}$ and $\{H_1, H_2\}\h G$. In particular, $G$ is acylindrically hyperbolic.
\item[(b)] $N$ is a quotient of $H$.
\item[(c)] $N$ is generated by $\{ x^2, y^3\}$.
\item[(d)] $H_1=C_G(s)$ and $H_2=C_G(t)$.
\item[(e)] Every finite order element of $N$ is conjugate in $G$ to one of the elements $1, s, t, t^2$.
\item[(f)] If $Q$ is finitely presented, then $G$ is hyperbolic relative to $\{ H_1, H_2\}$; in particular, $G$ is a hyperbolic group in this case.
\end{enumerate}
\end{lem}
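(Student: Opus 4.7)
The plan is to apply Lemma~\ref{Lem:Rips} to produce an initial group $G_0$, amalgamate $H_1$ and $H_2$ onto the cyclic hyperbolically embedded subgroups of $G_0$ to obtain an intermediate group $G_1$ carrying the required peripheral structure, and then invoke Theorem~\ref{glue} with a carefully chosen suitable subgroup to impose small cancellation relations that force $N$ to be generated by $\{x^2,y^3\}$ while preserving all other structural features.

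Concretely, I first apply Lemma~\ref{Lem:Rips} to $Q$ and $H$ to obtain a short exact sequence $1\to N_0\to G_0\to Q\to 1$ with $G_0$ torsion-free, $N_0$ an infinite quotient of $H$, and non-commensurable infinite-order elements $a,b\in N_0$ satisfying $\langle a\rangle\cap\langle b\rangle=\{1\}$ and $\{\langle a\rangle,\langle b\rangle\}\h G_0$ (with $G_0$ hyperbolic relative to $\{\langle a\rangle,\langle b\rangle\}$ when $Q$ is finitely presented). I then form the amalgamated free product
$$G_1 \;=\; G_0 \ast_{a=x^2} H_1 \ast_{b=y^3} H_2,$$
where the cyclic subgroups $\langle a\rangle=\langle x^2\rangle$ and $\langle b\rangle=\langle y^3\rangle$ have index $4$ and $9$ in $H_1$ and $H_2$, respectively. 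A standard combination theorem for relatively hyperbolic groups (finitely presented case) and the analogous extension result for hyperbolically embedded subgroups under finite-index enlargement of cyclic peripheral subgroups (general case) yield $\{H_1,H_2\}\h G_1$, with acylindrical action on the associated Cayley graph (after possibly enlarging the relative generating set via Theorem~\ref{Thm:AH}(a)), and relative hyperbolicity of $G_1$ with respect to $\{H_1,H_2\}$ when $Q$ is finitely presented. A Bass--Serre tree argument then gives $H_1\cap H_2=\{1\}$, $C_{G_1}(s)=H_1$, $C_{G_1}(t)=H_2$, and the fact that every torsion element of $G_1$ is conjugate to an element of $H_1\cup H_2$, whose torsion is precisely $\{1,s,t,t^2\}$.

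I next extend the projection $G_0\twoheadrightarrow Q$ to $\pi\colon G_1\twoheadrightarrow Q$ by sending $H_1\cup H_2$ to $1$, and set $N_1:=\ker\pi$, which is generated by $N_0\cup H_1\cup H_2$. I then apply Theorem~\ref{glue} to $G_1$ with peripheral collection $\{H_1,H_2\}$, suitable subgroup $S=\langle a,b\rangle$ (which is $\mathcal{A}$-suitable by Lemma~\ref{Lem:suit2} since $H_1\cap H_2=\{1\}$ and $S\cap H_1\supseteq\langle a\rangle$, $S\cap H_2\supseteq\langle b\rangle$ are both infinite), some $N\ge 3$ that captures the alphabet-lengths of all relevant elements, and a finite subset $\mathcal{F}$ of $N_1$ consisting of $s,x,t,y$ together with a finite generating set of $N_0$ with $a,b$ removed. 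The theorem produces elements $s_f\in S=\langle x^2,y^3\rangle$ such that, in the quotient
$$G \;:=\; G_1\big/\ll fs_f \mid f\in\mathcal{F}\rr,$$
each $f\in\mathcal{F}$ becomes equal to $s_f^{-1}\in\langle x^2,y^3\rangle$; hence the image of $N_1$ lies in $\langle x^2,y^3\rangle$, the reverse inclusion is trivial, and (c) follows. Condition (b) then follows because $N$ is the image of $N_0$, which is itself a quotient of $H$. Conditions (a) and the equality $H_1\cap H_2=\{1\}$ inside $G$ follow from Theorem~\ref{glue}(b) combined with injectivity on the ball $B_N$; (d) follows from Theorem~\ref{glue}(d) and the centralizer computation $C_{G_1}(s)=H_1$ (and the analogue for $t$); (e) follows from Theorem~\ref{glue}(c) and the torsion classification in $G_1$; and (f) follows from Theorem~\ref{glue}(e) together with Corollary~\ref{Cor:rhh}, since $\mathcal{F}$ and the relative generating set can be taken finite when $Q$ is finitely presented.

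The main technical obstacle is the amalgamation step: verifying $\{H_1,H_2\}\h G_1$ together with the precise control over centralizers, torsion, and intersections in $G_1$ requires combining a combination theorem for hyperbolically embedded (or relatively hyperbolic) subgroups along finite-index extensions of cyclic peripheral subgroups with standard Bass--Serre theory. Once $G_1$ is secured, the rest is a direct application of Theorem~\ref{glue}, the key observation being that choosing $S=\langle a,b\rangle=\langle x^2,y^3\rangle$ automatically places each correcting element $s_f$ inside $\langle x^2,y^3\rangle$, which is precisely what forces $N$ to coincide with $\langle x^2,y^3\rangle$.
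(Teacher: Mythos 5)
The proposal takes a genuinely different route and has a real gap in a place where the paper's approach was carefully designed to avoid one.

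The paper forms the \emph{free} product $P = G_0 \ast H_1 \ast H_2$, which is trivially hyperbolic relative to $\{G_0, H_1, H_2\}$, and then applies Theorem~\ref{glue} \emph{twice}: first with suitable subgroup $N_0$ and $\mathcal F$ a finite generating set of $H_1\ast H_2$, which glues $H_1$ and $H_2$ into the normal subgroup while preserving exactness over $Q$; then a second time with $S=\langle x^2,y^3\rangle$ and $\mathcal F$ a generating set of $N_1$, which forces condition~(c). You instead merge everything in one step by forming the amalgam $G_1 = G_0 \ast_{a=x^2} H_1 \ast_{b=y^3} H_2$ and then applying Theorem~\ref{glue} once.

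The central gap is the assertion that $\{H_1,H_2\}\h G_1$. When $Q$ is finitely presented, $G_0$ is hyperbolic relative to $\{\langle a\rangle,\langle b\rangle\}$ and Dahmani's combination theorem does apply. But the paper deliberately does \emph{not} assume finite presentability (Theorem~\ref{app2} requires arbitrary countable $Q$), and in that case you only have $\{\langle a\rangle,\langle b\rangle\}\h G_0$, not relative hyperbolicity. There is no \emph{``analogous extension result for hyperbolically embedded subgroups under finite-index enlargement of cyclic peripheral subgroups''} available in the paper's toolkit: Proposition~\ref{trans} only allows \emph{descending} to subgroups of the $H_i$, not enlarging them across an amalgamation. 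You are invoking a combination theorem that the paper neither proves nor cites, and as far as I am aware such a statement is not an off-the-shelf result. The paper's free product plus first gluing step exists precisely to sidestep this obstacle: after the gluing, $H_1,H_2$ sit inside $N_0$ with their hyperbolic embedding guaranteed by Theorem~\ref{glue}(b), and no new combination theorem is needed.

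A second, smaller, problem: Theorem~\ref{glue} requires $\mathcal F\subseteq X$, the relative generating set. After your amalgamation, $s,x\in H_1$ and $t,y\in H_2$ are peripheral letters, not elements of $X$; placing them in $\mathcal F$ needs you to enlarge $X$ to contain these letters without destroying $\{H_1,H_2\}\h(G_1,X)$, and the proof of Lemma~\ref{W->C}(b) relies on the $\mathcal F$-letters being non-peripheral with respect to the alphabet driving the small cancellation. This is potentially fixable but needs an argument you do not give. The paper instead chooses $\mathcal F$ to be a finite generating set of $H_1\ast H_2$ consisting of products like $sy$, $st$, which lie \emph{outside} $H_1\cup H_2$, so none of these issues arise.
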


\begin{proof}
Let $1\to N_0\to G_0\to Q\to 1$ be the short exact sequence provided by Lemma \ref{Lem:Rips}. That is, $G_0$ is torsion-free and acylindrically hyperbolic, $N_0$ is an infinite quotient of $H$, and $G_0$ is hyperbolic whenever $Q$ is finitely presented.

The group $$P=G_0\ast H_1\ast H_2$$ is hyperbolic relative to $\{ G_0, H_1, H_2\}$. Using the standard facts about free products and the structure of $H_1$, $H_2$, it is easy to show that the following conditions hold:
\begin{enumerate}
\item[(+)]$H_1=C_{P}(s)$, $H_2=C_{P}(t)$;
\item[(++)] \emph{every finite order element in $P$ is conjugate to $1$, $s$, $t$, or $t^2$}.
\end{enumerate}

By Theorem \ref{Thm:ah}, there exists a generating set $\mathcal A_0$ of $G_0$ such that the Cayley graph $\Gamma (G_0, \mathcal A_0) $ is hyperbolic, and the action of $G_0$ on $\Gamma (G_0, \mathcal A_0) $ is non-elementary and acylindrical. Since $G_0$ is torsion free, it contains no non-trivial finite subgroups. Therefore, $N_0$ is $\mathcal A_0$-suitable in $G_0$ by Lemma \ref{Lem:suit1}.

Let $a, b\in N_0$ be the elements provided by Lemma \ref{Lem:Rips}. Thus, we have $\{E(a), E(b)\} \h (G_0, \mathcal A_0)$. Since $\{ G_0, H_1, H_2\}\h P$, we obtain that $\{ E(a), E(b), H_1, H_2\}\h (P, Y)$ for some $Y\subseteq P$ by Theorem \ref{trans}. Let $\mathcal B=Y\sqcup E(a)\sqcup E(b)\sqcup H_1\sqcup H_2$. By Theorem \ref{Thm:AH} (a), we can assume that the action of $P$ on $\Gamma (P, \mathcal B)$ is acylindrical. By Lemma \ref{Lem:suit2}, we conclude that $N_0$ is an $\mathcal B$-suitable subgroup of $P$.

Let $s_1, \ldots, s_k\in N_0$ be the elements provided by Theorem \ref{glue} applied to the group $P$, the hyperbolically embedded collection of subgroups $\{ E(a), E(b), H_1, H_2\}\h (P, Y)$, the $\mathcal B$-suitable subgroup $N_0$, and a finite generating set $\mathcal F=\{ f_1, \ldots, f_k\}$ of $H_1\ast H_2$. Let
\begin{equation}\label{Eq:G1P}
G_1= P/\ll f_1s_1, \ldots, f_ks_k\rr
\end{equation}
We keep the notation $E(a)$, $E(b)$, $H_1$, $H_2$ for the isomorphic images of these subgroups in $G_1$ and denote the image of $N_0$ in $G_1$ by $N_1$. By Theorem \ref{glue}, we have $\{ E(a), E(b), H_1, H_2\}\h (G_1, X_1)$  for some $X_1\subseteq G$. We let
$$
\mathcal A_1=X_1\sqcup E(a)\sqcup E(b)\sqcup H_1\sqcup H_2.
$$
As above, we can assume that the action of $G_1$ on $\Gamma (G_1, \mathcal A_1)$ is acylindrical by part (a) of Theorem~\ref{Thm:AH}. Since $s_1, \ldots, s_k\in N_0$, we have
\begin{equation}\label{H1H2N0}
H_1, H_2\le N_1.
\end{equation}
In particular, the restriction of the natural homomorphism $\gamma_0\colon P\to G_1$ to $G_0$ is surjective. Furthermore, we have $\Ker \gamma_0\le \ll N_0\cup H_1\cup H_2\rr$ (see (\ref{Eq:G1P})) and $\gamma_0(\ll N_0\cup H_1\cup H_2\rr)=N_1$. Therefore,
$$
G_1/N_1 \cong P /\ll N_0 \cup H_1\cup H_2\rr \cong G_0/N_0\cong Q.
$$

By Lemma \ref{Lem:suit2}, the subgroup $S=\langle x^2, y^3\rangle \le G_1$ is $\mathcal A_1$-suitable in $G_1$. Note that $N_1$ is finitely generated being a quotient of $H$. We can apply Theorem \ref{glue} to the group $G_1$ with the collection of hyperbolically embedded subgroups $\{ E(a), E(b), H_1, H_2\}$, the suitable subgroup $S$, and a finite set $\{ g_1, \ldots, g_\ell\} $ of generators of $N_1$. Let $t_1, \ldots, t_\ell\in S$ be the elements provided by the theorem and let
$$
G=G_1/\ll g_1t_1, \ldots, g_\ell t_\ell\rr.
$$
Let $\gamma $ denote the natural homomorphism $G_1\to G$ and let $N=\gamma(N_1)$. Since $g_1, \ldots, g_\ell\in N_1$ and $t_1, \ldots, t_\ell\in S\le N_1$, we have $\Ker(\gamma)\le N_1$. Therefore, $G/N\cong G_1/N_1\cong Q$. Our construction can be summarized by the following commutative diagram, where all rows are exact and all vertical maps are surjective
$$
\begin{tikzcd}
&H\ar[d]&&&\\
1\ar[r]& N_0\ar[r]\ar[d] & G_0 \ar[r]\ar[d,"\gamma_0"] & Q\ar[r]\ar[d, equal]&1 \\
1\ar[r]& N_1\ar[r]\ar[d] & G_1 \ar[r]\ar[d,"\gamma"] & Q\ar[r]\ar[d, equal]&1  \\
1\ar[r]& N\ar[r]& G \ar[r] & Q\ar[r]&1 .
\end{tikzcd}
$$

We claim that the short exact sequence at the bottom satisfies the conclusion of the theorem. Indeed, the groups $H_1$, $H_2$ embed in $G$ since the restriction of $\gamma$ to $H_1\cup H_2$ is injective by Theorem \ref{glue} (a). Moreover, we have $H_1,H_2\le N$ by (\ref{H1H2N0}). Further, parts (a), (d), and (e) follow from the corresponding parts of Theorem \ref{glue} and (+), (++). Part (b) is obvious from the construction. Part (c) is ensured by passing from $G_1$ to $G$.

Finally, part (f) can be proved as follows. If $Q$ is finitely presented, $G_0$ is hyperbolic relative to $\{ E(a), E(b)\}$ by Lemma \ref{Lem:Rips} and $P$ is hyperbolic relative to $\{ E(a), E(b), H_1, H_2\}$ by Corollary \ref{Cor:rhrh}. In particular, $\{ E(a), E(b), H_1, H_2\}\h (P,Y)$ for some finite $Y$ and the action of $P$ on $\Gamma (P, \mathcal B)$ is acylindrical by Proposition \ref{rhhe}.
Thus part (e) of Theorem \ref{glue} applies and we obtain that $G_1$ is hyperbolic relative to (isomorphic image of the) $\{ E(a), E(b), H_1, H_2\}$. Similarly, choosing finite $X_1$ allows us to conclude that $G$ is hyperbolic relative to $\{ E(a), E(b), H_1, H_2\}$. Since each of the subgroups in this collection is hyperbolic, so is $G$ by Corollary \ref{Cor:rhh}.
\end{proof}

\subsection{Proofs of Theorems \ref{app1} and \ref{app2}}

We begin by generalizing some observations used in \cite{BO,OW}.

\begin{defn}\label{Def:IndOut}
To each short exact sequence of groups
\begin{equation}\label{Eq:SES}
1\to N\to G\stackrel{\e}\to Q\to 1,
\end{equation}
we associate homomorphisms
$$
\iota\colon G\to Aut(N)\;\;\; {\rm and}\;\;\; \vk\colon Q\to Out(N)
$$
as follows. For every $g\in G$, $\iota(g)$ is the automorphism of $N$ given by $n\mapsto gng^{-1}$. Further, given an element $q\in Q$, let $g$ be any preimage of $q$ under $\e$. We define
\begin{equation} \label{Eq:vk}
\vk(q)=\iota (g) Inn (N).
\end{equation}
\end{defn}

Note that the map $\vk$ is well-defined. Indeed the right side of (\ref{Eq:vk}) is independent of the choice of a particular preimage $g$.

\begin{lem}\label{Lem:Q->Out}
In the notation of Definition \ref{Def:IndOut}, suppose that $G$ is acylindrically hyperbolic and $K(G)=\{1\}$. If $N$ is non-trivial, then the maps $\iota: G\to Aut(N)$ and $\vk\colon Q\to Out(N)$ associated to the short exact sequence (\ref{Eq:SES}) are injective.
\end{lem}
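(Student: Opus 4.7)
Both injectivity statements reduce to the single assertion $C_G(N)=\{1\}$. Granted this, $\iota$ is injective since $\ker(\iota)=C_G(N)$. For $\vk$, if $\vk(q)=1$ and $g$ is any preimage of $q$ under $\e$, then $\iota(g)\in Inn(N)$, so $\iota(g)=\iota(n)$ for some $n\in N$; hence $\iota(gn^{-1})=1$, giving $gn^{-1}\in C_G(N)=\{1\}$, so that $g=n\in N$ and $q=\e(g)=1$.

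To establish $C_G(N)=\{1\}$, I first observe that $N$ must be infinite: a non-trivial finite normal subgroup of $G$ would lie in $K(G)=\{1\}$. Applying Theorem \ref{Thm:ah}, I fix a generating set $\mathcal A$ of $G$ such that $\G$ is hyperbolic and the action of $G$ on it is non-elementary and acylindrical. By Lemma \ref{Lem:suit1} the subgroup $N$ is $\mathcal A$-suitable, and Lemma \ref{non-comm} then yields two non-commensurable loxodromic elements $a,b\in N$ with $E(a)=\langle a\rangle$ and $E(b)=\langle b\rangle$. Theorem \ref{Thm:Eg}, applied with the empty hyperbolically embedded collection, gives $\{\langle a\rangle,\langle b\rangle\}\h(G,X)$ for some $X\subseteq G$.

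The heart of the argument is the centralizer calculation. For any $g\in C_G(a)$, conjugation by $g$ fixes $a$, so $\langle a\rangle\le\langle a\rangle\cap g^{-1}\langle a\rangle g$; since $\langle a\rangle$ is infinite, Proposition \ref{Prop:maln}(a) forces $g\in\langle a\rangle$, whence $C_G(a)\le\langle a\rangle$. Symmetrically $C_G(b)\le\langle b\rangle$, so
$$
C_G(N)\;\le\;C_G(a)\cap C_G(b)\;\le\;\langle a\rangle\cap\langle b\rangle.
$$
By Proposition \ref{Prop:maln}(b) the intersection on the right is finite, and since $\langle a\rangle$ is torsion-free it must be trivial. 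Hence $C_G(N)=\{1\}$, completing the reduction. The one place that could conceivably require more care is the production of the pair $a,b$ with the required algebraic properties, but this is handled cleanly by the chain Theorem \ref{Thm:ah} $\to$ Lemma \ref{Lem:suit1} $\to$ Lemma \ref{non-comm}, coupled with Theorem \ref{Thm:Eg}; no small-cancellation input is needed here.
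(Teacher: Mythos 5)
Your proposal is correct and follows essentially the same route as the paper's proof: both use the chain Theorem \ref{Thm:ah} $\to$ Lemma \ref{Lem:suit1} $\to$ Lemma \ref{non-comm} $\to$ Theorem \ref{Thm:Eg} to obtain $a,b\in N$ with $\{\langle a\rangle,\langle b\rangle\}\h G$, and then invoke Proposition \ref{Prop:maln} to force $\Ker(\iota)\le\langle a\rangle\cap\langle b\rangle=\{1\}$, with injectivity of $\vk$ then reduced to that of $\iota$. Your write-up merely spells out the intermediate centralizer computations that the paper leaves implicit.
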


\begin{proof}
By Theorem \ref{Thm:ah}, there exists a generating set $X$ of $G$ such that $\Gamma (G,X)$ is hyperbolic and the action of $G$ on $\Gamma(G,X)$ is non-elementary and acylindrical. By Lemma \ref{Lem:suit1}, $N$ is an $X$-suitable subgroup of $G$. By Lemma \ref{non-comm} and Theorem \ref{Thm:Eg}, there exist elements $a, b\in N$ of infinite order such that $\{\langle a\rangle , \langle b\rangle\}\h G$.

Suppose that $g\in \Ker (\iota)$. Then $g$ commutes with both $a$ and $b$. By Proposition \ref{Prop:maln}, we have $g\in \langle a\rangle\cap \langle b\rangle=\{ 1\}$. Thus, $\iota$ is injective.

Further, suppose that $q\in \Ker(\vk)$. This means that $\iota (g)$ is an inner automorphism of $N$ for some preimage $g$ of $q$ under $\e$. That is, there is $h\in N$ such that $\iota(g)=\iota(h)$. By injectivity of $\iota$, we have $g=h$. This implies that $q=\e(g)=\e(h)=1$.
\end{proof}

We are now ready to prove our main result. Theorems \ref{app1} and \ref{app2} can be easily derived from this result as explained below.

\begin{thm}\label{main}
For every countable group $Q$ and every non-cyclic torsion-free hyperbolic group $H$, there exists an ICC quotient group $B$ of $H$ with the following properties.
\begin{enumerate}
\item[(a)] $Out(B)\cong Q$.

\item[(b)] The groups $B$ and $Aut(B)$ are acylindrically hyperbolic. Moreover, if $Q$ is finitely presented, then $Aut(B)$ is hyperbolic.
\end{enumerate}
\end{thm}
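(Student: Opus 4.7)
The plan is to use Lemma \ref{Thm:HypAut} as the starting point and then perform an iterative small-cancellation quotient of $G$ via Theorem \ref{glue}, forcing every automorphism of the resulting group $B$ to arise from conjugation by an element of an overgroup $\bar G$. Applying Lemma \ref{Thm:HypAut} to $Q$ and $H$ yields a short exact sequence $1 \to N \to G \to Q \to 1$ together with hyperbolically embedded subgroups $H_1 = \langle s, x\rangle \cong \mathbb Z_2 \times \mathbb Z$ and $H_2 = \langle t, y\rangle \cong \mathbb Z_3 \times \mathbb Z$ in $G$, satisfying $H_1 \cap H_2 = 1$, $N = \langle x^2, y^3\rangle$, $C_G(s) = H_1$, $C_G(t) = H_2$, and every finite-order element of $N$ $G$-conjugate to one of $1, s, t, t^2$; moreover $G$ is hyperbolic when $Q$ is finitely presented. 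The key rigidity underlying the construction is that, for any further quotient $\bar G = G/\ll\mathcal S\rr$ to which Theorem \ref{glue} applies (preserving $\{H_1, H_2\} \h \bar G$, the centralizers of $s,t$ via part (d), and the finite-order classification via part (c)), any $\phi \in Aut(B)$ with $B = \bar N$ sends $s \mapsto \bar g s \bar g^{-1}$ and $t \mapsto \bar h t^{\pm 1} \bar h^{-1}$ for some $\bar g, \bar h \in \bar G$; since $Aut(H_i)$ is finite of order $4$ and $\phi$ induces an automorphism of $H_1$ (respectively $H_2$) after twisting by $Inn(\bar g^{-1})$ (respectively $Inn(\bar h^{-1})$), one obtains $\phi(x^2) = \bar g x^{\pm 2} \bar g^{-1}$ and $\phi(y^3) = \bar h y^{\pm 3} \bar h^{-1}$, so modulo composition with $Inn_{\bar G}$, the class of $\phi$ is determined by the coset $\bar g^{-1} \bar h \in H_1 \backslash \bar G / H_2$ together with a pair of signs.

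The main construction is to enumerate all ``bad'' candidate tuples $(\bar g, \bar h, \epsilon_1, \epsilon_2)$ -- those not corresponding to the conjugation case $(\bar g, \bar g, +, +)$ -- and to introduce one relation $r_n \in N$ per candidate. Each $r_n$ I would take in the form provided by Lemma \ref{Lem:sc}, namely a product of alternating large powers of $x$ and $y$, with exponents and conjugating prefixes tuned so that the set $\mathcal S = \{r_n\}$ satisfies the $W(\xi, \sigma)$ condition of Definition \ref{Defn:WMlr} for parameters large enough to invoke Theorem \ref{glue} and Proposition \ref{Prop:GL}, and so that the image of $r_n$ under the pseudo-conjugation prescribed by the $n$-th bad candidate, when rewritten as a word in the alphabet $H_1 \sqcup H_2$, contains no long subword of any cyclic shift of any $r_m^{\pm 1}$. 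By Proposition \ref{Prop:GL}, this image is then non-trivial in $B = N/\ll\mathcal S\rr$, ruling out the candidate as a well-defined homomorphism.

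Setting $\bar G = G/\ll\mathcal S\rr$, Theorem \ref{glue} yields $\{H_1, H_2\} \h \bar G$ (hence $\bar G$ acylindrically hyperbolic), preservation of the centralizer equalities and finite-order classification, and exactness of $1 \to B \to \bar G \to Q \to 1$ (since $\mathcal S \subseteq N$). Corollary \ref{Cor:ICC} applied with $S = \bar G$ gives $\bar G$ ICC so $K(\bar G) = 1$; applied with $S = B$ (using $H_1, H_2 \subseteq B$), it gives $B$ ICC. Lemma \ref{Lem:Q->Out} provides injectivity of $\vk \colon Q \to Out(B)$, while the design of $\mathcal S$ ensures surjectivity, so $Out(B) \cong Q$. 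Moreover, $\{\langle x\rangle, \langle y\rangle\} \h \bar G$ by Proposition \ref{trans}, and Proposition \ref{Prop:maln} gives $C_{\bar G}(x) = H_1$ and $C_{\bar G}(y) = H_2$, so $C_{\bar G}(B) \subseteq H_1 \cap H_2 = 1$. Hence $\bar G \hookrightarrow Aut(B)$ and, combined with the surjectivity of $\vk$, one obtains $Aut(B) \cong \bar G$, which is acylindrically hyperbolic. When $Q$ is finitely presented, a finer analysis of the rigidity allows $\mathcal S$ to be chosen finite, so Theorem \ref{glue}(e) together with Lemma \ref{Thm:HypAut}(f) gives $Aut(B) \cong \bar G$ hyperbolic.

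The principal obstacle is the coordinated construction of $\mathcal S$: each $r_n$ must simultaneously destroy its assigned bad candidate via the Greendlinger-type Proposition \ref{Prop:GL} and contribute consistently to the global $W(\xi, \sigma)$ condition on $\mathcal S$. A secondary subtlety, relevant in the finitely-presented case, is showing that the rigidity forces the number of bad-candidate classes modulo $Inn_{\bar G}$ to be finite, which allows the finite choice of $\mathcal S$ and the stronger hyperbolicity conclusion for $Aut(B)$.
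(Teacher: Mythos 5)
Your setup matches the paper's: start from Lemma \ref{Thm:HypAut}, pass to a small-cancellation quotient $\overline G$ of $G$ supported on $N$, use the torsion classification and the centralizer equalities to force any $\phi\in Aut(B)$ to fix $s$ and send $t\mapsto zt^{\pm 1}z^{-1}$ up to inner automorphisms, and then use the $W(\xi,\sigma)$ machinery together with Proposition~\ref{Prop:GL} to kill bad $\phi$. However, the crucial step --- the construction of the set of relators --- is where your plan diverges and has a genuine gap.

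You propose to enumerate ``bad candidate'' tuples $(\bar g,\bar h,\epsilon_1,\epsilon_2)$ and to introduce one relator $r_n$ per candidate. This runs into a circularity you do not resolve: the candidates are indexed by elements and double cosets of $\bar G=G/\ll\mathcal S\rr$, but $\mathcal S$ is the very thing being constructed from the enumeration. Breaking this circularity would require a delicate iterative construction (with each new relator potentially creating new candidates in the smaller quotient) that you do not describe, and which would face serious coherence issues with the global $W(\xi,\sigma)$ condition. The paper avoids this entirely with a sharper idea that is the real content of the proof: it imposes a \emph{single} relator $R=a^{\ell+1}b^{\ell+1}\cdots a^{2\ell}b^{2\ell}$ (with $a=x^2$, $b=y^3$) and runs the Greendlinger argument in the reverse direction -- for any automorphism $\phi$, the word $R'$ obtained by substituting $\phi(a)=a^{\alpha}$ and $\phi(b)=zb^{\beta}z^{-1}$ is first shown to be $(G,\mathcal A)$-geodesic via Lemma~\ref{Wgeod} (a step your proposal omits, yet it is necessary to invoke Proposition~\ref{Prop:GL}); since $R'=1$ in $\overline G$, Proposition~\ref{Prop:GL} forces $R'$ to share a long subword with a cyclic shift of $R^{\pm 1}$, and a careful count of ``regular'' letters then shows $U$ is empty and $\alpha=\beta=1$. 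One relation handles all candidates at once.

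Your ``secondary subtlety'' claim that a finer rigidity analysis would let $\mathcal S$ be chosen \emph{finite} when $Q$ is finitely presented is not justified and is the wrong fix: there is no reason for the set of double cosets $H_1\backslash\bar G/H_2$, and hence the set of candidates, to be finite in an infinite hyperbolic group. Your infinite-relator construction would therefore block the conclusion that $\bar G$ is hyperbolic in the finitely-presented case, since Theorem~\ref{glue}(e) requires a finite relator set. The paper's single-relation construction makes the hyperbolicity of $\overline G$ in the finitely-presented case follow directly from Lemma~\ref{Thm:HypAut}(f), Lemma~\ref{Lem:Hull}(b), and Corollary~\ref{Cor:rhh}, with no extra work.
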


\begin{proof}
Let $Q$ be a countable group. We fix an exact sequence $1\to N\to G\to Q\to 1$ satisfying conditions (a)--(f) from Lemma \ref{Thm:HypAut}. In particular, $N$ is generated by the elements
\begin{equation}\label{Eq:ab}
a=x^{2}\;\;\; {\rm and } \;\;\; b=y^{3}.
\end{equation}
By Lemma \ref{Thm:HypAut} (a), we have $\{ H_1, H_2\}\h (G,X)$ for some $X\subseteq G$. We keep the standard notation
$$\mathcal A=X\sqcup H_1\sqcup H_2.$$ Let $D$ be the constant provided by Lemma \ref{Omega} applied to the hyperbolically embedded collection $\{H_1, H_2\}\h G$.

Since $C_1(\e, \mu, \rho)$ implies $C(\e, \mu, \rho)$ and both conditions become stronger as $\e$ and $\rho $ increase and $\mu$ decreases, we can choose $\e$ and $\rho$ such that the conclusions of Lemmas \ref{Prop:HO} and \ref{Lem:Hull} simultaneously hold for $N=1$. Further, let $\xi$ and $\sigma $ be the constants provided by Lemma \ref{W->C} for the chosen values of $\e$, $\mu$, and $\rho$. Since $W(\xi, \sigma)$ becomes stronger as $\xi$ and $\sigma$ increase, we can additionally assume (after increasing $\xi$ and $\sigma$ if necessary) that
\begin{equation}\label{Eq:xi>5D}
\xi > 5D.
\end{equation}
and the conclusion of Proposition \ref{Prop:GL} holds for the group $G$, the collection of peripheral subgroups $\{ H_1, H_2\}$, the subset $X\subseteq G$, and $\nu = 1/3$.

Further, by Lemma \ref{Lem:sc}, we can choose $\ell>5$ such that the word
$$
R=a^{\ell+1}b^{\ell+1}a^{\ell+2}b^{\ell+2}\ldots a^{2\ell}b^{2\ell},
$$
where $a$ and $b$ are defined by (\ref{Eq:ab}), satisfies $W(\xi, \sigma)$.

Let
$$
\overline{G}=G/\ll R\rr
$$
and let $B$ be the image of $N$ in $\overline{G}$. Clearly, $B\lhd \overline{G}$ and $\overline{G}/B\cong G/N\cong Q$. By the choice of the parameters, we can apply Lemma \ref{Lem:Hull} to the quotient group $\overline{G}$. Thus, the restriction of the natural homomorphism $G\to \overline G$ to $H_1\cup H_2$ is injective. Henceforth, we identify groups $H_1$, $H_2$ with their (isomorphic) images in $\overline G$ and keep the same notation $s$, $t$, $x$, $y$, $a$, $b$ for their elements. By part (b) of Lemma \ref{Lem:Hull}, we have $\{ H_1, H_2\}\h \overline{G}$. In particular, $\overline G$ is acylindrically hyperbolic.  Since $H_1\cap H_2 =\{ 1\}$ in $G$, we obtain that the images of $H_1$ and $H_2$ intersect trivially in $\overline{G}$. Note that these images actually belong to $B$ since $H_1, H_2\le N$. By Corollary \ref{Cor:ICC}, this implies that $B$ is ICC. Further, by Lemma \ref{Prop:HO} (applied to $N=1$) and part (d) of Lemma \ref{Thm:HypAut}, we have
\begin{equation}\label{CB(s)}
H_1=C_{\overline G}(s)\;\;\; {\rm and}\;\;\; H_2=C_{\overline G}(t)=C_{\overline G}(t^{-1}).
\end{equation}
Finally, every element of finite order in $\overline G$ is conjugate to one of the elements $1$, $s$, $t$, $t^{-1}$ By Lemma \ref{Thm:HypAut} (e) and Lemma \ref{Lem:Hull} (c).

Our next goal is to show that every automorphism of the group $B$ is induced by conjugation in $\overline G$. We fix any
$\phi\in Aut(B)$. Using the description of finite order elements of $\overline G$, we conclude that $\phi$ maps $s$ and $t$ to some conjugates of $s$ and $t^{\pm 1}$ in $\overline G$, respectively. Composing $\phi $ with an inner automorphism of $\overline G$, we can assume that $\phi (s)=s$ and $\phi (t)=zt^{\pm 1}z^{-1}$ for some $z\in \overline G$. Combining this with (\ref{CB(s)}), we obtain $\phi (H_1)=H_1$ and $\phi (H_2)=zH_2z^{-1}$. This easily implies that $x$ is mapped to $x^{\pm 1}$ or $x^{\pm 1}s$ and $y$ is mapped to $y^{\pm 1}$, $y^{\pm 1}t$, or $y^{\pm 1}t^2$ (see (\ref{Eq:H1H2}). In all these cases, we have
$$
\phi (a)=a^{\alpha}\;\;\; {\rm and} \;\;\; \phi (b)=zb^{\beta}z^{-1},
$$
where $\alpha, \beta =\pm 1$.

Note that if $H_1gH_2 =H_1zH_2$ for some $g\in H$, then $z=h_1gh_2$, where $h_1\in H_1$, $h_2\in H_2$. Hence, the pair $(a, zbz^{-1})$ is conjugate to the pair $(a, gbg^{-1})$ by the element $h_1$. Thus, composing $\phi$ with another inner automorphism if necessary, we can assume that $z$ is a shortest element in the double coset $H_1zH_2$. It follows that $z^{-1}$ is a shortest element in the double coset $H_2z^{-1}H_1$.

Let $U$ be a shortest word in the alphabet $\mathcal A$ representing $z$. Consider the word
\begin{equation}\label{Eq:Pi'}
R^\prime= a^{(\ell+1)\alpha}Ub^{(\ell+1)\beta}U^{-1}a^{(\ell+2)\alpha}Ub^{(\ell+2)\beta}U^{-1}\ldots a^{2\ell\alpha}Ub^{2\ell\beta}U^{-1}.
\end{equation}
Here, as usual, we think of powers of $a$ and $b$ as single letters. Clearly $R^\prime$ represents $\phi (R)=1$ in $\overline G$. Thanks to (\ref{Eq:xi>5D}) and the assumption about $z$ explained in the previous paragraph, we can apply Lemma \ref{Wgeod} to the word $R^\prime$ and conclude that $R^\prime $ is $(G,\mathcal A)$-geodesic. By Proposition \ref{Prop:GL}, $R^\prime $ and a cyclic shift of $R^{\pm 1}$ have a common subword $S$ of length at least $(1-\nu) \| R\|=2\| R\|/3$. We will show that this can happen only if $U$ is the empty word.

Note that the letters corresponding to powers of $a$ and $b$ can occur inside the subwords $U^{\pm 1}$ of the word $R^\prime $; the occurrences of these letters that are not inside the subwords $U^{\pm 1}$ will be called \emph{regular}. Suppose first that $S$ contains at most one regular letter. Note that this letter must be the first or the last one in $S$. Indeed, otherwise $S$ would have common subwords with both $U$ and $U^{-1}$ and hence would contain letters corresponding to both positive and negative powers of $a$ or $b$, which is impossible for a subword of $R^{\pm 1}$. Thus $S$ shares a subword $S^\prime$ of length al least $\|S\|-1\ge 2\| R\|/3 -1$ with a single occurrence of the subword $U^{\pm 1}$ in $R^\prime$. Since $\ell >5$, we have $\| R\|>10$ and $2\| R\|/3 -1 > \| R\|/2$. Thus $U=U_1S^\prime U_2$ and there is a cyclic shift of $R$ of the form $S^\prime S^{\prime\prime}$ such that $\| S^{\prime\prime}\| <\| S^\prime\|$.  The word $U_1(S^{\prime\prime})^{-1}U_2$ is shorter than $U$ and represents the same element of the group $\overline G$ as $U$; this contradicts the assumption that $U$ is the shortest word representing the element $z$.

Further, assume that $S$ contains at least $2$ consecutive regular letters. Note that every letter from $H_1\sqcup H_2$ occurs in the words $R$, $R^{-1}$ at most once and no letter occurs in both. This easily implies that $S$ is a subword of $R$ (not $R^{-1}$) and these letters must be next to each other in $R$. Therefore, $\alpha =\beta =1$, $U$ is the empty word, and $z=1$ in this case.

Thus, $\phi (a)=a$ and $\phi (b)=b$ up to conjugation in $\overline G$. Being a quotient of $N$, the group $B$ is generated by $a$ and $b$ (see Lemma \ref{Thm:HypAut} (c)). Therefore, every automorphism of $B$ is induced by a conjugation in $\overline G$. Combining this with Lemma \ref{Lem:Q->Out}, we obtain that $Aut(B)\cong \overline G$ and $Out(B)\cong \overline G/B$. In particular, $Aut(B)$ is acylindrically hyperbolic and $Out(B)\cong Q$.

Finally, if $Q$ is finitely presented, then $G$ is hyperbolic relative to $\{ H_1, H_2\}$ by part (f) of Lemma \ref{Thm:HypAut}. This allows us to apply the second claim in part (b) of Lemma \ref{Lem:Hull} and conclude that $\overline G$ is hyperbolic relative to (the isomorphic images of) $H_1$, $H_2$. Since $H_1, H_2$ are hyperbolic groups, $\overline G$ is hyperbolic by Corollary \ref{Cor:rhh}.
\end{proof}

\begin{proof}[Proof of Theorem \ref{app1}]
Let $H$ be a nontrivial, torsion-free, hyperbolic group with property (T). It is worth noting that such groups are abound; for example, every property (T) group is a quotient of a torsion-free, hyperbolic, property (T) group  by a result of Cornulier \cite{Cor05}. Let also $Q$ be a finite group. By Theorem \ref{main}, there exists an ICC quotient group $G$ of $H$ such that $Aut(G)$ is hyperbolic and $Out(G)\cong Q$. Since $G$ is ICC, we have $G\cong Inn(G)$. Since $Q$ is finite, $Inn(G)$ has finite index in $Aut(G)$. It is well-known that a finite index subgroup of a hyperbolic group is hyperbolic. Thus, $G$ is hyperbolic. It also has property (T) being a quotient of $H$.
\end{proof}

Finally, we state and prove a more precise version of Theorem \ref{app2}.

\begin{thm}\label{app3}
For every countable group $Q$, there exists a finitely generated acylindrically hyperbolic group $G$ with property (T) and trivial finite radical such that $Out(G)\cong Q$.
\end{thm}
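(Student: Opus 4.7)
My plan is to derive Theorem \ref{app3} directly from Theorem \ref{main} by choosing the source hyperbolic group $H$ to carry property (T). Specifically, I will fix a non-cyclic, torsion-free, hyperbolic group $H$ with property (T); as remarked in the proof of Theorem \ref{app1}, the existence of such a group follows from Cornulier's theorem \cite{Cor05} stating that every property (T) group is a quotient of a torsion-free hyperbolic group with property (T) (applied, e.g., to any non-cyclic property (T) group, so that one can arrange $H$ itself to be non-cyclic). Given an arbitrary countable group $Q$, I then apply Theorem \ref{main} to this pair $(H,Q)$ to produce an ICC quotient $B$ of $H$ such that $Out(B)\cong Q$ and $B$ is acylindrically hyperbolic.

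It remains to verify that $G:=B$ has all of the required properties. First, $B$ is finitely generated because it is a quotient of the finitely generated group $H$. Second, property (T) passes to quotients, so $B$ inherits property (T) from $H$. Third, $B$ is acylindrically hyperbolic by part (b) of Theorem \ref{main}. Fourth, $Out(B)\cong Q$ by part (a) of Theorem \ref{main}. The only remaining point is the triviality of the finite radical $K(B)$, and this is where Theorem \ref{Thm:HypICC} enters: since $B$ is ICC and acylindrically hyperbolic, Theorem \ref{Thm:HypICC} gives $K(B)=\{1\}$.

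I do not foresee any genuine obstacle: all the heavy lifting (constructing $B$ with the prescribed outer automorphism group, controlling the centralizers and finite-order elements, and obtaining acylindrical hyperbolicity) is already contained in Theorem \ref{main}, while the passage from ICC to $K(B)=\{1\}$ is immediate from Theorem \ref{Thm:HypICC}. The only mild subtlety is ensuring the initial group $H$ can be chosen non-cyclic and torsion-free with property (T), but this is precisely the content of the Cornulier construction invoked in the proof of Theorem \ref{app1}. Thus the whole argument reduces to a few lines assembling these ingredients.
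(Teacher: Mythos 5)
Your proposal is correct and follows essentially the same route as the paper: apply Theorem~\ref{main} to a torsion-free hyperbolic group $H$ with property (T), then read off property (T), $Out\cong Q$, acylindrical hyperbolicity of $B$, and (via ICC and Theorem~\ref{Thm:HypICC}) triviality of $K(B)$. The only cosmetic difference is that you cite part (b) of Theorem~\ref{main} directly for the acylindrical hyperbolicity of $B$, while the paper (somewhat redundantly) re-derives it from the acylindrical hyperbolicity of $Aut(B)$ using $B\cong Inn(B)\lhd Aut(B)$ and Lemma~\ref{Lem:NormAH}; also, your detour through Cornulier's theorem to ensure $H$ is non-cyclic is unnecessary, since any non-trivial torsion-free group with property (T) is automatically non-cyclic.
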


\begin{proof}
As above, we apply Theorem \ref{main} to a torsion-free, hyperbolic group $H$ with property (T) and a countable group $Q$. Let $G$ be the group provided by the theorem. It has property (T) being a quotient of $H$. Since $G$ is ICC, it has trivial finite radical and we have $Inn(G)\cong G$. The latter equality implies that $G$ is isomorphic to a normal subgroup of $Aut(G)$. The group $Aut(G)$ is acylindrically hyperbolic by part (b) of Theorem \ref{main}. Therefore, $G$ is also acylindrically hyperbolic by Lemma \ref{Lem:NormAH}.
\end{proof}

\addcontentsline{toc}{section}{References}

\end{document}